\let\oldtheorem\newtheorem
\RenewDocumentCommand{\newtheorem}{s m o m O{}}{%
\IfBooleanTF{#1}%
{\oldtheorem{#2}{#4}}%
{\IfNoValueTF{#3}{\oldtheorem{#2}{#4}[#5]}%
{\newaliascnt{#2}{#3}%
\oldtheorem{#2}[#2]{#4}%
\aliascntresetthe{#2}}}}
\newtheorem{theorem}{Theorem}
\newtheorem{lemma}[theorem]{Lemma} 
\newtheorem{proposition}[theorem]{Proposition} 
\newtheorem{remark}[theorem]{Remark}
\newtheorem{corollary}[theorem]{Corollary}
\newtheorem{definition}[theorem]{Definition}
\newcommand{\rrd}{\mathbf{R}^{d}}
\newcommand{\nn}[1]{\mathbf{N}^{#1}}
\newcommand{\rr}[1]{\mathbf{R}^{#1}}
\newcommand{\cc}[1]{\mathbf{C}^{#1}}
\newcommand{\zz}[1]{\mathbf{Z}^{#1}}
\newcommand{\zzp}[1]{\mathbf{Z}_+^{#1}}
\newcommand{\rd}{\rr d}
\newcommand{\rdd}{\rr {2d}}
\newcommand{\scal}[2]{\langle #1,#2\rangle}
\newcommand{\abs}[1]{| #1|}     
\newcommand{\cdo}{\, \cdot \, }
\newcommand{\aff}{\operatorname{a}}
\newcommand{\bb}[1]{\mathbb{#1}}
\newcommand{\cF}{\mathscr{F}}
\newcommand{\conv}[0]{*}
\newcommand{\norm}[2]{\|#1\|_{#2}}
\newcommand{\F}[0]{\mathscr{F}}
\newcommand{\FL}[1]{\F L^{#1}}
\newcommand{\chF}[1]{\chi_{#1}}
\newcommand{\chEF}[2]{\chF{#1}^{#2}}
\numberwithin{equation}{section}
\begin{document}

\title{Time-Frequency Analysis for Neural Networks}
\author{\name Ahmed Abdeljawad\email ahmed.abdeljawad@oeaw.ac.at \\
	\addr Johann Radon Institute of Computational and Applied Mathematics (RICAM)\\
	\addr Austrian Academy of Sciences\\
	\addr Altenberger Straße 69, A-4040 Linz, Austria\\
	\AND
	\name Elena Cordero  \email elena.cordero@unito.it \\
	\addr Dipartimento di Matematica\\
	\addr Università degli Studi di Torino\\
	\addr via Carlo Alberto 10, 10123 Torino, Italy
}
\editor{}

\maketitle
\begin{abstract}
We develop a quantitative approximation theory for shallow neural networks using tools from time-frequency analysis. Working in weighted modulation spaces $M^{p,q}_m(\rr{d})$, we prove dimension-independent approximation rates in Sobolev norms $W^{n,r}(\Omega)$ for networks whose units combine standard activations with localized time-frequency windows. Our main result shows that for $f \in M^{p,q}_m(\rr{d})$ one can achieve
\[
\|f - f_N\|_{W^{n,r}(\Omega)} \lesssim N^{-1/2}\,\|f\|_{M^{p,q}_m(\rr{d})},
\]
on bounded domains, with explicit control of all constants. We further obtain global approximation theorems on $\rr{d}$ using weighted modulation dictionaries, and derive consequences for Feichtinger’s algebra,  Fourier-Lebesgue spaces, and Barron spaces. Numerical experiments in one and two dimensions confirm that modulation-based networks achieve substantially better Sobolev approximation than standard ReLU networks, consistent with the theoretical estimates.
\end{abstract}

\begin{keywords}
	{Approximation Rate, Neural Network, Modulation Spaces, Short-Time Fourier Transform, Barron Space, Curse of Dimensionality}\\
\noindent\textbf{2020 MSC:} 
41A25, 41A46, 41A30, 41A65, 46E35, 68T07, 62M45, 68T05. 
\end{keywords}

\section{Introduction}

Neural networks have established themselves as a central tool in modern machine learning, driving breakthroughs in fields ranging from computer vision and natural language processing to scientific computing and control. Their empirical success is often attributed to a combination of high expressive power, scalability in high dimensions, and the availability of efficient training algorithms. At the same time, it has prompted a growing effort to understand these models from a mathematical point of view. Classical universal approximation theorems guarantee that neural networks with a single hidden layer (also known as shallow neural networks) can approximate to arbitrary accuracy a wide class of continuous functions on compact domains  \cite{Cybenko89ApproximationSuperpositionsSigmoidal}, as well as other function spaces \cite{Abdeljawad24WeightedApproximationBarron, Abdeljawad23SpaceTimeApproximationShallow, Siegel20ApproximationRatesNeural, Siegel23CharacterizationVariationSpaces, Klusowski18ApproximationCombinationsReLU}. In other words, the class of functions generated by such networks is dense in many natural function spaces.

\emph{Qualitative} expressivity results provide valuable insights into the ability of neural networks to approximate highly complex functions~\cite{Abdeljawad2025UniformApproximationQuadratic, Abdeljawad22ApproximationsDeepNeural, Caragea23NeuralNetworkApproximation, Marwah23Neuralnetworkapproximations}, including those arising as solutions to partial differential equations (PDEs)~\cite{Chen23RegularityTheoryStatic, Chaudhari2018DeepRelaxationPartial, Grohs23ProofTheorytoPracticeGap, Kutyniok2022TheoreticalAnalysisDeep, Lagaris1998ArtificialNeuralNetworks, Raissi2019PhysicsinformedNeuralNetworks, Chen2018NeuralOrdinaryDifferential, E2018DeepRitzMethod}. 

Beyond these qualitative insights, a substantial body of theoretical work has contributed to quantifying how the network complexity scales with the target accuracy, the input dimension, and the regularity of the target function~\cite{Yarotsky2017ErrorBoundsApproximations, Petersen2018OptimalApproximationPiecewise, Elbrachter2021DeepNeuralNetwork, Suzuki2019AdaptivityDeepReLU}. Nevertheless, many of the existing results are derived for specific classes of functions, architectures, or norms, and do not fully account for the structural and analytical properties typical of PDE problems. This leaves several important questions open regarding the efficiency and scalability of neural network-based solvers, particularly in relation to solution regularity, dimensionality, and architectural design.

Much of this quantitative theory, however, has been developed for standard regression or data-fitting problems, where the primary performance metrics are based on $L^p$ norms and pointwise prediction error.
Such an $L^p$-centric viewpoint is not fully aligned with the requirements of the burgeoning field of scientific computing, particularly for the numerical solution of PDEs. In this context, the approximant must faithfully capture \emph{both} the target function $f$ and its derivatives $\partial^\alpha f$ up to a given order $n \in \zzp{}$. The latter requirement naturally shifts the focus from Lebesgue-type error measurements to error measures in Sobolev norms
\[
  \|f - f_N\|_{W^{n,r}(\Omega)} 
  = \Bigg( \sum_{|\alpha|\le n} \|\partial^\alpha f - \partial^\alpha f_N\|_{L^r(\Omega)}^r \Bigg)^{1/r},
\]
for $r \ge 2$ and bounded domains $\Omega \subset \rr{d}$, which are closely aligned with the analytical structure of variational formulations.

From a theoretical perspective, one of the main obstacles in developing such quantitative approximation results is the well-known \emph{curse of dimensionality}: for generic function classes on $\rr{d}$, the number of parameters required to obtain a prescribed accuracy $\varepsilon > 0$ often scales like $\varepsilon^{-\mathcal{O}(d)}$ as $d$ grows. A productive way to circumvent this has been to restrict attention to more structured function classes. A prominent example is the \emph{Barron space} introduced in the seminal work of Barron \cite{Barron93UniversalApproximationBounds}, which characterizes functions by the finiteness of a certain spectral moment of their Fourier transform. In this setting, shallow neural networks can achieve \emph{dimension-independent} approximation rates of order $\mathcal{O}(N^{-1/2})$ in $L^2$, as refined in \cite{Yang2025OptimalRatesApproximation, DeVore2025WeightedVariationSpaces,E2022BarronSpaceFlowInduced,Siegel22SharpBoundsApproximation, Siegel24SharpBoundsApproximation, Siegel23CharacterizationVariationSpaces, Voigtlaender22SamplingNumbersFourierAnalytic}.
This explicitly links neural network training to dictionary learning and greedy approximation theory, drawing on classical results from DeVore~\cite{DeVore98NonlinearApproximation} and Cohen et al.~\cite{Cohen22OptimalStableNonlinear} regarding nonlinear approximation with redundant dictionaries. 

Our aim in this work is to extend this quantitative perspective to a phase-space framework based on \emph{modulation spaces} and to error measures in high-order Sobolev norms.

However, despite the success of Barron-type spaces, several important gaps remain:
\begin{enumerate}
  \item Most existing results are formulated in $L^2$ (or $L^p$) norms and do not directly address Sobolev norms $W^{n,r}(\Omega)$ that are more natural for PDE applications.
  \item The Fourier-only viewpoint underlying spectral Barron spaces is not well-suited to capturing functions with nontrivial \emph{time-frequency localization}, i.e., functions whose behavior is constrained in both space and frequency.
  \item Approximation results on unbounded domains $\rr{d}$ are comparatively scarce, especially in settings where both the function and its derivatives are controlled.
\end{enumerate}

These issues motivate the search for a more flexible analytical framework that can simultaneously:
(i) encode phase-space information (space and frequency),
(ii) capture decay and regularity in a unified way, and
(iii) support dimension-independent approximation estimates in high-order Sobolev norms, with explicit control of the dependence of the constants on the problem parameters.

To address these challenges, we work in the setting of \emph{modulation spaces} $M^{p,q}_m(\rr{d})$, introduced by Feichtinger \cite{Feichtinger03ModulationSpacesLocally} and treated in depth in \cite{Grochenig01FoundationsTimeFrequencyAnalysis}. Roughly speaking, modulation spaces measure the size and distribution of the \emph{short-time Fourier transform} (STFT)
\[
  V_\varphi f(x,\xi)
  = \int_{\rr{d}} f(t)\,\overline{\varphi(t-x)}\,e^{-2\pi i\, t\cdot\xi}\,\mathrm{d}t,
\]
where $\varphi$ is a fixed nonzero window function in the Schwartz class $\mathscr{S}(\rr{d})$. 
For a weight $m:\rr{d}\times\rr{d}\to(0,\infty)$ and exponents $0 < p,q \le \infty$, the modulation norm is given by
\[
  \|f\|_{M^{p,q}_m(\rr{d})}
  = \big\|  m \,V_\varphi f\big\|_{L^{p,q}(\rr{d}\times\rr{d})}.
\]
This norm imposes a specific geometric structure on the phase space. As visualized in Figure~\ref{fig:tiling}, this norm induces a uniform phase-space tiling, contrasting with the dyadic decompositions of Besov spaces. While dyadic grids widen at high scales to localize singularities, the STFT maintains constant frequency bandwidth, making it superior for capturing high-frequency oscillations.
\begin{figure}[htbp]
    \centering
    \definecolor{modBlue}{RGB}{70, 130, 180}   
    \definecolor{besovOr}{RGB}{210, 105, 30}   

    \begin{tikzpicture}[scale=0.7, >=Latex]
        \draw[->, thick] (-0.2, 0) -- (4.5, 0) node[right] {$x$};
        \draw[->, thick] (0, -0.2) -- (0, 7.5) node[above] {$\xi$};

        \foreach \x in {0, 1, ..., 3} {
            \foreach \y in {0, 1, ..., 6} {
                \draw[fill=modBlue!10, draw=modBlue!40, thin] (\x, \y) rectangle ++(1, 1);
            }
        }

        \draw[fill=modBlue!40, draw=modBlue, thick] (1, 3) rectangle (2, 4);
        \node[align=center] at (2,-1.25) {Modulation Space ($M^{p,q}$) \\ Uniform Decomposition \\ $\Delta \xi = \text{const}$};
    \end{tikzpicture}
    \hspace{1cm}
    %
    \begin{tikzpicture}[scale=0.7, >=Latex]
        \draw[->, thick] (-0.2, 0) -- (4.5, 0) node[right] {$x$};
        \draw[->, thick] (0, -0.2) -- (0, 7.5) node[above] {$\xi$};

        \foreach \x in {0, 1, ..., 3} {
            \draw[fill=besovOr!10, draw=besovOr!40, thin] (\x, 0) rectangle ++(1, 1);
        }

        \foreach \x in {0, 0.5, ..., 3.5} {
            \draw[fill=besovOr!10, draw=besovOr!40, thin] (\x, 1) rectangle ++(0.5, 2);
        }

        \foreach \x in {0, 0.25, ..., 3.75} {
            \draw[fill=besovOr!10, draw=besovOr!40, thin] (\x, 3) rectangle ++(0.25, 4);
        }

        \draw[decorate, decoration={calligraphic brace, amplitude=4pt}, thick, gray] (-0.2, 0) -- (-0.2, 1);
        \node[anchor=east, font=\tiny, text=gray] at (-0.3, 0.5) {$\Delta \xi=1$};

        \draw[decorate, decoration={calligraphic brace, amplitude=4pt}, thick, gray] (-0.2, 1) -- (-0.2, 3);
        \node[anchor=east, font=\tiny, text=gray] at (-0.3, 2) {$\Delta \xi=2$};

        \draw[decorate, decoration={calligraphic brace, amplitude=4pt}, thick, gray] (-0.2, 3) -- (-0.2, 7);
        \node[anchor=east, font=\tiny, text=gray] at (-0.3, 5) {$\Delta \xi=4$};

        \draw[fill=besovOr!40, draw=besovOr, thick] (2, 0) rectangle (3, 1); 
        \draw[fill=besovOr!40, draw=besovOr, thick] (2.5, 3) rectangle (2.75, 7); 

        \node[align=center] at (2,-1.25) {Besov Space ($B^s_{p,q}$) \\ Dyadic Decomposition \\ $\Delta \xi \sim 2^j, \Delta x \sim 2^{-j}$};
    \end{tikzpicture}
    \caption{Visualizing the tiling of the time-frequency plane. Left: Modulation spaces use a uniform grid. Right: Besov spaces use a dyadic grid where the frequency bandwidth doubles at each scale ($1 \to 2 \to 4$).}
    \label{fig:tiling}
\end{figure}
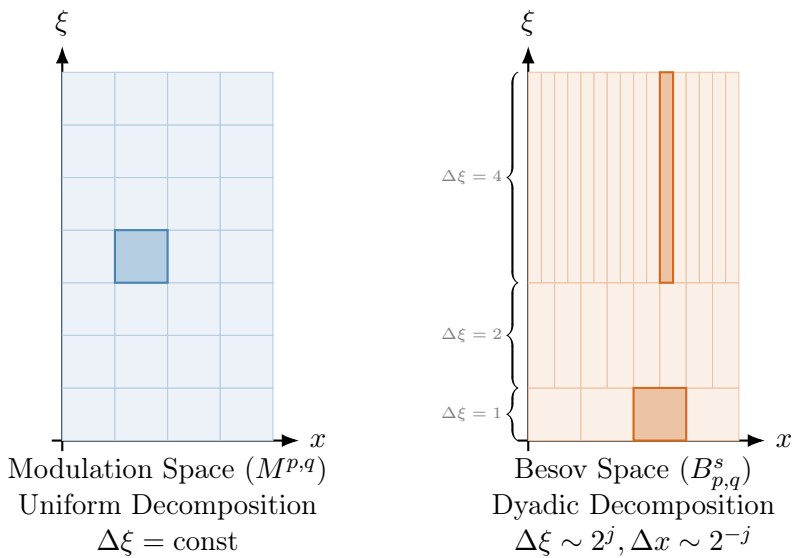
Within this framework, different choices of $m$, $p$, and $q$ give rise to a rich scale of function spaces. In particular:
\begin{itemize}
  \item The \emph{Feichtinger algebra} $M^1$ is obtained for $p=q=1$ and a suitable polynomial weight, and it is closely related to spectral Barron spaces \cite{Liao23SpectralBarronSpace}.
  \item Weighted Fourier-Lebesgue spaces $\mathscr{F}L^q_{v_s}$ arise as modulation spaces with weights depending only on the frequency variable $\xi$.
  \item Various classical function spaces, including Shubin-Sobolev, Bessel potential, Besov, and Sobolev spaces, can be embedded into weighted modulation spaces via appropriate choices of weight and integrability parameters; see \cite{Benyi20ModulationSpacesApplications, Kobayashi11InclusionRelationSobolev, Guo17InclusionRelationsModulation}.
\end{itemize}

This phase-space perspective offers a unified formalism that simultaneously characterizes spatial decay, frequency decay, and regularity. From the perspective of neural network approximation, modulation spaces are particularly attractive because they admit natural atomic decompositions into localized building blocks such as Gabor atoms~\cite{FEICHTINGER1989ATOMICCHARACTERIZATIONSMODULATION}. In this work, we exploit this structure by introducing a dictionary $\bb{D}$ of \emph{windowed activation functions}, consisting of terms of the form
\begin{equation}\label{eq:act_dic_intro}
      x \mapsto \sigma\Big(\tfrac{\eta\cdot x}{\tau}+b\Big)
  \,\varphi\Big(\tfrac{\eta\cdot x}{\tau}+b-t\Big)\,
  \phi(x-y),
\end{equation}
where $\sigma$ is a standard activation function e.g., $ReLU$, $\varphi\in\mathscr{S}(\rr{})$ and $\phi\in\mathscr{S}(\rr{d})$ are window functions, and $(y,\eta,b)$ parameterize the spatial, frequency, and bias components. This construction retains the flexibility of neural activations while introducing explicit phase-space localization.

\subsection{Main Contributions}

We develop a unified approximation theory for shallow neural networks acting on weighted modulation spaces and measured in high-order Sobolev norms. Throughout, $d\in\nn{}$ denotes the ambient dimension, $\Omega\subset\rr{d}$ is a bounded domain. The error is measured in a Sobolev norm $W^{n,r}(\Omega)$ with exponent $r\ge 2$ and regularity of order $n\in\zzp{}$.

\paragraph{1. Local Sobolev Approximation in Modulation Spaces.}
Our first main result (Theorem~\ref{thm:approximation_sobolev_space_local}) shows that for any
\[
  f \in M^{p,q}_m(\rr{d}), 
  \qquad 0 < p < \infty, \quad 0 < q \le 2 \le r,
\]
with a weight $m(x, \xi) =  (1 + |x|^2)^{s_1/2}  (1 + |\xi|^2)^{s_2/2}$ satisfying suitable conditions on $s_1$ and $s_2$, 
there exists a constant $C>0$ such that
\[
  \inf
  \| f - f_N \|_{W^{n,r}(\Omega)}
  \le C\, N^{-1/2}\, |\Omega|^{1/r}\, \|f\|_{M^{p,q}_m(\rr{d})},
\]
for all $N\in\nn{}$, where the infimum is taken over all shallow networks \( f_N \) with \( N \) neurons whose activation functions are of the form given in \eqref{eq:act_dic_intro}; see \Cref{sec:prelim:VariationSpace} for details on the structure of such networks. The resulting approximation rate is dimension-independent, and the proof yields explicit control of the constant $C$.

\paragraph{2. Unified Consequences for Feichtinger, Shubin, and Fourier-Lebesgue Spaces.}
Specializing the weight and exponents yields a series of concrete corollaries. For $p=q=1$, Theorem~\ref{thm:approximation_sobolev_space_local} recovers a local Sobolev approximation result for the weighted Feichtinger algebra $M^1_m$ (Corollary~\ref{cor:Feichtinger_approximation_sobolev_space_local}). Furthermore, we obtain local Sobolev approximation bounds in Shubin-Sobolev spaces $Q^{s}$ and in classical weighted spaces $L^2_{v_{s}}$ and $\mathscr{F}L^2_{v_{s}}$ for suitable choices of $s$ (Corollary~\ref{cor:Hilbert_approximation_sobolev_space_local}), which can be viewed as a quantitative formulation of the uncertainty principle.
Using the local equivalence between modulation and weighted Fourier-Lebesgue spaces, we further obtain a local approximation result in $\mathscr{F}L^q_{v_{s}}$ (Proposition~\ref{prop:supported_approximation_sobolev_space_local}).

\paragraph{3. Sobolev Approximation in Barron Spaces.}
A particularly important case for the machine-learning community is that of Barron spaces. For $p=1$ and an appropriate frequency weight, Corollary~\ref{cor:Barron} yields a Barron-space approximation result of the form
\[
  \inf
  \| f - f_N \|_{W^{n,r}(\Omega)}
  \le C\, N^{-1/2}\, |\Omega+\Omega|\, \|f\|_{B_{v_{n+1}}},
\]
 with a simplified bound when $\Omega$ is convex, where the infimum is taken over all shallow networks \( f_N \) with \( N \) neurons activated by functions of the form given in \eqref{eq:act_dic_intro}. This extends the $H^n(\Omega)$-based results of Siegel and Xu \cite{Siegel20ApproximationRatesNeural} to general Sobolev norms $W^{n,r}(\Omega)$ and arbitrary dimension, establishing a natural connection in the phase-space framework.

\paragraph{4. Global Approximation on $\rr{d}$.}
Local results do not immediately extend to unbounded domains. Our second main theorem (Theorem~\ref{thm:approximation_sobolev_space_G}) addresses this by considering a modified dictionary $\bb{D}_\Omega$ where the spatial shifts $y$ are restricted to a fixed bounded set $\Omega\subset\rr{d}$. We show that for all $f \in M^{p,q}_m(\rr{d})$ with $0<p,q<\infty$ and suitable $m$, one still has the global bound
\[
  \inf
  \| f - f_N \|_{W^{n,r}(\rr{d})}
  \le C\, N^{-1/2}\, \|f\|_{M^{p,q}_m(\rr{d})},
\]
for all $N\in\nn{}$, where the infimum is taken over all shallow networks \( f_N \) with \( N \) neurons activated by functions of the form given in \eqref{eq:act_dic_intro} such that the  spatial shifts $y$ are restricted to a fixed bounded set $\Omega\subset\rr{d}$. As a corollary, we obtain global Sobolev approximation results for the weighted Feichtinger algebra and, via embeddings, for Bessel potential spaces $W^{r,t}(\rr{d})$. We emphasize that our results significantly generalize the findings in \cite{Parhi2023ModulationSpacesCurse}.

\paragraph{5. Numerical Validation via Modulation Neural Networks.}
Finally, we complement our theoretical analysis with numerical experiments based on a \emph{Modulation Neural Network} architecture that is directly inspired by the dictionary $\bb{D}$ in Theorem~\ref{thm:approximation_sobolev_space_local}. In this architecture, the network units implement windowed activation functions of the form used in our approximation results. Through extensive experiments in one and two spatial dimensions, we observe that:
\begin{enumerate}[(i)]
    \item modulation networks consistently outperform standard shallow ReLU networks of comparable (or even larger) parameter counts when the error is measured in Sobolev norm;
    \item the windowed structure yields markedly better localization, leading to significantly improved approximation of derivatives compared to vanilla architectures;
    \item the proposed architecture exhibits faster convergence during training (for both Adam and AdamW optimizers) and higher expressivity per parameter, providing empirical support for the efficiency suggested by our theoretical bounds.
\end{enumerate}
In two-dimensional test problems, the loss-vs-epochs plots in \cref{fig:2d_prediction_expressivity} indicate that the modulation network achieves an empirical decay rate in the $H^1$ error that is steeper than a Monte Carlo-type $N^{-1/2}$ baseline. This suggests that the classical Monte Carlo rate may not be sharp for this architecture and function class, and it naturally raises the open question of what the optimal approximation rate should be in this phase-space-informed setting. Taken together, these experiments show that our phase-space-guided architectural design is not merely of theoretical interest: it leads to tangible improvements in accuracy and convergence in learning tasks arising from PDE settings.

\subsection{Organization of Paper}

The remainder of this article is organized as follows. In Section~\ref{sec:preliminaries}, we introduce the necessary functional analytic background, including the definition and properties of the STFT and the weighted modulation spaces $M^{p,q}_m$. Section~\ref{subsec:ConvergenceInBochnerSobolevNorms} establishes key embedding results between modulation and Sobolev spaces. The main theoretical contributions are presented in Section~\ref{sec:M_space_approximation}, where we derive approximation rates for shallow neural networks first on bounded domains \cref{thm:approximation_sobolev_space_local} and subsequently on unbounded domains \cref{thm:approximation_sobolev_space_G}. We discuss specific implications for the Feichtinger algebra, Shubin--Sobolev spaces, Barron spaces, and Bessel Potential spaces within this section. Finally, 
Section~\ref{sec:experiments} presents numerical experiments that illustrate the computational efficacy of our approach, demonstrating the superior performance of the proposed windowed architecture compared to standard neural networks in various approximation tasks.

\section{Preliminaries}\label{sec:preliminaries}

In what follows we recall the basic definitions and properties we shall use in the current paper. Main subject is the introduction of the \emph{short-time Fourier transform} (STFT) and its use to define the related modulation spaces.\par 
\textbf{Notations}.
We denote by $d\in\nn{}$ the dimension of the space. The space   $\mathscr{S}(\rd)$ is the Schwartz class of smooth rapidly
decreasing functions and $\mathscr{S}'(\rd)$ its dual (the space of tempered distributions). The class $\mathcal{C}_c^\infty(\rd)$ is  the space of compactly supported and smooth functions.

The brackets  $(f,g)$ means the extension to $\mathscr{S}' (\rd)\times\mathscr{S}(\rd)$ of the inner product $( f,g)=\int f(t){\overline {g(t)}}dt$ on $L^2(\rd)$ (conjugate-linear in the second component).

We denote the
Fourier transform and its inverse by
$$
\mathscr Ff(\xi )= \widehat f(\xi ) = \int _{\rr
{d}} f(x)e^{-2\pi i\scal  x\xi } dx,\quad \mathscr F^{-1}f(\xi )= \check f(\xi ) =\int _{\rr
{d}} f(x)e^{2\pi i\scal  x\xi } dx,
$$
where  $f\in\mathscr{S}(\rd)$ and $\scal \cdo \cdo$ denotes the standard inner product
on $\rr d$. 
The map $\mathscr F$ extends
uniquely to a homeomorphism on 
${\mathscr S} '(\rr d)$,
to a unitary operator on $L^2(\rr d)$ and restricts
to  a homeomorphism on the Schwartz space $\mathscr S(\rr d)$.
With this normalization, the Fourier
transform satisfies the classical
convolution relations:
\begin{equation*}
{\mathscr F} (f\cdot g)
=
\widehat f *\widehat g
\quad \text{and}\quad
{\mathscr F} (f* g)
=
\widehat f \cdot \widehat g
\end{equation*}
for all $f,g\in {\mathscr S} (\rr d)$.

\subsection{The Short-Time Fourier Transform}\label{subsec1.1}

In signal analysis and time-frequency methods, it is often insufficient to analyze a signal solely in either the time or frequency domain. To capture how frequency content evolves over time, one employs the STFT. Unlike the classical Fourier transform, which offers a global frequency representation, the STFT introduces a windowing function to localize the signal temporally before applying the Fourier transform. This results in a two-variable function capturing both time and frequency behavior simultaneously.  If we introduce the translation $T_x$ and modulation $M_\omega$ operators, namely
$$
    T_x f(t)=f(x-t),\quad M_\omega f(t)=e^{2\pi i\omega\cdot t}f(t),
$$
the STFT of a signal $f\in L^2(\rr d)$ with respect to a non-zero window $g\in L^2(\rr d)$  is given by

\begin{equation}\label{eq:STFTdef}
    (V_g f)(x,\omega )=
(f, M_\omega T_x g )_{L^2}= {\mathscr F} (f\cdot T_x\overline {g })(\omega )=\int_{\rr d} f(y) \, \overline{g(y - x)} \, e^{-2\pi i  y\cdot\omega} \, dy
\end{equation}
The definition is extended to $(f,\phi)\in\mathscr{S}'(\rrd)\times\mathscr{S}(\rrd)$ ,  see \cite[Chapter 2]{Cordero20TimeFrequencyAnalysisOperators}
for the properties of the STFT. 

\subsection{Function Spaces}\label{sec:modulation_spaces}

In this section we collect the definitions and basic properties of the function spaces used throughout our analysis. We recall weighted Fourier–Lebesgue spaces, Barron spaces, modulation spaces and their embeddings, and classical Sobolev spaces. These spaces provide the analytic framework for our approximation results.
Note that, many of the function spaces considered below are defined with respect to
weight functions.
To streamline the presentation, we first introduce the class of
weights that will be used throughout this section.

\textbf{Weight Functions}.
Let $v$ be a continuous, positive, and submultiplicative weight function on $\rr d$, that is,
\[
v(z_1 + z_2) \leq v(z_1) v(z_2), \quad \text{for all } z_1, z_2 \in \rr d.
\]
A function $m$ belongs to the class $\mathcal{M}_v(\rr d)$ if it is positive, continuous, and satisfies the $v$-moderateness condition:
\[
m(z_1 + z_2) \leq C v(z_1) m(z_2), \quad \forall z_1, z_2 \in \rr d,
\]
for some constant $C > 0$.

We will focus on polynomial-type weights on $\rr {n}$, $n=d$ or $n=2d$, given by
\begin{equation}\label{weightvs}
v_s(z) = \langle z \rangle^s, \quad z \in \rr {n},
\end{equation}
where
\begin{equation*}
\langle z \rangle = (1 + |z|^2)^{1/2},
\end{equation*}
and their tensor products on $\rr {2d}$:
\begin{equation*}
(v_s \otimes 1)(x, \xi) = (1 + |x|^2)^{s/2}, \quad (1 \otimes v_s)(x, \xi) = (1 + |\xi|^2)^{s/2}, \quad x, \xi \in \rr {d}.
\end{equation*}
Note that for $s < 0$, the function $v_s$ is $v_{|s|}$-moderate.

Given two weights $m_1$ and $m_2$ on $\rr d$, their tensor product is defined as
\[
(m_1 \otimes m_2)(x, \xi) = m_1(x) m_2(\xi), \quad x, \xi \in \rr d,
\]
and similarly when $m_1, m_2$ are defined on $\rr {2d}$.

\subsubsection{Weighted Lebesgue and Fourier-Lebesgue spaces}

Let $0<p\leq\infty$ and let $m:\rr{d}\to(0,\infty)$ be a weight function.
The weighted Lebesgue space $L^p_m(\rr{d})$ consists of all measurable
functions $f:\rr{d}\to\cc{}$ such that the following (quasi-)norm
\[
\|f\|_{L^p_m(\rr{d})}
:=\begin{cases}
\displaystyle
\left(\int_{\rr{d}} |f(x)|^p m(x)^p\,dx\right)^{1/p}, & 0<p<\infty,\\[1em]
\displaystyle
\operatorname*{ess\,sup}_{x\in\rr{d}} |f(x)|\,m(x), & p=\infty,
\end{cases}
\]
is finite.

Similarly, for  $0<p,q\leq\infty$,  and $F:\rdd\to\cc{}$ measurable, we set 
\[
\|f\|_{L^{p,q}_m\rr{2d}}:=\left(\int_{\rd}\left(\int_{\rd}|F(x,y)|^pm(x,y)^p dx\right)^{\frac{q}{p}}dy\right)^{\frac{1}{q}},
\]
where $m$ is a weight function on $\rdd$.

The weighted Fourier-Lebesgue spaces $\mathcal{F}L^p_s(\rr{d})$ are defined in terms of the weighted integrability of the Fourier transform (see \cite{Pilipovic10MicrolocalanalysisFourier, Katznelson2004IntroductionHarmonicAnalysis}).

\begin{definition}[Weighted Fourier-Lebesgue Spaces]
Let $0<p \le \infty$ and $s \in \rr{}$.  
The \emph{weighted Fourier-Lebesgue space} $\mathcal{F}L^p_s(\rr{d})$ is defined by
\begin{equation}\label{eq:FLs}
\mathcal{F}L^p_s(\rr{d})
= \left\{ f \in \mathscr{S}'(\rr{d}) : 
\| f \|_{\mathcal{F}L^p_s} :=\|v_s\hat f\|_{L^p(\rr{d})} < \infty \right\},
\end{equation}
where $v_s$ is defined in \eqref{weightvs}.
\end{definition}

\subsubsection{Barron spaces}

Barron spaces, introduced in the seminal works of Barron \cite{Barron93UniversalApproximationBounds}, and further developed e.g., in \cite{E2022BarronSpaceFlowInduced, Voigtlaender22SamplingNumbersFourierAnalytic, Abdeljawad23SpaceTimeApproximationShallow}, provide a Fourier-analytic framework for functions efficiently approximated by shallow neural networks.

\begin{definition}[Barron Norm and Barron Space]

For $s \in \rr{}$, we define the \emph{Barron space} as
\begin{equation*}
B_s(\rr{d}) = \left\{ f \in \mathscr{S}'(\rr{d}) : \| f \|_{B_s} < \infty \right\},
\end{equation*}
where the \emph{Barron norm} of $f$ is defined as
\begin{equation*}
\| f \|_{B_s} = \int_{\rr{d}} (1 + |\xi|)^s \, |\widehat{f}(\xi)| \, d\xi.
\end{equation*}
\end{definition}

Putting $s=1$ in \eqref{eq:FLs}, we obtain $$\cF L^1_{v_s}(\rr{})=\{f\in  \mathscr{S}': \|f\|_{\cF L^1_{v_s}}:=\|\hat{f}v_s\|_{L^1}<\infty\}.$$
Since $(1+|\xi|)^s\asymp v_s(\xi)$, $s\in\rr{},$
see, e.g., \cite{Cordero20TimeFrequencyAnalysisOperators,Grochenig01FoundationsTimeFrequencyAnalysis},
we infer that 
\begin{equation}\label{eq:Barron}
   \|f\|_{\cF L^1_{v_s}}\asymp \| f \|_{B_s},
\end{equation}
so that we have the equality of the normed spaces:
\begin{equation}\label{eq:Barron-spaces}
 B_s(\rr{d})=\cF L^1_{v_s}(\rr{d}),\quad\forall s\in\rr{}.
\end{equation}

\subsubsection{Modulation spaces}

Modulation spaces, originally introduced by Feichtinger in \cite{Feichtinger03ModulationSpacesLocally}, and further developed in works such as \cite{Galperin04TimefrequencyAnalysisModulation}, are now a standard topic in time-frequency analysis, with detailed treatments found in \cite{Benyi20ModulationSpacesApplications,Cordero20TimeFrequencyAnalysisOperators,Grochenig01FoundationsTimeFrequencyAnalysis,Grochenig00NonlinearApproximationLocal, Feichtinger92WilsonBasesModulation, Abdeljawad2020LiftingsUltramodulationSpaces}.

Let $g \in \mathscr{S}(\rr d)$ be a nonzero window function, $m \in \mathcal{M}_v$, and $0< p, q \leq \infty$. The modulation space $M^{p,q}_m(\rr d)$ consists of all tempered distributions $f \in \mathscr{S}'(\rr d)$ such that
\begin{equation*}
\|f\|_{M^{p,q}_m} = \|V_g f\|_{L^{p,q}_m} = \left( \int_{\rr d} \left( \int_{\rr d} |V_g f(x, \omega)|^p m(x, \omega)^p \, dx \right)^{q/p} d\omega \right)^{1/q} < \infty,
\end{equation*}
with the usual conventions when $p = \infty$ or $q = \infty$. The STFT $V_g f$ is defined as in \eqref{eq:STFTdef}. We also use the simplified notation $M^p_m(\rr d)$ for $M^{p,p}_m(\rr d)$ and $M^{p,q}(\rr d)$ when $m \equiv 1$.

The space $M^{p,q}(\rr d)$ is a Banach space whenever $p,q\geq1$ and a quasi-Banach one in the other cases. Its (quasi-)norm does not depend (up to equivalence) on the specific choice of the window function $g$, provided $g \neq 0$. The class of admissible windows can be enlarged to include all functions of $M^1_v(\rr d)$, also known as the Feichtinger algebra. In particular, $M^{\infty,1}(\rr d)$ is referred to as Sjöstrand's class \cite{Sjostrand94AlgebraPseudodifferentialOperators}.\par
\emph{Duality}. If $p, q < \infty$, then
\[
\big( M^{p,q}_m(\rr{d}) \big)' \cong M^{p',q'}_{1/m}(\rr{d}),
\]
where
\begin{equation*}
p' :=
\begin{cases}
\infty, & 0 < p \leq 1, \\[6pt]
\frac{p}{p-1}, & 1 < p < \infty,
\end{cases}
\qquad
q' :=
\begin{cases}
\infty, & 0 < q \leq 1, \\[6pt]
\frac{q}{q-1}, & 1 < q < \infty.
\end{cases}
\end{equation*}

Modulation spaces satisfy the following inclusion chain:
if $0<p_1\leq p_2\leq\infty$, $0<q_1\leq q_2\leq\infty$ and $m_1,m_2$ weights in $\rdd$ which satisfy $m_2\lesssim m_1$, then
\begin{equation}\label{eq:inclmod}
    \mathscr{S}(\rd)\hookrightarrow M^{p_1,q_1}_{m_1}(\rd)\hookrightarrow M^{p_2,q_2}_{m_2}(\rd)\hookrightarrow\mathscr{S}'(\rd).
\end{equation}
The closure of $\mathscr{S}(\rr d)$ in the $M^{p,q}_m$ norm is denoted by $\mathcal{M}_m^{p,q}(\rr d)$ and satisfies
\begin{equation}\label{eq:density_mod}
    \mathcal{M}_m^{p,q}(\rr d) \subseteq M^{p,q}_m(\rr d), \quad \text{and} \quad \mathcal{M}_m^{p,q}(\rr d) = M^{p,q}_m(\rr d)
\end{equation}
whenever $p < \infty$ and $q < \infty$. Inclusion relations for modulation spaces were refined in the following recent contribution (see also \cite[Theorem 2.22]{Bastianoni21SubexponentialDecayRegularity}), which is convenient for our purposes,
and which will be used in Section \ref{sec:M_space_approximation}.
\begin{theorem}[{\cite[Theorem 4.11]{Guo18SharpWeightedConvolution}}] \label{thm:embeddings}
Let $0 < p_j, q_j \leq \infty$, $s_j, t_j \in \rr {}$, for $j=1,2$, and consider the 
polynomial weights $v_{t_j}, v_{s_j}$ defined as in \eqref{weightvs}. Then
\[
M^{p_1,q_1}_{v_{t_1}\otimes v_{s_1}}(\rr d) \hookrightarrow 
M^{p_2,q_2}_{v_{t_2}\otimes v_{s_2}}(\rr d)
\]
if the following two conditions hold:

\begin{enumerate}[(i)]
    \item $(p_1,p_2,t_1,t_2)$ satisfies one of the following:
    \begin{description}
        \item[(C1)] $\dfrac{1}{p_2} \leq \dfrac{1}{p_1}$, \quad $t_2 \leq t_1$,
        \item[(C2)] $\dfrac{1}{p_2} > \dfrac{1}{p_1}$, \quad 
        $\dfrac{1}{p_2} + \dfrac{t_2}{d} < \dfrac{1}{p_1} + \dfrac{t_1}{d}$;
    \end{description}
    
    \item $(q_1,q_2,s_1,s_2)$ satisfies either \textbf{\rm (C1)} or \textbf{\rm (C2)}  with 
    $p_j$ replaced by $q_j$ and $t_j$ replaced by $s_j$, respectively.
\end{enumerate}
\end{theorem}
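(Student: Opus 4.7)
The plan is to reduce the modulation-space embedding to a weighted mixed-norm sequence-space embedding via a Gabor frame discretization, and then to handle the two phase-space variables independently. Concretely, fix a Schwartz window $g$ and lattice parameters $\alpha,\beta>0$ chosen small enough that $\{M_{\beta n}T_{\alpha k}g\}_{(k,n)\in\zz{2d}}$ forms a Gabor frame. The standard atomic characterization of (quasi-Banach) modulation spaces then yields, for every $v$-moderate weight $m$,
\[
\|f\|_{M^{p,q}_m(\rr{d})} \asymp \big\|(V_g f(\alpha k,\beta n))_{k,n}\big\|_{\ell^{p,q}_{\tilde m}(\zz{2d})}, \qquad \tilde m(k,n)=m(\alpha k,\beta n).
\]
For the tensor-product polynomial weight $v_{t}\otimes v_{s}$ one has $\tilde m(k,n)\asymp v_{t}(k)\,v_{s}(n)$ on the lattice, so establishing the theorem reduces to proving the sequence inclusion $\ell^{p_1,q_1}_{v_{t_1}\otimes v_{s_1}}(\zz{2d})\hookrightarrow \ell^{p_2,q_2}_{v_{t_2}\otimes v_{s_2}}(\zz{2d})$.

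Since the mixed $\ell^{p,q}$ norm is iterated first in the space index $k$ and then in the frequency index $n$, and since the weight factors as $v_t(k)v_s(n)$, the two-dimensional inclusion decouples cleanly into two scalar problems: $\ell^{p_1}_{v_{t_1}}(\zz{d})\hookrightarrow \ell^{p_2}_{v_{t_2}}(\zz{d})$ under condition (i), and the analogous statement with $p_j,t_j$ replaced by $q_j,s_j$ under condition (ii). For each of these, case (C1) is immediate, since $v_{t_2}\le v_{t_1}$ pointwise combined with the elementary embedding $\ell^{p_1}(\zz{d})\hookrightarrow \ell^{p_2}(\zz{d})$ for $p_1\le p_2$ yields the bound with constant one. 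For case (C2), set $1/p:=1/p_2-1/p_1>0$, factor $a\,v_{t_2}=(a\,v_{t_1})\cdot v_{t_2-t_1}$, and apply Hölder's inequality on $\zz{d}$ with conjugate triple $(p_1,p,p_2)$ to obtain
\[
\|a\,v_{t_2}\|_{\ell^{p_2}(\zz{d})} \le \|a\,v_{t_1}\|_{\ell^{p_1}(\zz{d})}\,\|v_{t_2-t_1}\|_{\ell^{p}(\zz{d})}.
\]
The remaining tail $\|v_{t_2-t_1}\|_{\ell^{p}(\zz{d})}$ is finite precisely when $(t_1-t_2)p>d$, i.e.\ $t_1-t_2>d(1/p_2-1/p_1)$, which rearranges to $1/p_2+t_2/d<1/p_1+t_1/d$, exactly the inequality in (C2). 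The same computation applies verbatim with $(p_j,t_j)$ replaced by $(q_j,s_j)$.

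The main obstacle lies in the first step: securing the Gabor atomic equivalence throughout the full quasi-Banach range $0<p,q\le\infty$, where duality arguments are unavailable and one cannot invoke the classical Banach-space proofs. I would address this by appealing to the quasi-Banach Gabor frame theory of Galperin-Samarah and choosing $g$ to lie in a small enough weighted modulation space (for instance, $\mathcal{C}_c^\infty(\rr{d})$ or a Schwartz function with sufficient phase-space concentration) to ensure that both the analysis operator $f\mapsto (V_g f(\alpha k,\beta n))$ and the synthesis operator are bounded on $\ell^{p,q}_{\tilde m}(\zz{2d})$ for every admissible parameter. A secondary concern is that the strict inequality in (C2) is sharp: the critical equality case $1/p_2+t_2/d=1/p_1+t_1/d$ must be ruled out by exhibiting explicit Gabor atoms concentrated at a lattice sequence along which the target norm diverges while the source norm remains finite.
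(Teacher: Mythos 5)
This statement is imported verbatim from the literature (it is cited as Theorem 4.11 of Guo et al.), and the paper you were given contains no proof of it at all, so there is nothing internal to compare your argument against; what you have written is a self-contained proof of the sufficiency claim actually being asserted. Your route is the standard and correct one: the Gabor-frame (Galperin--Samarah) characterization valid for $0<p,q\le\infty$ with a Schwartz window and polynomially moderate weights reduces the embedding to $\ell^{p_1,q_1}_{v_{t_1}\otimes v_{s_1}}(\zz{2d})\hookrightarrow \ell^{p_2,q_2}_{v_{t_2}\otimes v_{s_2}}(\zz{2d})$ (using $\langle\alpha k\rangle^t\asymp\langle k\rangle^t$ on the lattice); the tensor structure of the weight and the monotonicity of the outer $\ell^{q_2}_{v_{s_2}}$ norm let you treat the $k$- and $n$-variables separately, with the inner constant uniform in $n$; case (C1) is the pointwise weight comparison together with $\ell^{p_1}\hookrightarrow\ell^{p_2}$, and case (C2) is Hölder with $1/p=1/p_2-1/p_1$ plus the convergence of $\sum_k\langle k\rangle^{(t_2-t_1)p}$, which is exactly the strict inequality $1/p_2+t_2/d<1/p_1+t_1/d$ (note this also works when $p_1=\infty$, and the Hölder step is legitimate in the quasi-Banach range since it reduces to classical Hölder applied to $|f|^{p_2}|g|^{p_2}$). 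Two small remarks: the final paragraph about sharpness of (C2) is not needed, since the theorem as stated (and as used in the paper) only claims sufficiency — the cited reference proves the characterization is sharp, but that is beyond what is required here; and the only real technical burden, which you correctly identify, is the quasi-Banach Gabor characterization, for which choosing a Gaussian or Schwartz window and a sufficiently fine lattice suffices uniformly over all admissible $(p,q)$ and polynomial weights. An alternative proof avoiding discretization would run through the Wiener amalgam property $V_g f\in W(L^\infty,L^{p,q}_m)$ of the STFT, but your sequence-space reduction is cleaner precisely where duality-based arguments fail.
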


\paragraph{Embedding Between Barron and Modulation Spaces.}
In the sequel we shall use the inclusion of the weighted Feichtinger algebra in the Barron space as follows:
\begin{lemma}\label{lem:Barron-Feichtinger-alg}
For any $s\in\rr{}$, we have 
\begin{equation*}
M^1_{1\otimes v_s}(\rr{d})\hookrightarrow B_s(\rr{d}),
\end{equation*}
with continuous inclusion.
\end{lemma}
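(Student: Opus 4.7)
My plan is to exploit the fact that integrating the STFT over the spatial variable reproduces the Fourier transform, up to a multiplicative constant. Once this identity is in place, the desired embedding follows from a one-line estimate combined with the norm equivalence \eqref{eq:Barron}.

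Concretely, I would fix any window $g\in\mathscr{S}(\rr{d})$ with $c_g:=\int_{\rr{d}} g(z)\,dz\neq 0$ (for instance a normalized Gaussian). For $f\in\mathscr{S}(\rr{d})$, Fubini's theorem applied to the defining formula \eqref{eq:STFTdef}, together with the change of variable $z=y-x$, yields the identity
\[
\int_{\rr{d}} V_g f(x,\omega)\,dx \;=\; \overline{c_g}\,\widehat{f}(\omega), \qquad \omega\in\rr{d}.
\]
Taking absolute values, multiplying by $v_s(\omega)$, and integrating in $\omega$ gives
\[
|c_g|\,\|v_s\,\widehat{f}\|_{L^1(\rr{d})} \;\le\; \int_{\rr{d}}\!\int_{\rr{d}} |V_g f(x,\omega)|\,v_s(\omega)\,dx\,d\omega \;=\; \|f\|_{M^1_{1\otimes v_s}}.
\]
By the definition \eqref{eq:FLs} of $\cF L^1_{v_s}$ and the norm equivalence \eqref{eq:Barron} identifying $B_s(\rr{d})$ with $\cF L^1_{v_s}(\rr{d})$, this is exactly the Barron-norm bound $\|f\|_{B_s}\lesssim \|f\|_{M^1_{1\otimes v_s}}$.

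The only genuine technical point, and the step I expect to require a line or two of care, is that the statement concerns arbitrary $f\in M^1_{1\otimes v_s}(\rr{d})$, which a priori is only a tempered distribution, so the Fubini computation above does not apply verbatim. To handle this I would invoke the density property \eqref{eq:density_mod}: since $p=q=1<\infty$, the Schwartz class is dense in $M^1_{1\otimes v_s}(\rr{d})$. Choosing $(f_n)\subset\mathscr{S}(\rr{d})$ with $f_n\to f$ in the modulation norm, the inequality above shows $(\widehat{f_n})$ is Cauchy in $L^1_{v_s}(\rr{d})$; its limit must coincide with $\widehat{f}$ by uniqueness of the Fourier transform on $\mathscr{S}'(\rr{d})$, and the estimate passes to the limit. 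Apart from this routine density argument, I do not foresee any substantive obstacle, and the continuous inclusion $M^1_{1\otimes v_s}(\rr{d})\hookrightarrow B_s(\rr{d})$ follows with an explicit constant proportional to $|\int g|^{-1}$ times the (uniform) equivalence constants of \eqref{eq:Barron}.
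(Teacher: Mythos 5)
Your argument is correct, but it reaches the conclusion by a different, more self-contained route than the paper. The paper's proof is a two-line citation: it invokes the known mapping properties of the weighted Feichtinger algebra to get $M^1_{1\otimes v_s}(\rr{d})\hookrightarrow (L^1\cap\cF L^1_{v_s})(\rr{d})$ and then applies the norm equivalence \eqref{eq:Barron} (equivalently, the identification \eqref{eq:Barron-spaces}) to land in $B_s(\rr{d})$. You instead prove the essential inclusion $M^1_{1\otimes v_s}\hookrightarrow \cF L^1_{v_s}$ from first principles, via the identity $\int_{\rr{d}} V_g f(x,\omega)\,dx=\overline{c_g}\,\widehat f(\omega)$ for a window with $c_g=\int g\neq 0$, followed by the weighted $L^1$ estimate and a density argument to pass from $\mathscr{S}$ to all of $M^1_{1\otimes v_s}$; note the $L^1$ component of the paper's intermediate space is not actually needed, and your computation makes that explicit. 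The trade-off: the paper's version is shorter and leans on standard Feichtinger-algebra facts, while yours is elementary, yields an explicit constant $|c_g|^{-1}$ (times the equivalence constants of \eqref{eq:Barron} and of the change of window), and makes visible exactly where the hypothesis $p=q=1$ enters. Two small points worth stating if you write it up: the modulation norm is computed with your chosen window $g$, so you should record that this norm is equivalent to the one with any other admissible window; and in the limiting step, the identification of the $L^1_{v_s}$-limit with $\widehat f$ uses that convergence in $M^1_{1\otimes v_s}$ implies convergence in $\mathscr{S}'(\rr{d})$ (by \eqref{eq:inclmod}) and that $L^1_{v_s}$-convergence implies $\mathscr{S}'$-convergence because $v_{-s}$ is polynomially bounded — both routine, but they are the actual content of your "uniqueness of the Fourier transform" remark.
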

\begin{proof}
 We use    the equality \eqref{eq:Barron} and the properties of the weighted Feichtinger algebra \cite{Feichtinger03ModulationSpacesLocally}:
 \begin{equation*}
 M^1_{1\otimes v_s}(\rr{d})\hookrightarrow (L^1\cap\cF L^1_{v_s})(\rr{d})\hookrightarrow B_s(\rr{d}).
 \end{equation*}
 This concludes the proof.
\end{proof}

\subsubsection{Potential Sobolev Spaces \texorpdfstring{\( W^{s,r}(\rr d) \)}{W r s(Rd)}.}\label{subsec:sobolev}

Let \( s \in \rr{} \) and \( 1 \leq p \leq \infty \). The Sobolev space \( W^{s,r}(\rr d) \) is defined as the set of all tempered distributions \( f \in \mathscr{S}'(\rr d) \) such that
\[
\|f\|_{W^{s,r}} := \left\| \mathscr{F}^{-1} \left(  v_s \hat{f}\right) \right\|_{L^r(\rr d)} < \infty,
\]
where $v_s$ is defined in  \eqref{weightvs}.
Equivalently, we can write
\[
W^{s,r}(\rr d) = \left\{ f \in \mathscr{S}'(\rr d) : \langle D \rangle^s f \in L^r(\rr d) \right\},
\]
where the Bessel potential operator \( \langle D \rangle^s \) is defined by
\[
\langle D \rangle^s f := \mathscr{F}^{-1} \left(v_s \hat{f} \right).
\]

If $s=n\in\zzp{}$, then spaces above coincide with those defined by derivatives.
Note that the Fourier-Lebesgue space is the Fourier image of the Bessel
potential space (see \cite{Pilipovic10MicrolocalanalysisFourier}).
Furthermore, the inclusion relations between Sobolev and modulation spaces were proved by Toft, see  Proposition 2.9. in  \cite{Toft04ContinuityPropertiesModulation}.
\begin{proposition}\label{prop:embedding_in_fractional_Sobolev_spaces}
    Assume that \( s\in \rr{} \), $1\leq p,q,r\leq\infty$.  If \( q\leq p\le r \le q'  \),  then 
    \begin{equation*}
         M^{p,q}_{1\otimes v_s}(\rr d)  \hookrightarrow W^{s,r}(\rr d),
    \end{equation*}
    with continuous inclusion.
\end{proposition}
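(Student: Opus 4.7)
The plan is to reduce the weighted embedding to an unweighted one by peeling off the Bessel potential, and then to establish the base embedding $M^{p,q}(\rr d) \hookrightarrow L^r(\rr d)$ directly from the STFT inversion formula together with classical inequalities. Throughout I would fix a nonzero Schwartz window $g \in \mathscr{S}(\rr d)$, so that all STFTs decay rapidly.

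First I would show that the Bessel potential $\langle D\rangle^s$ induces an isomorphism between $M^{p,q}_{1\otimes v_s}(\rr d)$ and $M^{p,q}(\rr d)$. Writing the duality pairing $V_g(\langle D\rangle^s f)(x,\omega) = (f, \langle D\rangle^s (M_\omega T_x g))$ and factoring $v_s(\xi) = v_s(\omega)\,[v_s(\xi)/v_s(\omega)]$, the $v_{|s|}$-moderateness of $v_s$ yields a kernel with rapid decay, so that $|V_g(\langle D\rangle^s f)(x,\omega)| \lesssim v_s(\omega) \cdot (|V_{g_s} f(x,\cdot)| \ast h)(\omega)$ for some window $g_s \in \mathscr{S}(\rr d)$ and some $h \in \mathscr{S}(\rr d)$. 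Since the modulation norm is window-independent and stable under convolution against Schwartz weights in the frequency variable, this gives
\[
  \|\langle D\rangle^s f\|_{M^{p,q}} \asymp \|f\|_{M^{p,q}_{1\otimes v_s}}.
\]
This reduces the proposition to proving $M^{p,q}(\rr d) \hookrightarrow L^r(\rr d)$ under $q \le p \le r \le q'$.

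For the unweighted embedding I would invoke the STFT inversion formula
\[
  f = \frac{1}{\|g\|_{L^2}^2} \iint_{\rr{2d}} V_g f(x,\omega)\, M_\omega T_x g \, dx\, d\omega,
\]
and bound $\|f\|_{L^r}$ by a sequence of inequalities: Minkowski's integral inequality in $\omega$, then Young's convolution inequality in $x$ (which uses $p \le r$ to make the auxiliary Young exponent admissible), then Hausdorff--Young in $\omega$ on the STFT slice (which uses $q \le 2$, itself forced by $q \le q' $), and finally Minkowski's integral inequality to commute the order of integration (which uses $q \le p$) so that the resulting mixed norm matches $\|V_g f\|_{L^{p,q}} = \|f\|_{M^{p,q}}$. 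Together these produce $\|f\|_{L^r} \lesssim \|f\|_{M^{p,q}}$, as required.

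The main obstacle is orchestrating the order of integrations so that all three constraints $q \le p$, $p \le r$, and $r \le q'$ are used without slack. A cleaner alternative, which I would likely adopt for the exposition, is to avoid this juggling by complex interpolation between the three endpoint embeddings $M^{1,1} \hookrightarrow L^\infty$ (an immediate bound in the inversion formula), $M^{2,2} = L^2$ (Plancherel), and $M^{q',q} \hookrightarrow L^{q'}$ for $1 \le q \le 2$ (from Hausdorff--Young applied to the partial Fourier inverse $\mathscr{F}^{-1}_\omega V_g f(x,t) = f(t)\overline{g(t-x)}$), combined with the monotonicity $M^{p,q_1} \hookrightarrow M^{p,q_2}$ for $q_1 \le q_2$. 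Either route covers the entire admissible range.
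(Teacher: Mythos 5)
First, a point of comparison: the paper does not prove this proposition at all — it is quoted from Toft (Proposition 2.9 of the cited reference) — so your proposal is a self-contained substitute rather than a variant of an in-paper argument. Your overall reduction is the right one: the lifting $\langle D\rangle^s\colon M^{p,q}_{1\otimes v_s}(\rr{d})\to M^{p,q}(\rr{d})$ is a standard isomorphism (though you only sketch it, and should either cite the lifting theorem or run the convolution estimate for both $\langle D\rangle^{s}$ and $\langle D\rangle^{-s}$), and it converts the statement into the unweighted embedding $M^{p,q}(\rr{d})\hookrightarrow L^r(\rr{d})$ for $q\le p\le r\le q'$, which is true.

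The genuine gap is in your proof of that unweighted embedding. The interpolation route you say you would adopt — endpoints $M^{1,1}\hookrightarrow L^\infty$, $M^{2,2}=L^2$, and $M^{q',q}\hookrightarrow L^{q'}$ for $1\le q\le 2$, plus monotonicity in the modulation indices — cannot reach the part of the admissible range with $r<2$, which the proposition includes (e.g.\ $p=q=r=1$, where $q'=\infty\ge r$, or $q=1$, $p=r=3/2$). All three endpoint families land in Lebesgue spaces with exponent $\ge 2$; complex interpolation between them only produces exponents $\ge 2$ (interpolating the first two pairs gives $M^{r,r}\hookrightarrow L^{r'}$, not $L^{r}$), and the inclusion $M^{p,q_1}\hookrightarrow M^{p,q_2}$ acts on the modulation side only, so it cannot lower the Lebesgue exponent. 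The missing ingredient is the endpoint $M^{1,1}\hookrightarrow L^1$ (immediate from the inversion formula, since $\|f\|_{L^1}\lesssim \|g\|_{L^1}\,\|V_g f\|_{L^1}$); interpolating it with $M^{2,2}=L^2$ gives $M^{r,r}\hookrightarrow L^r$ for $1\le r\le 2$, and then $M^{p,q}\hookrightarrow M^{r,r}$ (using $q\le p\le r$) settles that case, while for $r\ge 2$ your third family with $q$ replaced by $r'$ already gives $M^{r,r'}\hookrightarrow L^r$ and $r\le q'$ guarantees $M^{p,q}\hookrightarrow M^{r,r'}$. Your ``direct'' route is also not coherent as ordered: once you apply Minkowski in $\omega$ first, the oscillation in $\omega$ is gone and Hausdorff--Young in $\omega$ can no longer be exploited, so that chain only yields $M^{p,1}\hookrightarrow L^r$; the correct direct argument starts from the slice estimate $\|f\,\overline{T_x g}\|_{L^{q'}}\le \|V_g f(x,\cdot)\|_{L^q}$ (Hausdorff--Young in $\omega$ for fixed $x$) and then assembles over $x$, using $r\le q'$ for the local exponent and $p\le r$ for the global one, in the spirit of Wiener amalgam embeddings.
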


\subsubsection{Shubin–Sobolev spaces } 

The Shubin–Sobolev spaces admit a characterization in terms of localization operators with Gaussian windows (cf. \cite{Shubin01PseudodifferentialOperatorsSpectral}), commonly known as anti-Wick operators. Namely, define \(\varphi(t) = 2^{d/4} e^{-\pi t^2},\,t\in\rd\). Given a function or distribution \(a\) on \(\rr {2d}\), we define the \emph{anti-Wick operator} \(A_a^{\varphi, \varphi}\) by the (formal) integral
\[
A_a^{\varphi, \varphi} f := \int_{\rr {2d}} a(x, \omega) \, V_{\varphi}f(x, \omega) \, M_\omega T_x \varphi \, dx \, d\omega,
\]
where \(V_{\varphi}f\) denotes the STFT of \(f\) with respect to \(\varphi\), \(T_x\) is the translation operator, and \(M_\omega\) is the modulation operator.
Set \(a(z) = \langle z \rangle^s\) for \(s \in \rr{}\), and define \(A_s := A^{\varphi,\varphi}_a\). Then \emph{Shubin–Sobolev space} \(Q^s\) for \(s \in \rr{}\) is defined by
\[
Q^s(\rr d) := \left\{ f \in \mathscr{S}'(\rr d) : A_s f \in L^2(\rr d) \right\} = A_s^{-1} L^2(\rr d),
\]
with norm
\[
\|u\|_{Q^s} := \|A_s u\|_{L^2}.
\]
It was proved in \cite[Lemma 2.3 ]{Boggiatto04GeneralizedAntiWickOperators} (see also \cite{Cordero03TimeFrequencyanalysis}) the following characterization via modulation spaces:
\begin{lemma}[Characterization of Shubin–Sobolev Spaces]\label{lem:shubin}
For all \(s \in \rr{}\), we have
\[
M^2_{v_s}(\rr d) = L^2_s(\rr d)\cap \cF L^2_s(\rr d)= Q^s(\rr d)
\]
with equivalent norms.
\end{lemma}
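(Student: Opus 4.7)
The plan is to establish the two equalities
$M^2_{v_s}(\rr d) = L^2_s(\rr d)\cap \cF L^2_s(\rr d)$ and $M^2_{v_s}(\rr d) = Q^s(\rr d)$
separately. The first is a direct Plancherel-type computation from the STFT definition, while the second relies on the representation of the anti-Wick operator $A_s$ in terms of the STFT, which is clean and invertible thanks to the Gaussian choice of window.

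For $M^2_{v_s} = L^2_s\cap \cF L^2_s$, I would first reduce to $s\ge 0$ by duality: both spaces are Hilbert, and their $L^2$-duals correspond to replacing $s$ by $-s$. For $s\ge 0$ one has the pointwise equivalence $v_s(x,\xi)^2 \asymp \langle x\rangle^{2s}+\langle\xi\rangle^{2s}$, so the squared modulation norm splits into
\[
\int\!\!\int |V_\varphi f(x,\xi)|^2 \langle x\rangle^{2s}\, dx\, d\xi + \int\!\!\int |V_\varphi f(x,\xi)|^2 \langle\xi\rangle^{2s}\, dx\, d\xi.
\]
Evaluating the first integral via Plancherel in $\xi$ and using the $v_{|s|}$-moderateness of $\langle\cdot\rangle^{2s}$ together with $\varphi\in\mathscr{S}$ yields equivalence to $\|f\|^2_{L^2_s}$. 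The second integral is handled symmetrically using the covariance identity $|V_\varphi f(x,\xi)|=|V_{\hat\varphi}\hat f(\xi,-x)|$, which reduces it to the first case with $f,\varphi$ replaced by $\hat f,\hat\varphi$ and yields equivalence to $\|\hat f\|^2_{L^2_s}=\|f\|^2_{\cF L^2_s}$.

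For $M^2_{v_s} = Q^s$, I would use the representation
\[
A_s f = V_\varphi^*(v_s\cdot V_\varphi f),
\]
which follows from the definition of the anti-Wick operator and the standard inversion formula for the STFT. Squaring in $L^2$ gives
\[
\|A_s f\|^2_{L^2(\rr d)} = \langle P(v_s V_\varphi f), v_s V_\varphi f\rangle_{L^2(\rr{2d})},
\]
with $P = V_\varphi V_\varphi^*$ the orthogonal projection onto the range of $V_\varphi$ (the Bargmann--Fock space for Gaussian $\varphi$). The upper bound $\|A_s f\|_{L^2}\lesssim \|f\|_{M^2_{v_s}}$ is immediate from the contractivity of $P$, giving $Q^s \supset M^2_{v_s}$ with norm control.

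The hard part will be the reverse inequality $\|f\|_{M^2_{v_s}}\lesssim \|A_s f\|_{L^2}$, which expresses the ellipticity of $A_s$ as a localization operator with symbol $v_s$. Here the Gaussian structure of $\varphi$ is essential: via the Berezin transform one identifies $A_s$ with a Shubin-type pseudo-differential operator of order $s$, whose Weyl symbol is $v_s$ smoothed by a Gaussian and hence still an elliptic symbol of order $s$. A standard parametrix construction in the Shubin calculus then produces an approximate inverse modulo smoothing operators, and an induction on $|s|$ absorbs the lower-order remainders to yield the desired two-sided bound, completing the identification $Q^s = M^2_{v_s}$.
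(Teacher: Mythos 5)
The paper offers no proof of this lemma at all: it is quoted from \cite[Lemma 2.3]{Boggiatto04GeneralizedAntiWickOperators} (see also \cite{Cordero03TimeFrequencyanalysis}), so your sketch has to be measured against the literature rather than an in-text argument. The parts of your sketch that work are genuinely nice and self-contained: for $s\ge 0$ the splitting $\langle(x,\xi)\rangle^{2s}\asymp\langle x\rangle^{2s}+\langle\xi\rangle^{2s}$, Plancherel in $\xi$ combined with $v_{|s|}$-moderateness, and the covariance identity $|V_\varphi f(x,\xi)|=|V_{\hat\varphi}\hat f(\xi,-x)|$ do give $M^2_{v_s}=L^2_s\cap\cF L^2_s$; and the inequality $\|A_sf\|_{L^2}\le\|f\|_{M^2_{v_s}}$ via $A_sf=V_\varphi^*(v_s V_\varphi f)$ and the projection $V_\varphi V_\varphi^*$ (legitimate because the paper's Gaussian is $L^2$-normalized) is exactly the right way to get $M^2_{v_s}\subseteq Q^s$ with norm control.

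There are, however, two genuine gaps. First, the reduction of the first equality to $s\ge0$ by duality fails: the dual of $L^2_s\cap\cF L^2_s$ is the sum $L^2_{-s}+\cF L^2_{-s}$, not the intersection with index $-s$, and this is not a repairable slip. Indeed, for $s<-d/2$ the intersection form of the statement is false: $V_\varphi\delta(x,\xi)=\overline{\varphi(-x)}$ shows $\delta\in M^2_{v_s}(\rr{d})$, and your own bound $\|A_s\delta\|_{L^2}\le\|v_sV_\varphi\delta\|_{L^2(\rr{2d})}$ shows $\delta\in Q^s(\rr{d})$, while $\delta\notin L^2_s(\rr{d})$ for any $s$. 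For $s<0$ the correct middle term is $L^2_s+\cF L^2_s$; only the identity $M^2_{v_s}=Q^s$ holds for all real $s$, which is in fact all the paper uses later (Corollary~\ref{cor:Hilbert_approximation_sobolev_space_local} has $s_1,s_2>d/2$). Second, the reverse ellipticity estimate $\|f\|_{M^2_{v_s}}\lesssim\|A_sf\|_{L^2}$, which you correctly identify as the crux, is only a plan. A Shubin parametrix $B$ with $BA_s=I+R$, $R$ regularizing, yields $\|f\|_{M^2_{v_s}}\lesssim\|A_sf\|_{L^2}+\|Rf\|_{M^2_{v_s}}$ only once you know that an operator of Shubin order $-s$ maps $L^2$ boundedly into $M^2_{v_s}$ (a modulation-space mapping property that must itself be proved or cited), and ``induction on $|s|$'' does not remove the remainder term: one needs, for instance, injectivity of $A_s$ (positivity of the symbol) combined with compactness of the embedding $M^2_{v_s}\hookrightarrow M^2_{v_{s-N}}$ in a Peetre-type contradiction argument, or the isomorphism theorems for localization operators with moderate weight symbols (Gr\"ochenig--Toft), or a direct Bargmann-transform computation exploiting the Gaussian window. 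As written, that half of the argument is a credible outline rather than a proof.
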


 \paragraph{The Adjoint of the Short-Time Fourier Transform.} Fix $\gamma\in L^2(\rrd)$ , the STFT $V_\gamma: L^2(\rrd)\to L^2(\rr{2d})$ has adjoint  $V_\gamma^*$  given by
 $$ V_\gamma^   \ast F=\int_{\rr{2d}} F(x,\omega) M_\omega T_x \gamma\,dxd\omega.
 $$
The operator $V_\gamma^   \ast$ is a bounded operator from $L^2(\rr{2d})$ onto $L^2(\rrd)$. For $F=V_g f$, with $g,\gamma\in L^2(\rrd)$, $(g,\gamma)\not=0$,  the  inversion formula is given by
 \begin{equation*}
 f=\frac1{( \gamma,g)}V_\gamma^* V_gf,\quad f\in L^2(\rrd).
 \end{equation*}
\begin{theorem}\label{C2invfmod}
    Consider $m\in\mathcal{M}_{v}$ and $g,\gamma\in M^1_v\left(\rr {d}\right)$. Then for $1\leq p,q\leq \infty$,
    \begin{enumerate}[(i)]
        \item  $V_{\gamma}^{*}\,:\,L_{m}^{p,q}\left(\rr {2d}\right)\rightarrow M_{m}^{p,q}\left(\rr {d}\right)$
			and the following estimate holds
			\begin{equation*}
			\left\Vert V_{\gamma}^{*}F\right\Vert _{M_{m}^{p,q}}=\left\Vert V_{g}\left(V_{\gamma}^{*}F\right)\right\Vert _{L_{m}^{p,q}}\lesssim\left\Vert V_{g}\gamma\right\Vert _{L_{v}^{1}}\left\Vert F\right\Vert _{L_{m}^{p,q}}.
			\end{equation*}
		\item If $F=V_{g}f$ and $(\gamma,g)\not=0$, we have the inversion formula in $M_{m}^{p,q}(\rrd)$
			\begin{equation}\label{invfmpq}
			f=\frac{1}{( \gamma,g)}\int_{\rr {2d}}V_{g}f\left(x,\xi\right)M_{\xi}T_{x}\gamma\,{d}x{d}\xi.
			\end{equation}
			In short, 
            \[
			\mathrm{Id}_{M_{m}^{p,q}}=( \gamma,g)^{-1}V_{\gamma}^{*}V_{g}.
			\]		
    \end{enumerate}
\end{theorem}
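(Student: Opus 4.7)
My overall strategy is to prove (i) by reducing the action of $V_\gamma^*$ on $L^{p,q}_m$ to a genuine convolution on the time-frequency plane against the kernel $|V_g\gamma|$, and then to read off (ii) from the classical $L^2$ inversion formula extended by density together with the boundedness granted by (i). The core analytic ingredient will be the covariance relation for the STFT under time-frequency shifts, $V_g(M_\omega T_x \gamma)(x',\omega')=e^{-2\pi i(x'-x)\cdot\omega}\,V_g\gamma(x'-x,\omega'-\omega)$, which converts the twisted integral defining $V_\gamma^*$ into a standard convolution modulo a unimodular factor.

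For (i), I first plug the definition of $V_\gamma^* F$ into $V_g(V_\gamma^* F)(x',\omega')$ and formally exchange the two integrals via Fubini (justification postponed). This yields
\begin{equation*}
V_g(V_\gamma^* F)(x',\omega')=\int_{\rr{2d}} F(x,\omega)\, V_g(M_\omega T_x\gamma)(x',\omega')\,dx\,d\omega,
\end{equation*}
so that the covariance identity above gives the pointwise majorization
\begin{equation*}
\abs{V_g(V_\gamma^* F)(x',\omega')}\le\bigl(\abs{F}\ast\abs{V_g\gamma}\bigr)(x',\omega').
\end{equation*}
At this point I invoke the weighted Young inequality for mixed-norm Lebesgue spaces on $\rr{2d}$, which reads $\|H\ast K\|_{L^{p,q}_m}\lesssim \|K\|_{L^1_v}\|H\|_{L^{p,q}_m}$ whenever $m\in\mathcal{M}_v$; this is where the $v$-moderateness of $m$ enters essentially, through the pointwise bound $m(z)\le C\,v(z-w)m(w)$ inside the convolution. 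Choosing $H=\abs{F}$ and $K=\abs{V_g\gamma}$ directly produces the estimate claimed in (i), and also retroactively justifies the application of Fubini, since all the intermediate integrals are absolutely convergent for almost every $(x',\omega')$. The equality $\|V_\gamma^* F\|_{M^{p,q}_m}=\|V_g(V_\gamma^* F)\|_{L^{p,q}_m}$ is just the definition of the modulation (quasi-)norm, which is independent of the window up to equivalence because $g\in M^1_v$.

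For (ii), the inversion formula is a classical identity on $L^2(\rr{d})$ (recalled just before the theorem), and by duality/continuity it extends to $\mathscr{S}'(\rr{d})$ when paired against test functions. For $1\le p,q<\infty$, the density statement \eqref{eq:density_mod} lets me approximate any $f\in M^{p,q}_m$ by a sequence $f_n\in\mathscr{S}$ converging in the modulation norm; the operator $(\gamma,g)^{-1}V_\gamma^*V_g$ is continuous on $M^{p,q}_m$ by part (i) together with the trivial bound $\|V_g f\|_{L^{p,q}_m}=\|f\|_{M^{p,q}_m}$, so passing to the limit gives \eqref{invfmpq}. For the endpoint cases where $p$ or $q$ equals $\infty$, density fails, so instead I verify the identity on the STFT side: applying $V_g$ to both sides of \eqref{invfmpq} and using the covariance identity as above reduces the claim to the orthogonality relation $V_g f(x',\omega')=(\gamma,g)^{-1}(V_g f\ast_{\text{TF}} V_g\gamma)(x',\omega')$, i.e., the reproducing kernel property of $V_g$, which holds pointwise for every $f$ with $V_g f$ locally integrable with polynomial growth.

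The main obstacle I anticipate is the careful management of integrability and exchange of integrals in the endpoint $p,q=\infty$ setting; the weighted Young estimate provides exactly the uniform control needed, but one must be mindful that $V_g\gamma\in L^1_v$ is guaranteed precisely because $\gamma\in M^1_v$, which is the minimal assumption making the whole argument close. Once this is in place, everything else is bookkeeping built on the covariance of the STFT and the $v$-moderateness of $m$.
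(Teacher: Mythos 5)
Your proposal is correct, but note that the paper does not actually prove this statement: it is quoted as a known result (Gröchenig's inversion theorem for modulation spaces), and your argument is essentially the standard textbook proof that the paper implicitly relies on — the covariance of the STFT giving the pointwise bound $|V_g(V_\gamma^*F)|\le |F|*|V_g\gamma|$, weighted Young's inequality on $L^{p,q}_m$ with $m\in\mathcal{M}_v$ for (i), and the $L^2$ inversion extended by density (for $p,q<\infty$) or by the reproducing/orthogonality relation on the STFT side plus injectivity of $V_g$ on $\mathscr{S}'$ (for the endpoints) for (ii). Two minor points: your covariance identity carries the wrong phase factor (the correct one is $e^{2\pi i x\cdot(\omega-\omega')}V_g\gamma(x'-x,\omega'-\omega)$), which is harmless here since only the modulus enters the majorization; and in the endpoint case the "reproducing kernel property" you invoke is exactly the extension of Moyal's orthogonality relation to pairs in $M^\infty_{1/v}\times\mathscr{S}$ with windows in $M^1_v$, which deserves an explicit justification or citation to avoid circularity with the inversion formula itself.
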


\subsection{Variation Space and Maurey’s Sampling Result}
\label{sec:prelim:VariationSpace}

Let $\mathcal{B}$ be a Banach space, and let $\bb{D}\subset \mathcal{B}$ be a collection of non-zero elements, which we call a \emph{dictionary},
(i.e.,\ a collection of \emph{atoms}).
For geneal nonlinear approximation, the ordering of $\bb{D}$ is irrelevant, only the choice of atoms and their coefficients matters.
For $N\in \nn{}$, and  $M>0$, we define the nonlinear manifold of $N$-term, $\ell_1$-regularized approximants with respect to the dictionary $\bb{D}$ as follows:
\begin{equation*}
    \Sigma_{N,M}(\bb{D})
        :=\Bigl\{\;
            \textstyle\sum_{j=1}^{N} a_{j} h_{j} \; : \;
            h_{j}\in\bb{D},\;
            \sum_{j=1}^{N} |a_{j}| \le M
          \Bigr\}.
\end{equation*}
Removing the $\ell_1$ regularization constraint yields the $N$-term nonlinear manifold
\begin{equation*}
    \Sigma_{N}(\bb{D})
        := \bigcup_{M>0} \Sigma_{N,M}(\bb{D}).
\end{equation*}
Thus $\Sigma_{N,M}(\bb{D})$ consists of all linear combination of at most $N$ atoms drawn from $\bb{D}$, obtained through an $\ell^1$-regularization argument,
whereas $\Sigma_{N}(\bb{D})$ does not involve any regularization. Given a target function in the Banach space $\mathcal{B}$, the nonlinear manifold is used to generate the best possible combination of atoms that approximate the target as accurately as possible.
To rigorize this, we observe that any bounded linear combination of atoms can be normalized into a convex combination. This allows us to measure the complexity of a function by the smallest scaling factor required to fit it within the convex hull of the dictionary.
\begin{definition}
    Let $\mathcal{B}$ be a Banach space and $\bb{D}\subseteq\mathcal{B}$ be a dictionary.
    Then for $f\in\mathcal{B}$, the variation norm of $\bb{D}$ is defined as
    \begin{align*}
        \norm{f}{\mathcal{K}(\bb{D})}&:=\inf\{c>0:f/c\in\overline{\operatorname{conv}(\pm\bb{D})}\}
    \intertext{were, $\overline{\operatorname{conv}(\pm\bb{D})}$
        is the closure of the convex hull of $\bb{D}\cup(-\bb{D})$.
        The corresponding variation space is then the set of functions with finite variation norm}
        \mathcal{K}(\bb{D})&:=\{f\in\mathcal{B}:\norm{f}{\mathcal{K}(\bb{D})}<\infty\}.
    \end{align*}
\end{definition}
The significance of this norm lies in its ability to control the convergence rate of sparse approximations. Specifically, an adaptation of Maurey's approximation result for functions belonging to the variation space of a dictionary is presented in \cite{Siegel22SharpBoundsApproximation} as follows:
\begin{proposition}[Approximation Rate in Type-2 Banach Spaces]
\label{prop:approximation_type2}
    Let $\mathcal{B}$ be a type-2 Banach space
    and $\bb{D} \subset \mathcal{B}$ be a dictionary
    with $K_{\bb{D}}:=\sup _{d \in \bb{D}}\|d\|_{\mathcal{B}}<\infty$.
    Then for $f \in \mathcal{K}(\bb{D})$, we have
    \begin{align*}
        \inf_{f_N \in \Sigma_{N, M_f}(\bb{D})}\left\|f-f_N\right\|_{\mathcal{B}}
        \leq 4 C_{2,\mathcal{B}} K_{\bb{D}}\norm{f}{\mathcal{K}(\bb{D})} N^{-\frac{1}{2}}
    \end{align*}
    with $M_f=\norm{f}{\mathcal{K}(\bb{D})}$.
\end{proposition}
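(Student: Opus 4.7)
The plan is to adapt the classical Maurey--Jones--Barron probabilistic argument to the type-2 Banach setting, carefully tracking constants so as to recover the prefactor $4\, C_{2,\mathcal{B}}\, K_{\bb{D}}$. Writing $M_f := \norm{f}{\mathcal{K}(\bb{D})}$, the definition of the variation norm guarantees that $f/M_f \in \overline{\operatorname{conv}(\pm \bb{D})}$. For arbitrary $\varepsilon > 0$, I would first extract a finite convex combination $h = \sum_{j=1}^{J}\lambda_j d_j$ with $d_j \in \bb{D} \cup (-\bb{D})$, $\lambda_j \geq 0$, and $\sum_j \lambda_j = 1$, satisfying $\|f - M_f h\|_{\mathcal{B}} < \varepsilon$. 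This reduces matters to approximating an exact finite convex combination; the free $\varepsilon$ will disappear at the end when we pass to the infimum over $\Sigma_{N,M_f}(\bb{D})$.

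Next, I would set up the probabilistic construction by introducing a $\mathcal{B}$-valued random variable $X$ taking the value $M_f d_j$ with probability $\lambda_j$, so that $\bb{E}[X] = M_f h$ and $\|X\|_{\mathcal{B}} \leq M_f K_{\bb{D}}$ almost surely. Drawing $X_1,\dots,X_N$ i.i.d.\ copies of $X$, I would form the empirical average
\[
    f_N := \frac{1}{N}\sum_{i=1}^{N}X_i.
\]
Absorbing the sign into the atom whenever $d_j \in -\bb{D}$, each realization of $f_N$ is a linear combination of at most $N$ elements of $\bb{D}$ with coefficients of modulus $M_f/N$; hence $\sum_i |a_i| \leq M_f$ and $f_N \in \Sigma_{N,M_f}(\bb{D})$ surely, so any good realization automatically furnishes an admissible approximant.

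The core analytic step is a bound on $\bb{E}\|f_N - \bb{E}[X]\|_{\mathcal{B}}^{2}$ that exploits the type-2 property. Setting $Y_i := X_i - \bb{E}[X]$ yields i.i.d., mean-zero variables with $\|Y_i\|_{\mathcal{B}} \leq 2 M_f K_{\bb{D}}$ almost surely. Combining the standard symmetrization inequality with the definition of type 2 would give
\[
    \bb{E}\left\|\sum_{i=1}^{N}Y_i\right\|_{\mathcal{B}}^{2}
    \leq 4\,\bb{E}\left\|\sum_{i=1}^{N}\epsilon_i Y_i\right\|_{\mathcal{B}}^{2}
    \leq 4\,C_{2,\mathcal{B}}^{2}\sum_{i=1}^{N}\bb{E}\|Y_i\|_{\mathcal{B}}^{2}
    \leq 16\,C_{2,\mathcal{B}}^{2}\,N\, M_f^{2}\, K_{\bb{D}}^{2},
\]
with $(\epsilon_i)$ Rademacher variables independent of $(Y_i)$. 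Dividing by $N^{2}$, taking a square root, and invoking the probabilistic method would extract a realization $f_N^{\ast} \in \Sigma_{N,M_f}(\bb{D})$ with $\|f_N^{\ast} - \bb{E}[X]\|_{\mathcal{B}} \leq 4\, C_{2,\mathcal{B}}\, K_{\bb{D}}\, M_f\, N^{-1/2}$; the triangle inequality together with $\|f - \bb{E}[X]\|_{\mathcal{B}} < \varepsilon$ and $\varepsilon \to 0^{+}$ will then yield the stated bound.

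The main obstacle is the symmetrization step that converts an i.i.d.\ sum into a Rademacher sum, since the type-2 inequality is classically stated for the latter. The standard workaround is to introduce an independent copy $X'_i$ of $X_i$, write $Y_i = \bb{E}_{X'}[X_i - X'_i]$ and use Jensen to replace $Y_i$ by the symmetric $X_i - X'_i$, and then exploit that $X_i - X'_i$ has the same distribution as $\epsilon_i(X_i - X'_i)$. Propagating constants carefully through this substitution is precisely what produces the factor $4$ inside the final prefactor; everything else reduces to bookkeeping of the variation norm and of the dictionary bound $K_{\bb{D}}$.
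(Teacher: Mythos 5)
Your argument is correct and, constants included, reproduces the standard Maurey empirical-sampling proof (finite convex combination, i.i.d.\ sampling of rescaled atoms, symmetrization plus the type-2 inequality, and the probabilistic method), which is exactly the argument behind the result the paper imports: the paper does not prove Proposition~\ref{prop:approximation_type2} itself but cites it from \cite{Siegel22SharpBoundsApproximation}. Your bookkeeping checks out — symmetrization contributes a factor $2$, centering gives $\|Y_i\|_{\mathcal{B}}\le 2M_f K_{\bb{D}}$, yielding the prefactor $4\,C_{2,\mathcal{B}}K_{\bb{D}}M_f$, and each realization of the empirical average indeed lies in $\Sigma_{N,M_f}(\bb{D})$ — so there is nothing essential to add.
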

Potential extensions of Maurey's approximation result are discussed in \cite[Section 8]{DeVore98NonlinearApproximation}.
For later use, we state the following characterization of elements in 
$\mathcal{K}(\bb{D})$ in terms of representing measures on the dictionary $\bb{D}$.

\begin{proposition}[{\cite[Lemma 3]{Siegel23CharacterizationVariationSpaces}}]
\label{prop:variation_norm}
    Let $\mathcal{B}$ be a Banach space and suppose that $\bb{D}\subset\mathcal{B}$ is bounded.
    Then $f\in\mathcal{K}(\bb{D})$ if there
    exists a Borel measure $\mu$ on $\bb{D}$ such that
    \begin{align*}
        f=\int_{\bb{D}}i_{\bb{D}\to\mathcal{B}}\,d \mu.
    \end{align*}
    Moreover,
    \begin{align*}
        \norm{f}{\mathcal{K}(\bb{D})}
        =\inf\left\{\norm{\mu}{}:
        f=\int_\bb{D}i_{\bb{D}\to\mathcal{B}}\,d \mu\right\},
    \end{align*}
    where the infimum is taken over all Borel measures $\mu$
    defined on $\bb{D}$, and $\| \mu\|$ is the total variation of $\mu$. 
\end{proposition}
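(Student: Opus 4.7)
The strategy is to prove the identity
$\|f\|_{\mathcal{K}(\bb{D})}=\inf\{\|\mu\|:f=\int_{\bb{D}}i_{\bb{D}\to\mathcal{B}}\,d\mu\}$ by establishing the two inequalities separately; the inequality $\le$ also yields the sufficient condition for membership in $\mathcal{K}(\bb{D})$ stated in the proposition, while the reverse inequality requires constructing a near-optimal representing measure for every $f$ in the variation space.

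For the easier inequality $\|f\|_{\mathcal{K}(\bb{D})}\le\|\mu\|$, I would start with a finite signed Borel measure $\mu\ne 0$ on $\bb{D}$ representing $f$ and split it via the Jordan decomposition $\mu=\mu_+-\mu_-$, with $c_\pm:=\mu_\pm(\bb{D})$ and $\|\mu\|=c_++c_-$. Since $\bb{D}$ is bounded, $i_{\bb{D}\to\mathcal{B}}$ is a bounded $\mathcal{B}$-valued map, so each Bochner integral $\frac{1}{c_\pm}\int_{\bb{D}}i\,d\mu_\pm$ is a mean-value against a probability measure and hence lies in $\overline{\operatorname{conv}(\bb{D})}$ by the standard simple-function approximation for the Bochner integral. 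Rearranging,
\begin{equation*}
\frac{f}{\|\mu\|}=\frac{c_+}{\|\mu\|}\Bigl(\tfrac{1}{c_+}\!\!\int_{\bb{D}}\!i\,d\mu_+\Bigr)-\frac{c_-}{\|\mu\|}\Bigl(\tfrac{1}{c_-}\!\!\int_{\bb{D}}\!i\,d\mu_-\Bigr)
\end{equation*}
displays $f/\|\mu\|$ as a convex combination of an element of $\overline{\operatorname{conv}(\bb{D})}$ and one of $\overline{\operatorname{conv}(-\bb{D})}$, hence of $\overline{\operatorname{conv}(\pm\bb{D})}$. The definition of the variation norm then gives the bound.

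For the reverse inequality, set $c=\|f\|_{\mathcal{K}(\bb{D})}$ (the case $c=0$ is trivial, so assume $c>0$). Since $f/c\in\overline{\operatorname{conv}(\pm\bb{D})}$, I would pick a sequence of convex combinations $f_n=\sum_{j=1}^{k_n}\alpha_{j,n}h_{j,n}$ converging to $f/c$, with $h_{j,n}\in\bb{D}$ and $\sum_j|\alpha_{j,n}|\le 1$, and attach to each $f_n$ the atomic signed Borel measure $\nu_n=\sum_j\alpha_{j,n}\delta_{h_{j,n}}$; then $\|\nu_n\|\le 1$ and $f_n=\int i\,d\nu_n$. A weak-$\ast$ compactness argument (Banach-Alaoglu, or Prokhorov if one prefers the tightness viewpoint, applied to the unit ball of finite signed Borel measures on $\bb{D}$ equipped with the topology inherited from $\mathcal{B}$) produces a subsequential limit $\nu$ with $\|\nu\|\le 1$. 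Testing against an arbitrary $\phi\in\mathcal{B}^\ast$, the scalar function $\phi\circ i$ is bounded and continuous on $\bb{D}$, so
\begin{equation*}
\phi(f/c)=\lim_k\phi(f_{n_k})=\lim_k\!\int_{\bb{D}}\!\phi\circ i\,d\nu_{n_k}=\int_{\bb{D}}\!\phi\circ i\,d\nu=\phi\Bigl(\int_{\bb{D}}\!i\,d\nu\Bigr),
\end{equation*}
and Hahn-Banach forces $f/c=\int i\,d\nu$. Setting $\mu:=c\nu$ yields a representing measure with $\|\mu\|\le c$, as needed.

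The principal obstacle is the weak-$\ast$ compactness step in the reverse direction: it requires a topology on $\bb{D}$ rich enough to support a Riesz-type identification of (a part of) $C_b(\bb{D})^\ast$ with finite signed Borel measures and to make $\phi\circ i$ continuous. For the windowed-activation dictionaries $\bb{D}$ of this paper, where $\bb{D}$ is a continuous image of a Polish parameter domain, this is unproblematic; in full generality one may have to pass to a compactification of $\bb{D}$ and verify that no mass escapes to the added boundary points, a routine but technical maneuver typically handled by enlarging the dictionary to its closure in $\mathcal{B}$ and observing that closure does not change $\mathcal{K}(\bb{D})$.
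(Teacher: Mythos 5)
The paper does not prove this proposition at all—it is quoted verbatim from Siegel--Xu—so there is no in-paper argument to compare with; your proposal has to stand on its own. Its first half does: the Jordan decomposition of a representing measure, the fact that barycenters of probability measures over a bounded set lie in $\overline{\operatorname{conv}(\bb{D})}$, and the resulting display exhibiting $f/\|\mu\|$ as a convex combination of points of $\overline{\operatorname{conv}(\bb{D})}$ and $\overline{\operatorname{conv}(-\bb{D})}$ correctly give $\|f\|_{\mathcal{K}(\bb{D})}\le\|\mu\|$, hence the membership claim. Note this inequality is the only part of the proposition the paper actually uses (in the proofs of \cref{thm:approximation_sobolev_space_local} and \cref{thm:approximation_sobolev_space_G} only the bound $\|f\|_{\mathcal{K}(\tilde{\bb{D}})}\le\|\tilde{\mu}\|_{L^1}$ is invoked).

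The gap is in the reverse inequality, and it is not the ``routine'' technicality your last paragraph suggests. For a merely bounded, non-compact $\bb{D}$, the unit ball of finite signed Borel measures on $\bb{D}$ is not weak-$\ast$ compact in duality with $C_b(\bb{D})$: Banach--Alaoglu produces a limit in $C_b(\bb{D})^{\ast}$, which in general is only a finitely additive object (equivalently, a measure on a compactification), and Prokhorov needs tightness of the $\nu_n$, which the hypotheses do not provide. Replacing $\bb{D}$ by its norm closure in $\mathcal{B}$ does not help, since bounded norm-closed sets in infinite-dimensional spaces are not compact. In fact the equality as stated can fail for bounded dictionaries: in $\ell^2$ take $\bb{D}=\{e_1+e_n:\,n\ge 2\}$, a bounded, closed, $\sqrt{2}$-separated (hence discrete) set. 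Then $\frac1N\sum_{n=2}^{N+1}(e_1+e_n)\to e_1$ in norm, so $e_1\in\overline{\operatorname{conv}(\pm\bb{D})}$ and $\|e_1\|_{\mathcal{K}(\bb{D})}\le 1$; but any finite Borel measure on $\bb{D}$ is atomic, $\mu=\sum_{n}\mu_n\delta_{e_1+e_n}$, and $\int_{\bb{D}} i\,d\mu=\bigl(\sum_n\mu_n\bigr)e_1+\sum_n\mu_n e_n=e_1$ would force $\mu_n=0$ for all $n$ together with $\sum_n\mu_n=1$, which is impossible; so no representing measure exists and the infimum is $+\infty$. Consequently the reverse inequality cannot be proved from boundedness alone: one needs an extra hypothesis such as compactness of $\bb{D}$ (under which your Riesz/Alaoglu step is exactly right, since then $C(\bb{D})^{\ast}=M(\bb{D})$ and no mass can escape) or tightness of the approximating atomic measures; for the non-compactly parametrized dictionaries of this paper such a verification would be a genuine additional argument, not a maneuver.
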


\section{Embedding Results for Modulation Spaces}
\label{subsec:ConvergenceInBochnerSobolevNorms}
In this section, we present the embedding results between Sobolev spaces and modulation spaces that will be required for the neural approximation analysis developed in the subsequent section.
\begin{remark}
    In a finite-dimensional space all the norms are equivalent. In particular, for every $q\in[1,\,\infty]$,  we have
    \begin{align*}
         \|f\|_{W^{n,q}(\Omega)} 
         &= \left( \sum_{|\alpha|\leq n} \|\partial^\alpha f\|_{L^q(\Omega)}^q \right)^{\frac{1}{q}} = \left\| \left( \|\partial^\alpha f\|_{L^q(\Omega)} \right)_{|\alpha|\leq n} \right\|_{\ell^q} \\
         &\asymp \left\| \left( \|\partial^\alpha f\|_{L^q(\Omega)} \right)_{|\alpha|\leq n} \right\|_{\ell^1} = \sum_{|\alpha|\leq n} \|\partial^\alpha f\|_{L^q(\Omega)}.
    \end{align*}
In view of the norm equivalence, we will use whichever definition is appropriate in context. 
\end{remark}

\begin{proposition}\label{prop:SobolevModulation_embedding}
Consider $d\in \nn{}$, $n\in \zzp{}$, $2\leq r<\infty$,   and $1\leq s,q\leq2$ such that 
\begin{equation}\label{indici}
		\frac1{r'}+1=\frac1s+\frac1q.
\end{equation} 
\begin{center}
\begin{tikzpicture}[scale=5, >=latex]
    \draw[->, thick] (0, 0) -- (1.2, 0) node[right] {$1/s$};
    \draw[->, thick] (0, 0) -- (0, 1.2) node[above] {$1/q$};
    \draw[dashed, gray!50] (0,0) grid (1.1,1.1);
    
    \foreach \x in {0.5, 1} \draw (\x, 0.02) -- (\x, -0.02) node[below] {\x};
    \foreach \y in {0.5, 1} \draw (0.02, \y) -- (-0.02, \y) node[left] {\y};
    
    \fill[blue!20] (0.5, 1) -- (1, 0.5) -- (1, 1) -- cycle;
    \draw[blue, thick] (0.5, 1) -- (1, 0.5) node[midway, below, align=center, rotate=-45, black] {\footnotesize $\frac{1}{s} + \frac{1}{q} = 1.5$};
    \draw[blue, thick] (1, 0.5) -- (1, 1);
    \draw[blue, thick] (1, 1) -- (0.5, 1);
    
    \draw[dashed, black] (0.5, 0.5) rectangle (1, 1);
    
    \fill[red] (0.5, 1) circle (0.3pt) node[above left, black] {$(r=2)$};
    \fill[red] (1, 0.5) circle (0.3pt) node[below right, black] {$(r=2)$};
    \fill[red] (1, 1) circle (0.3pt) node[above right, black] {$(r=\infty)$};
    
    \node[align=center] at (0.5, -0.25) {\textbf{Admissible Region}\\ $2 \le r < \infty$};
\end{tikzpicture}
\end{center}
    Let $U\subset \rr{d}$  be a bounded and measurable set with non-empty interior. If $f\in M^{p,q}_{1\otimes v_n}(\rr{d})$, then we have  
    $$
        \norm{f}{W^{n,r}(\Omega)}\leq C_{d,n}|\Omega|^{-d/p}\|\chi_\Omega\|_{\mathscr{F}L^s(\rr{d})}\| f\|_{M^{p,q}_{1\otimes v_n}(\rr{d})},\quad 0< p\leq\infty.
    $$
\end{proposition}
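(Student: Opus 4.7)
The plan is to prove the estimate in three steps: a reduction to $L^r$ bounds on each derivative of $f$, a transition to the frequency side via Hausdorff--Young and Young's convolution inequality, and a final conversion from a Fourier--Lebesgue bound to the modulation-space bound, which is where the volume factor $|\Omega|^{-d/p}$ enters.

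First, by the equivalence of norms on the finite-dimensional index set $\{\alpha:|\alpha|\le n\}$ noted in the preceding remark, one has $\|f\|_{W^{n,r}(\Omega)}\le C_{d,n}\sum_{|\alpha|\le n}\|\partial^\alpha f\|_{L^r(\Omega)}$, and for each $\alpha$ I rewrite $\|\partial^\alpha f\|_{L^r(\Omega)} = \|(\partial^\alpha f)\chi_\Omega\|_{L^r(\rr{d})}$. Since $r\ge 2$ implies $r'\le 2$, the Hausdorff--Young inequality gives
\[
\|(\partial^\alpha f)\chi_\Omega\|_{L^r(\rr{d})}\le \|\mathscr{F}((\partial^\alpha f)\chi_\Omega)\|_{L^{r'}(\rr{d})}.
\]

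Second, the convolution theorem turns the right-hand side into $\|\widehat{\partial^\alpha f}\conv\widehat{\chi_\Omega}\|_{L^{r'}}$, and Young's convolution inequality $L^q\conv L^s \hookrightarrow L^{r'}$, whose scaling condition is exactly \eqref{indici}, yields
\[
\|\widehat{\partial^\alpha f}\conv\widehat{\chi_\Omega}\|_{L^{r'}}\le \|\widehat{\partial^\alpha f}\|_{L^q}\,\|\chi_\Omega\|_{\mathscr{F}L^s(\rr{d})}.
\]
Using $|\widehat{\partial^\alpha f}(\xi)|=(2\pi)^{|\alpha|}|\xi^\alpha||\hat f(\xi)|\le (2\pi)^n v_n(\xi)|\hat f(\xi)|$ for $|\alpha|\le n$, the first factor on the right is bounded by $(2\pi)^n\|v_n\hat f\|_{L^q}=(2\pi)^n\|f\|_{\mathscr{F}L^q_{v_n}}$. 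Summing over $\alpha$ gives
\[
\|f\|_{W^{n,r}(\Omega)}\le C_{d,n}\,\|\chi_\Omega\|_{\mathscr{F}L^s(\rr{d})}\,\|f\|_{\mathscr{F}L^q_{v_n}(\rr{d})}.
\]

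Third, I convert $\|f\|_{\mathscr{F}L^q_{v_n}}$ into a modulation-space norm, which is where the factor $|\Omega|^{-d/p}$ appears. The tool is the STFT averaging identity $\hat f(\omega)=\big(\int\bar g(t)\,dt\big)^{-1}\int V_g f(x,\omega)\,dx$, valid for any window $g\in\mathscr{S}(\rr{d})$ with $\int\bar g\neq 0$. Taking absolute values, applying Minkowski's integral inequality in $\omega$ with the weight $v_n$, and then passing from the $L^1_x$ norm of $V_g f(\cdot,\omega)$ to its $L^p_x$ norm — by a H\"older/interpolation step tuned to a domain-adapted window normalization — produces the stated $|\Omega|$-dependent multiplier in front of $\|f\|_{M^{p,q}_{1\otimes v_n}}$.

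The main obstacle is this third step. A direct H\"older estimate fails globally because $V_g f(\cdot,\omega)$ is not compactly supported in $x$, so the $L^1_x$-to-$L^p_x$ passage is not free. To close the step one must exploit the freedom in choosing the window $g$ (picking one adapted to $\Omega$ so that the spatial footprint of $V_g f$ can be controlled) or use a cutoff-and-tail argument together with the inversion machinery of Theorem~\ref{C2invfmod}; the quasi-Banach regime $0<p<1$ additionally requires replacing triangle inequalities with $p$-norm analogues. The remaining ingredients — Hausdorff--Young, Young's convolution, and the pointwise bound $|\xi|^n\le v_n(\xi)$ — are classical and contribute only the explicit constant $C_{d,n}$.
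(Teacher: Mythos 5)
Your Steps 1--2 are sound and in fact mirror the paper's mechanism (Hausdorff--Young followed by a product/convolution estimate whose exponents are exactly \eqref{indici}); the paper simply performs that product step on the modulation side, via $\|\chF{\Omega}^\epsilon\partial^\alpha f\|_{M^{p,r'}}\lesssim\|\chF{\Omega}^\epsilon\|_{M^{\infty,s}}\|\partial^\alpha f\|_{M^{p,q}}$, rather than via Young's inequality on the Fourier side. The genuine gap is Step 3, and it is not merely incomplete: the inequality you are trying to establish there, $\|f\|_{\mathscr{F}L^q_{v_n}(\rr{d})}\lesssim |\Omega|^{-d/p}\,\|f\|_{M^{p,q}_{1\otimes v_n}(\rr{d})}$ for general $f$ in the modulation space, is false. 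For large $p$ the Fourier transform of an element of $M^{p,q}_{1\otimes v_n}$ need not be a function at all (e.g.\ $f\equiv 1\in M^{\infty,1}$ has $\hat f=\delta$), so $\|f\|_{\mathscr{F}L^q_{v_n}}$ can be infinite while the right-hand side is finite; and even for Schwartz $f$, no constant independent of the spatial spread of $f$ can work, since $\|V_g f(\cdot,\omega)\|_{L^p_x}$ becomes arbitrarily small relative to $|\hat f(\omega)|$ as $f$ is translated or dilated away, whereas $|\Omega|$ is fixed. The same obstruction already poisons your Step 2 for general $f$ in the claimed class: writing $\mathscr{F}\bigl((\partial^\alpha f)\chi_\Omega\bigr)=\widehat{\partial^\alpha f}\conv\widehat{\chi_\Omega}$ and invoking Young requires $\widehat{\partial^\alpha f}\in L^q$, which is not available from the hypothesis $f\in M^{p,q}_{1\otimes v_n}$ when $p$ is large. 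So your route proves a Fourier--Lebesgue variant (with $\|f\|_{\mathscr{F}L^q_{v_n}}$ on the right), not the stated modulation-space bound for all $0<p\le\infty$.

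The fix is the one the paper uses: the volume factor can only be harvested from a \emph{compactly supported} function, so the localization must stay attached before passing to norms of $\hat f$. Concretely, mollify the cutoff, $\chF{\Omega}^\epsilon=\chi_\Omega\conv\rho_\epsilon$, and for the compactly supported product use the local equivalence of Fourier--Lebesgue and modulation norms (a window $g\in\mathcal{C}_c^\infty$ with $g\equiv1$ on a large ball gives $\hat h(\xi)\chF{\Omega}^\epsilon(x)=V_g h(x,\xi)\chF{\Omega}^\epsilon(x)$, whence $\|\mathscr{F}(\chF{\Omega}^\epsilon\partial^\alpha f)\|_{L^{r'}}\le|\Omega_\epsilon|^{-1/p}\|\chF{\Omega}^\epsilon\partial^\alpha f\|_{M^{p,r'}}$ --- this is where the volume factor legitimately enters). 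Then apply the modulation-space multiplication estimate $M^{\infty,s}\cdot M^{p,q}\hookrightarrow M^{p,r'}$ under \eqref{indici}, bound $\|\chF{\Omega}^\epsilon\|_{M^{\infty,s}}\le\|\chi_\Omega\|_{\mathscr{F}L^s}$, use $\|\partial^\alpha f\|_{M^{p,q}}\le\|f\|_{M^{p,q}_{1\otimes v_n}}$, sum over $|\alpha|\le n$, and let $\epsilon\to0$. Your suggested repairs (an $\Omega$-adapted window or a cutoff-and-tail argument applied globally to $f$) do not address this, because the obstruction is structural: without compact support of the object whose Fourier transform you measure, the $L^1_x$-to-$L^p_x$ passage with an $|\Omega|^{1/p}$ factor has no reason to hold.
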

\begin{proof}
	We  use the fact that 
    $$
    \norm{f}{W^{n,r}(\Omega)}= \sum_{|\alpha|\leq n} \|\partial^\alpha f\|_{L^q(\Omega)}=\sum_{|\alpha|\leq n} \|\chi_\Omega\partial^\alpha f\|_{L^q(\rr d)}.
    $$
	Following \cite[Lemma 2.4 and Definition 3.1]{Tartar07IntroductionSobolevSpaces},
	for any $0<\epsilon <1$ , we define the smoothing sequence:
	\begin{align*}
		\rho_\epsilon(x):=
		\frac{1}{\epsilon^d\|\phi\|_{L^1}}\phi
		\left(\frac{x}{\epsilon}\right)\quad
		\text{with}
		\quad\phi(x)=\exp\left(-\frac{1}{1-|x|^2}\right)\chi_{B_1(0)}(x),
	\end{align*}
	where $B_1(0)$ is the closed unit ball.
	Thus, \(\rho_\epsilon\in C_c^\infty(\rr d)\) with  $$\operatorname{supp}\rho_\epsilon =\overline{B_\epsilon(0)},\quad \|\rho_\epsilon\|_{L^1}=1
		\quad\text{and}\quad
		\|\rho_\epsilon\|_{L^2}
		\leq \frac{1}{\epsilon^{d/2}}.$$
The last inequality follows by substitution in multiple variables and the fact that \(\rho_1(x)<1\), for all \(x\in\rr{d}\).
	For a domain $\Omega\subset \rr{d}$ we define the smoothed characteristic function of \(\Omega\) as
	\begin{align*}
		\chi_{\Omega}^\epsilon:=\chi_{\Omega} \conv \rho_{\epsilon}.
	\end{align*}
	Observe that
	$$
        \operatorname{supp}\chi_{\Omega}^\epsilon\subseteq \overline{\operatorname{supp}\chi_\Omega+\operatorname{supp}\rho_\epsilon}\subseteq {\Omega}_\epsilon,
    $$
	where 
    $${\Omega}_\epsilon:=\{x\in \rr d: \exists y\in\Omega\,\,  \mbox{such\,that}\, |x-y|\leq \epsilon\}.$$
	It was shown in 
	\cite[Proposition A.2]{Abdeljawad23SpaceTimeApproximationShallow}   that
	
	\begin{align*}
		\lim_{\epsilon\to0}\norm{\chEF{\Omega}{\epsilon}h}{L^{r}}
		=\norm{\chF{\Omega}h}{L^{r}},
	\end{align*}
	for any $h:\rrd\to\rr{}$  locally in $L^r$.
		
	Now, by the Hausdorff-Young inequality,  for every $|\alpha|\leq n$,
	$$\|\chF{\Omega}^\epsilon\partial^\alpha f\|_{L^r}=\|\mathscr{F}^{-1}\mathscr{F}(
	\chF{\Omega}^\epsilon\partial^\alpha f)\|_{L^r}\leq \|\mathscr{F}(
	\chF{\Omega}^\epsilon\partial^\alpha f)\|_{L^{r'}}.$$
	For compactly supported functions, the $M^{p,q}$-norm is equivalent to the $\mathscr{F}L^q$-norm, see, e.g., \cite[Proposition 2.3.26]{Cordero20TimeFrequencyAnalysisOperators}. In detail, let $R>0$ such that $\Omega_\epsilon\subset B_R(0)$,  and consider   a window $g\in\mathcal{C}_c^\infty(\rr d)$ such that $g=1$
	on $B_{2R}(0)$. Then we have
	$$\hat{h}(\xi) \chF{\Omega}^\epsilon (x)=V_g h(x,\xi)\chF{\Omega}^\epsilon(x)$$
	so that 
	$$
        \|\mathscr{F}(\chF{\Omega}^\epsilon\partial^\alpha f)\|_{L^{r'}}\leq |\Omega_\epsilon|^{-1/p}\|\chF{\Omega}^\epsilon\partial^\alpha f\|_{M^{p,r'}}.
    $$	
	Using the multiplication properties for 
			 modulation spaces, cf.  \cite[Proposition 2.4.23]{Cordero20TimeFrequencyAnalysisOperators}, with the index relations
		$$
             \frac{1}{p}=\frac{1}{\infty}+\frac1p,\quad \frac{1}{r'}+1=\frac1s+\frac1q,
        $$
			 (notice that this implies $1\leq s,q\leq 2$) we obtain the bound 
			$$\|\chF{\Omega}^\epsilon\partial^\alpha f\|_{M^{p,r'}}\lesssim\|\chF{\Omega}^\epsilon\|_{M^{\infty,s}}\|\partial^\alpha f\|_{M^{p,q}}.$$
Since $\chF{\Omega}^\epsilon$ is compactly supported, we use the result in \cite[Proposition 2.3.26]{Cordero20TimeFrequencyAnalysisOperators} which gives
			$$\|\chF{\Omega}^\epsilon\|_{M^{\infty,s}}\leq  \|\chF{\Omega}^\epsilon\|_{\mathscr{F} L^s}$$
			and, as already observed in  
			\cite{Abdeljawad23SpaceTimeApproximationShallow}  (see formula (2.10), 
			$$\|\mathscr{F}(\chi_\Omega \ast\rho_\epsilon)\|_{L^s} \leq \|\mathscr{F}(\chi_\Omega )\|_{L^s}\|\mathscr{F}(\rho_\epsilon)\|_{L^1}=\|\mathscr{F}(\chi_\Omega )\|_{L^s}. $$
			Finally,
			$$\|\partial^\alpha f\|_{M^{p,q}}\leq \| f\|_{M^{p,q}_{1\otimes v_{|\alpha|}}},$$
			see, e.g.,  \cite[Theorem 2.3.14]{Cordero03TimeFrequencyanalysis},
			and the inclusion relations for modulation spaces (see Proposition 2.4.18 in \cite{Cordero20TimeFrequencyAnalysisOperators})
			give
			$$ \| f\|_{M^{p,q}_{1\otimes v_{|\alpha|}}}\leq C\| f\|_{M^{p,q}_{1\otimes v_n}},\quad \forall\,\alpha\in \zzp{d} \text{ such that }|\alpha|\leq n.$$
			To sum up,
			\begin{align*}\norm{f}{W^{n,r}(\Omega)}&= \sum_{|\alpha|\leq n} \|\chi_\Omega\partial^\alpha f\|_{L^q(\rr d)}\leq \|\mathscr{F}(\chi_U)\|_{L^s}\sum_{|\alpha|\leq n}\lim_{\epsilon\to 0}|\Omega_\epsilon|^{-d/p}\| f\|_{M^{p,q}_{1\otimes v_n}}\\
				&\leq C_{d,n}|\Omega|^{-d/p}\|\mathscr{F}(\chi_U)\|_{L^s}\| f\|_{M^{p,q}_{1\otimes v_n}}.
				\end{align*}
			This concludes the proof.
\end{proof}

\begin{remark}
		Observe that from \cref{indici} and the fact that $2\leq r<\infty$ we infer $1\leq s,q\leq 2$. Furthermore, we refer to Section 3.1.1 in \cite{Abdeljawad23SpaceTimeApproximationShallow} for the structure of the domain $\Omega$ and the degree $s$ such that $\chi_{\Omega}\in \mathscr{F}L^s(\rr{})$. For example, in dimension $d=1$, let $\Omega=[-1/2,1/2]$, then $$\mathscr{F}\chi_U=\frac1{\sqrt{2\pi}} \mbox{sinc} (\cdot/2)\in\mathscr{F}L^s(\rr{}),$$ for every $s>1$. 
\end{remark}

Note that, by the inclusion relations for modulation spaces with general weight functions (see, e.g., Theorem 2.4.17 in \cite{Cordero20TimeFrequencyAnalysisOperators}), we can generalize the result in Proposition \ref{prop:SobolevModulation_embedding} to any weight $m$ such that
\begin{equation}\label{eq:weight_ellipticity}
 v_n(\xi)\leq C\,m(x,\xi),\quad (x,\xi)\in\rr {2d}.
\end{equation}

\begin{corollary}
Assume the same hypotheses as in \cref{prop:SobolevModulation_embedding}.
If in addition the weight $m$ satisfies \cref{eq:weight_ellipticity}, then 
    $$ \norm{f}{W^{n,r}(\Omega)}
        \leq C_{d, n} 
      |\Omega|^{-d/p} \norm{{\chF{\Omega}}}{\FL{s}(\rr{d})}
        \norm{f}{M_m^{p,q}(\rr{d})}.$$
\end{corollary}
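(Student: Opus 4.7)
The plan is to combine \Cref{prop:SobolevModulation_embedding} with a simple weight-monotonicity argument for modulation spaces. First, I would invoke \Cref{prop:SobolevModulation_embedding} to obtain the intermediate estimate
\[
\|f\|_{W^{n,r}(U)} \leq C_{d,n}\,|\Omega|^{-d/p}\,\|\chi_U\|_{\mathscr{F}L^s(\rr{d})}\,\|f\|_{M^{p,q}_{1\otimes v_n}(\rr{d})},
\]
which is valid under the stated hypotheses on $d$, $n$, $r$, $s$, $q$ and $U$. So the only remaining task is to replace the reference weight $1\otimes v_n$ by the general weight $m$ satisfying \eqref{eq:weight_ellipticity}.

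Next, I would argue that the pointwise bound $v_n(\xi)\leq C\,m(x,\xi)$ implies the continuous embedding
\[
M^{p,q}_m(\rr{d}) \hookrightarrow M^{p,q}_{1\otimes v_n}(\rr{d}),
\]
with $\|f\|_{M^{p,q}_{1\otimes v_n}} \leq C\,\|f\|_{M^{p,q}_m}$. This is a direct consequence of the monotonicity of the mixed-norm $L^{p,q}$ integral defining the modulation norm: for a fixed nonzero Schwartz window $g$,
\[
\|V_g f\cdot(1\otimes v_n)\|_{L^{p,q}(\rr{2d})}
\leq C\,\|V_g f\cdot m\|_{L^{p,q}(\rr{2d})},
\]
because multiplying the integrand pointwise by a larger positive function increases the norm. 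The fact that this is independent (up to equivalence) of the choice of window is part of the standard theory of modulation spaces with $v$-moderate weights (see Theorem~2.4.17 in \cite{Cordero20TimeFrequencyAnalysisOperators}, as already indicated in the discussion preceding the corollary).

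Combining the two estimates yields
\[
\|f\|_{W^{n,r}(U)} \leq C_{d,n}\,|\Omega|^{-d/p}\,\|\chi_U\|_{\mathscr{F}L^s(\rr{d})}\,\|f\|_{M^{p,q}_m(\rr{d})},
\]
where the constant $C_{d,n}$ absorbs the multiplicative constant from the embedding. There is no genuine obstacle here: the corollary is essentially a rephrasing of \Cref{prop:SobolevModulation_embedding} through the weight-monotonicity property, and the only point to be careful about is to ensure that the constant $C$ arising in \eqref{eq:weight_ellipticity} is absorbed cleanly into the final constant $C_{d,n}$, and that the moderateness class of $m$ guarantees that the STFT-based definition of $M^{p,q}_m$ is window-independent so that the monotonicity step is legitimate.
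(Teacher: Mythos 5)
Your proposal is correct and matches the paper's intended argument: the corollary is obtained by applying \Cref{prop:SobolevModulation_embedding} and then using the inclusion $M^{p,q}_m(\rr{d})\hookrightarrow M^{p,q}_{1\otimes v_n}(\rr{d})$, which follows from the weight comparison \eqref{eq:weight_ellipticity} exactly as in the monotonicity/inclusion relations the paper cites (Theorem~2.4.17 in \cite{Cordero20TimeFrequencyAnalysisOperators}). No gaps; your explicit pointwise-monotonicity justification is just a spelled-out version of that cited embedding.
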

The extension to weights with sub exponential or exponential growth is also possible using  Gelfand-Shilov spaces as window classes and more general modulation spaces contained in their duals, cf. \cite{Teofanov06ModulationSpacesGelfandShilov}.

\begin{proposition}
 Under the assumptions of Proposition \ref{prop:SobolevModulation_embedding}, if, in addition,  $f\in M^{p,q}_n(\rr{d})$ is a bandlimited function with $\operatorname{supp}\hat{f}=K\subset \rr d$, then
	$$\norm{f}{W^{n,r}(\Omega)}\leq C_{K,n}|\Omega|^{-d/p}\|\chi_U\|_{\mathscr{F}L^s(\rr{d})}\| f\|_{M^{p,q}(\rr{d})},$$
 for every $0< p\leq\infty$, and   with $q,r,s$ satisfying \cref{indici}.\end{proposition}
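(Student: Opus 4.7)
The plan is to mimic the proof of \cref{prop:SobolevModulation_embedding} verbatim, and only replace the single step where the frequency weight $v_n$ is used by a Bernstein-type estimate that exploits the compact spectrum of $f$. More precisely, the previous proof produces the chain
\[
\|\chi_\Omega^{\epsilon}\,\partial^{\alpha} f\|_{L^r}
\;\le\;\|\mathscr F(\chi_\Omega^{\epsilon}\partial^{\alpha} f)\|_{L^{r'}}
\;\lesssim\;|\Omega_\epsilon|^{-1/p}\|\chi_\Omega^{\epsilon}\partial^\alpha f\|_{M^{p,r'}}
\;\lesssim\;|\Omega_\epsilon|^{-1/p}\|\chi_\Omega^{\epsilon}\|_{M^{\infty,s}}\|\partial^\alpha f\|_{M^{p,q}}
\]
via Hausdorff–Young, the $M^{p,r'}\!\asymp\!\mathscr F L^{r'}$ equivalence for compactly supported functions, and the modulation-space product relation driven by \eqref{indici}. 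Everything through here goes through unchanged, and moreover $\|\chi_\Omega^{\epsilon}\|_{M^{\infty,s}}\le\|\chi_\Omega\|_{\mathscr F L^s}$ by the Young-type argument of \cref{prop:SobolevModulation_embedding}.

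The new ingredient is a Bernstein-type estimate in modulation spaces. Since $\mathrm{supp}\,\hat f = K$ is compact, I would pick $\phi\in\mathcal C_c^\infty(\rr{d})$ with $\phi\equiv 1$ on $K$ and define
\[
h_\alpha \;:=\; \mathscr F^{-1}\bigl[(2\pi i\,\xi)^{\alpha}\phi(\xi)\bigr]\;\in\;\mathscr S(\rr{d}).
\]
Then $\widehat{\partial^{\alpha} f}(\xi) = (2\pi i\,\xi)^{\alpha}\hat f(\xi) = (2\pi i\,\xi)^{\alpha}\phi(\xi)\,\hat f(\xi)$, so $\partial^{\alpha} f = h_\alpha * f$. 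Invoking the convolution/Young inequality for modulation spaces $M^{1}*M^{p,q}\hookrightarrow M^{p,q}$ (see, e.g., \cite[Proposition 2.4.19]{Cordero20TimeFrequencyAnalysisOperators}), we obtain
\[
\|\partial^{\alpha} f\|_{M^{p,q}}\;=\;\|h_\alpha * f\|_{M^{p,q}}\;\lesssim\;\|h_\alpha\|_{M^{1}}\,\|f\|_{M^{p,q}}\;=:\;C_{K,\alpha}\,\|f\|_{M^{p,q}},
\]
with a constant depending only on $K$ (through the chosen cutoff $\phi$) and on $|\alpha|$.

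Finally, summing over $|\alpha|\le n$, taking $\epsilon\to 0$ (so that $|\Omega_\epsilon|\to|\Omega|$ and $\|\chi_\Omega^\epsilon \partial^\alpha f\|_{L^r}\to\|\chi_\Omega\partial^\alpha f\|_{L^r}$ exactly as in the proof of \cref{prop:SobolevModulation_embedding}), and setting $C_{K,n}:=C_{d,n}\max_{|\alpha|\le n}C_{K,\alpha}$, one arrives at
\[
\|f\|_{W^{n,r}(\Omega)}\;\le\;C_{K,n}\,|\Omega|^{-d/p}\,\|\chi_{U}\|_{\mathscr F L^{s}(\rr{d})}\,\|f\|_{M^{p,q}(\rr{d})}.
\]

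The main obstacle I foresee is the Bernstein step: one must ensure that $h_\alpha$ really lies in a function space suitable for a convolution estimate on $M^{p,q}$ for the whole range $0<p\le\infty$. For $p,q\ge 1$ the statement $M^1*M^{p,q}\hookrightarrow M^{p,q}$ is standard and $h_\alpha\in\mathscr S\hookrightarrow M^1$ suffices; for $0<p<1$ (the quasi-Banach regime) a little extra care is needed — one uses that $h_\alpha\in\mathscr S\hookrightarrow M^{p}_{v_s}$ for every $s\ge 0$ and invokes a weighted/quasi-Banach Young inequality — but in every case the resulting constant depends only on $\phi$ (hence on $K$) and on $|\alpha|$, which is all that is required.
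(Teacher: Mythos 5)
Your argument is correct, but it takes a genuinely different route from the paper at the one step that is specific to bandlimited functions, namely the bound $\|\partial^\alpha f\|_{M^{p,q}}\le C_{K,n}\|f\|_{M^{p,q}}$. You realize $\partial^\alpha f=h_\alpha * f$ with $h_\alpha=\mathscr F^{-1}\bigl[(2\pi i\,\xi)^\alpha\phi(\xi)\bigr]\in\mathscr S(\rr d)$ and conclude by a single convolution (Young-type) inequality for modulation spaces, whereas the paper works with the equivalent norm given by the frequency-uniform decomposition $\Box_k=\mathscr F^{-1}\sigma_k\mathscr F$: since $\operatorname{supp}\hat f=K$ is compact only finitely many blocks $\Box_k f$ are nonzero, and on each block the multiplier $T_{K,\alpha}h=\Phi_{K,\alpha}*h$ with $\Phi_{K,\alpha}=\mathscr F^{-1}(\xi^\alpha\phi_K)\in\mathscr S$ is bounded on $L^p$ via $\|\Phi_{K,\alpha}\|_{L^1}\|h\|_{L^p}$, yielding $\|\Box_k\partial^\alpha f\|_{L^p}\le C_{K,n}\|\Box_k f\|_{L^p}$. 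The two mechanisms buy slightly different things: the paper's blockwise argument is elementary (classical Young in $L^p$) but, as written, that Young step is stated for $1\le p\le\infty$, so covering $0<p<1$ would require a Plancherel--P\'olya--Nikolskii-type inequality on the compactly frequency-supported blocks; your approach bypasses the equivalent-norm machinery entirely and works at the level of the modulation norm in one stroke, at the price of needing the quasi-Banach convolution relations when $0<p<1$ --- exactly the caveat you flag, and it is harmless here because $h_\alpha\in\mathscr S$ lies in every $M^{p_0,q_0}_{v_s}$, so the weighted quasi-Banach Young inequality (e.g.\ in the spirit of \cite{Guo18SharpWeightedConvolution} or Galperin--Samarah) applies with a constant depending only on $K$ and $|\alpha|$. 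Note also that your range is consistent with the statement: by \eqref{indici} one has $1\le q\le 2$, so the quasi-Banach issue indeed only concerns $p$. The remaining steps (Hausdorff--Young, the local $M^{p,r'}\asymp\mathscr F L^{r'}$ equivalence, the product estimate with $\|\chi_\Omega^\epsilon\|_{M^{\infty,s}}\le\|\chi_\Omega\|_{\mathscr F L^s}$, and the limit $\epsilon\to0$) coincide with the paper's.
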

\begin{proof}
The first part goes as the proof of Proposition \ref{prop:SobolevModulation_embedding}. 
We will show that  
$$\|\partial^\alpha f\|_{M^{p,q}(\rr{d})}
\leq
C_n \|f\|_{M^{p,q}(\rr{d})},\quad \alpha\in \zzp{d}\text{ such that }|\alpha|\leq n.
$$
 
For $k\in\zz{d}$, consider  the frequency-uniform decomposition operator by
	\begin{equation*}
	\Box_k:= \mathscr{F}^{-1} \sigma_k \mathscr{F},
	\end{equation*}
    where $\{\sigma_k\}_k$ is a smooth partition of unity.

The previous operator allows to introduce an equivalent norm on the  modulation spaces $M^{p,q}(\rr d)$, as follows, cf. Definition 2.3.24  and Proposition 2.3.25 in \cite{Cordero20TimeFrequencyAnalysisOperators},
	\begin{equation*}
	\|f\|_{{M}^{p,q}(\rr d)}=\left(\sum_{k\in \zz{d}} \|\Box_k f\|^q_{L^p}\right)^\frac 1q,\quad f\in\mathscr{S}'(\rr d),
	\end{equation*}
	with obvious modification for $q=\infty$.
    
Now, if $\hat{f}$ has compact support $K\subset \rr d$, the sum above is finite. 
Note that, for every  $\alpha\in \zzp{d}$ such that $|\alpha|\leq n$, we have
$$
\operatorname{supp}\mathscr{F}(\partial^\alpha f)\subseteq\operatorname{supp}\xi^\alpha\mathscr{F}f\subseteq \operatorname{supp}\mathscr{F}{f}=K.
$$
We compute
    $$\|\partial^\alpha f\|_{M^{p,q}}=\left(\sum_{finite} \|\Box_k \partial^\alpha f\|^q_{L^p}\right)^\frac 1q.
    $$
Observe that
    \begin{align*}
        \|\Box_k \partial^\alpha f\|_{L^p}=\|\mathscr{F}^{-1}\xi^\alpha \phi_K \mathscr{F}\mathscr{F}^{-1}\sigma_k\mathscr{F}f\|_{L^p}\leq  \|\mathscr{F}^{-1}\sigma_k\mathscr{F}f\|_{L^p}\leq C_{K,n}\|\Box_k  f\|_{L^p},
    \end{align*}
    where $\phi_K\in\mathcal{C}_c^\infty(\rr d)$  such that $\phi_k(\xi)=1,$ for every $\xi\in K.$  The multiplier  $$T_{K,\alpha}h:=\mathscr{F}^{-1}(\xi^\alpha \phi_K) \mathscr{F}h=\Phi_{K,\alpha}\ast h,$$
    where $\Phi_{K,\alpha}:=\mathscr{F}^{-1}(\xi^\alpha \phi_K )\in\mathscr{S}(\rr d)$, 
     is  bounded on every $L^p(\rr d)$,  $1\leq p\leq\infty$,
     with
     $$
     \|T_{K,\alpha}h\|_{L^p}\leq \|\Phi_{K,\alpha}\|_{L^1}\|h\|_{L^p}\leq C_{K,n}\|h\|_{L^p},
     $$
     for every  $\alpha \in \zzp{d}$
     such that  $|\alpha|\leq n.$
    Hence,
    $$
        \|\partial^\alpha f\|_{M^{p,q}}=\left(\sum_{finite} \|\Box_k \partial^\alpha f\|^q_{L^p}\right)^\frac 1q
        \leq C_{K,n} \left(\sum_{finite} \|\Box_k f\|^q_{L^p}\right)^\frac 1q\asymp\| f\|_{M^{p,q}}
    $$
which gives the desired result.\end{proof}

\begin{lemma}
  Let $\Omega\subset \rr{d}$  be a bounded and measurable set with non-empty interior.  Consider $0<p,q\leq \infty$, $s\in\rr{}$. If $\chi_\Omega f\in M^{p,q}_{1\otimes v_s}(\rr{d})$,  then $\chi_\Omega f\in \mathscr{F}L^q_{v_s}(\rr{d})$ with 
    \begin{equation*}
        \|\chi_\Omega f\|_{\mathscr{F}L^q_{v_s}(\rr{d})}
        \leq
        |\Omega|^{-1/p} \|\chi_\Omega f\|_{ M^{p,q}_{1\otimes v_s}(\rr{d})}.
    \end{equation*}    
\end{lemma}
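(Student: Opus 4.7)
The plan is to exploit the compact support of $\chi_\Omega f$ by choosing a window that collapses the STFT into a pure Fourier transform on a slab of $x$'s large enough to contribute the full factor $|\Omega|^{1/p}$. Since $\Omega$ is bounded, the Minkowski difference $\Omega-\Omega$ is also bounded, so one may fix $g\in\mathcal{C}_c^\infty(\rr d)\subset M^1_v(\rr d)$ with $g\equiv 1$ on $\Omega-\Omega$. Such a $g$ is admissible as a window in the whole range $0<p,q\le\infty$, and since different windows yield equivalent modulation (quasi-)norms, it is precisely this tailored choice that supplies the sharp constant $|\Omega|^{-1/p}$ appearing in the statement.

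The key pointwise observation is that whenever $x\in\Omega$ and $t\in\Omega$ the point $t-x$ lies in $\Omega-\Omega$, hence $\overline{g(t-x)}=1$. Therefore, from the STFT definition in \eqref{eq:STFTdef},
\begin{equation*}
V_g(\chi_\Omega f)(x,\xi)=\int_\Omega f(t)\,\overline{g(t-x)}\,e^{-2\pi i t\cdot\xi}\,dt=\widehat{\chi_\Omega f}(\xi)\qquad\text{for every }x\in\Omega,
\end{equation*}
so restricting the $L^p_x$-integration to $\Omega$ gives
\begin{equation*}
\int_{\rr d}|V_g(\chi_\Omega f)(x,\xi)|^p\,dx\;\ge\;\int_\Omega|\widehat{\chi_\Omega f}(\xi)|^p\,dx\;=\;|\Omega|\cdot|\widehat{\chi_\Omega f}(\xi)|^p,
\end{equation*}
which rearranges to $|\widehat{\chi_\Omega f}(\xi)|\le|\Omega|^{-1/p}\|V_g(\chi_\Omega f)(\cdot,\xi)\|_{L^p(\rr d)}$. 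The endpoint $p=\infty$ is covered by the convention $|\Omega|^{-1/\infty}=1$ together with the trivial bound $\|V_g(\chi_\Omega f)(\cdot,\xi)\|_{L^\infty}\ge\sup_{x\in\Omega}|V_g(\chi_\Omega f)(x,\xi)|=|\widehat{\chi_\Omega f}(\xi)|$.

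To conclude I would multiply both sides of the pointwise bound by $v_s(\xi)$ and take the $L^q_\xi$-norm, using the mixed-norm definition of $\|\cdot\|_{L^{p,q}_{1\otimes v_s}}$ recalled in \Cref{sec:modulation_spaces}:
\begin{equation*}
\|\chi_\Omega f\|_{\mathscr{F}L^q_{v_s}}=\|v_s\,\widehat{\chi_\Omega f}\|_{L^q}\;\le\;|\Omega|^{-1/p}\,\bigl\|v_s(\xi)\,\|V_g(\chi_\Omega f)(\cdot,\xi)\|_{L^p_x}\bigr\|_{L^q_\xi}\;=\;|\Omega|^{-1/p}\,\|\chi_\Omega f\|_{M^{p,q}_{1\otimes v_s}}.
\end{equation*}
The main obstacle is essentially cosmetic: one must verify that the $\Omega$-dependent window is a legitimate choice across the full range of exponents (clear because $\mathcal{C}_c^\infty\subset\mathscr{S}\subset M^1_v$), and that the endpoints $p=\infty$ or $q=\infty$ behave according to the essential-supremum convention. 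Apart from these bookkeeping points, the proof is a direct pointwise identity followed by monotone integration.
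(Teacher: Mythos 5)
Your proposal is correct and follows essentially the same route as the paper: choose a smooth compactly supported window equal to $1$ on a set containing the difference set of $\Omega$ (the paper uses $B_{2R}(0)$ with $\Omega\subset B_R(0)$, you use $\Omega-\Omega$ directly), observe that $V_g(\chi_\Omega f)(x,\xi)=\widehat{\chi_\Omega f}(\xi)$ for $x\in\Omega$, restrict the $L^p_x$-integration to $\Omega$ to extract the factor $|\Omega|^{1/p}$, and then take the weighted $L^q_\xi$-norm. The only cosmetic difference is your explicit handling of the $p=\infty$ endpoint, which the paper leaves to the usual conventions.
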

\begin{proof}
    The main intuition comes from the fact that, for compactly supported functions, the $M^{p,q}$-norm is equivalent to the $\mathscr{F}L^q$-norm, see, e.g., \cite[Proposition 2.3.26]{Cordero20TimeFrequencyAnalysisOperators}.
     	
    Consider $f\in M^{p,q}_{1\otimes v_s}(\rd)$,   $R>0$ such that $\Omega\subset B_R(0)$, and $g \in\mathcal{C}_c^\infty(\rd)$ with  $g\equiv 1$ on $B_{2R}(0)$.
    Observe that 
    $$
    g(t-x)=1,\quad \forall t,x\in B_R(0),
    $$
    and, in particular, 
    $$
    g(t-x)=1,\quad \forall t,x\in \Omega.
    $$
    Hence, for every $\xi \in \rd$ ,
    $$\widehat{(\chi_\Omega f)}(\xi)\chi_{ \Omega}(x)=V_g (\chi_\Omega f)(x,\xi)\chi_{ \Omega}(x) $$
    and, taking the $L^p$-norm with respect to the $x$-variable,
    $$
    |\Omega|^{1/p}|\widehat{\chi_\Omega f}(\xi)|=\| V_g (\chi_\Omega )(\cdot,\xi)\chi_{\Omega}(\cdo)\|_{L^p}\leq \| V_g (\chi_\Omega f)(\cdot,\xi)\|_{L^p}.
    $$     
    This yields
    $$
    \|\widehat{\chi_\Omega f}\|_{L^q_{v_s}}\leq|\Omega|^{-1/p} \| \| V_g (\chi_ \Omega f)\|_{L^p}\|_{L^q_{v_s}},
    $$
    that is,
    $$
    \|\chi_\Omega f\|_{\cF L^q_{v_s}}\leq|\Omega|^{-1/p} \| \chi_ \Omega f\|_{M^{p,q}_{1\otimes v_s}},
    $$
    as desired.
\end{proof}
We also have a vice versa of the previous result, under additional assumptions on the set $\Omega$.

\begin{lemma}\label{cor:FourierLebesgue_embedding_in_modulation_space}
  Let $\Omega\subset \rr{d}$  be a bounded and measurable set with non-empty interior.  Consider $0<p,q\leq \infty$, $s\in\rr{}$. If
	$\chi_\Omega f\in \mathscr{F}L^q_{v_s}(\rr{d})$,  then $\chi_\Omega f\in M^{p,q}_{1\otimes v_s}(\rr{d}) $
	with 
     \begin{equation}\label{stima1}\|\chi_\Omega f\|_{ M^{p,q}_{1\otimes v_s}(\rr{d})}\leq    |\Omega+\Omega|^{1/p} \|\chi_\Omega f\|_{\mathscr{F}L^q_{v_s}(\rr{d})}.
     \end{equation}
\end{lemma}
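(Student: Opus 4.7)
The plan is to combine a spatial-support argument with an $x$-uniform pointwise bound on the STFT coming from its Fourier-convolutional form. Fix a real-valued window $g\in\mathcal{C}_c^\infty(\rr{d})$ with $\operatorname{supp}(g)\subset -\overline{\Omega}$; such a $g$ exists because $\Omega$ has nonempty interior, and $g\in M^1_v(\rr{d})$ is admissible for defining the modulation norm. Write $h:=\chi_\Omega f$.

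First I establish the spatial support of $V_gh(\cdot,\xi)$. The integrand of $V_gh(x,\xi)=\int h(t)\overline{g(t-x)}e^{-2\pi i t\cdot\xi}\,dt$ is supported in $\Omega\cap(x-\overline{\Omega})$, which is nonempty if and only if $x\in\Omega+\overline{\Omega}$. Therefore $V_gh(\cdot,\xi)$ vanishes outside a set of Lebesgue measure $|\Omega+\Omega|$. Second, using the Fourier identity $\widehat{h\cdot T_x\bar g}=\hat h * \widehat{T_x\bar g}$ together with $|\widehat{T_x\bar g}(\eta)|=|\widehat{\bar g}(\eta)|$ (a spatial shift becomes a frequency modulation, which preserves modulus), I obtain the $x$-independent pointwise bound $|V_gh(x,\xi)|\leq(|\hat h|*|\widehat{\bar g}|)(\xi)$. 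Combining these two estimates yields
\[
  \|V_gh(\cdot,\xi)\|_{L^p}\leq|\Omega+\Omega|^{1/p}\,(|\hat h|*|\widehat{\bar g}|)(\xi).
\]

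To conclude, take the $L^q_{v_s}$-norm in $\xi$. Since $v_s$ is $v_{|s|}$-moderate, weighted Young's inequality of the form $L^q_{v_s}*L^1_{v_{|s|}}\hookrightarrow L^q_{v_s}$ provides $\||\hat h|*|\widehat{\bar g}|\|_{L^q_{v_s}}\leq\|h\|_{\mathscr{F}L^q_{v_s}}\|g\|_{\mathscr{F}L^1_{v_{|s|}}}$. Normalizing the admissible window so that $\|g\|_{\mathscr{F}L^1_{v_{|s|}}}=1$ yields exactly \eqref{stima1}.

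The main obstacle is the careful bookkeeping of window-dependent constants: since the modulation norm is only equivalent (not equal) for different admissible windows, the clean constant $|\Omega+\Omega|^{1/p}$ is obtained only with the specific normalization chosen above, and one has to verify that this $g$ still belongs to $M^1_v(\rr{d})$ with $\widehat{\bar g}\in L^1_{v_{|s|}}$, which holds because $g\in\mathcal{C}_c^\infty$ makes $\widehat{\bar g}$ rapidly decaying. A subsidiary technical issue is the quasi-Banach range $0<p,q<1$, where Young's inequality becomes a quasi-norm estimate with an additional absolute constant; the geometric support-and-pointwise argument nevertheless carries over unchanged.
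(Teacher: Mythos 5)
Your argument is essentially the paper's own proof: a compactly supported smooth window placed so that $V_g(\chi_\Omega f)(\cdot,\xi)$ lives in a set of measure at most $|\Omega+\Omega|$, the $x$-uniform pointwise bound $|V_g(\chi_\Omega f)(x,\xi)|\le (|\widehat{\chi_\Omega f}|*|\widehat{\bar g}|)(\xi)$ from the Fourier-side form of the STFT, then $L^p$ in $x$ followed by $L^q_{v_s}$ in $\xi$ via (weighted) Young with a normalized window — the paper does exactly this after first translating $\Omega$ to contain a ball and putting $\operatorname{supp} g$ inside that ball. The only slip is choosing $\operatorname{supp} g\subset-\overline{\Omega}$, which localizes $V_g(\chi_\Omega f)(\cdot,\xi)$ only in $\Omega+\overline{\Omega}$, whose measure can exceed $|\Omega+\Omega|$ when $\Omega$ is not closed; taking $\operatorname{supp} g$ inside the (nonempty) interior of $-\Omega$ fixes this and recovers the stated constant.
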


\begin{proof}
    First, we assume  $\chi_\Omega f\in \cF L^q_{v_m}(\rd)$. Note that, since 
    $$
    \|T_x\chi_\Omega f\|_{M^{p,q}_{1\otimes v_s} }\asymp\|V_g(\chi_\Omega f)(\cdot-x,\cdot)\|_{L^{p,q}_{1\otimes v_s}}
    =
    \|V_g(\chi_\Omega f)(\cdot,\cdot)\|_{L^{p,q}_{1\otimes v_s}}\asymp \|\chi_\Omega f\|_{M^{p,q}_{1\otimes v_s}},
    $$
	we can assume that  $\Omega$  contains a ball $B_R(0)$. Consider a window  $g \in\mathcal{C}_c^\infty(\rd)$ with $\operatorname{supp}g\subset B_R(0)\subset\Omega$,  so that $\Omega + \operatorname{supp} g\subset \Omega+\Omega$ , where
    \[
        \Omega + \Omega =\{x+y,\,\,x,y\in\Omega\}.
    \]
    Moreover, we assume $\|\cF g\|_{L^1}=1$. 
     Then, $V_g (\chi_\Omega f)$  is nonzero only when $g(t-x)
     $ overlaps $\Omega$, in other words, for each $\xi\in\rd$, $V_g (\chi_\Omega f)(\cdot, \xi)$ is supported in $\Omega+\Omega$. Thus, using 
    $$
    V_g(\chi_\Omega f)(x,\xi)=e^{-2\pi ix\cdot \xi} \cF(\widehat{(\chi_\Omega f)}\cdot T_\xi \bar{\hat{g}})(-x),
    $$
    we can write
	$$
    |V_g (\chi_\Omega f )(x,\xi)| =  |\cF^{-1}(\widehat{(\chi_\Omega f)}T_\xi\bar{\hat{g}})(x)|,\; \text{such that }x\in\Omega+\Omega
    $$
    and, taking the $L^p$-norm for the $x$-variable,   
\begin{align*}
		 \|V_g (\chi_\Omega f )(\cdot,\xi)\|_{L^p}&\leq \left(\int_{\Omega+\Omega}dx\right)^{1/p}\|V_g (\chi_\Omega f )(\cdot,\xi)\|_{L^\infty}= |\Omega+\Omega|^{1/p}\|\cF^{-1}(\widehat{(\chi_\Omega f )}T_\xi\bar{\hat{g}})\|_{L^\infty}\\
		 &\leq |\Omega+\Omega|^{1/p} \|\widehat{(\chi_\Omega f )}T_\xi\bar{\hat{g}}\|_{L^1}\leq |\Omega+\Omega|^{1/p} |\widehat{(\chi_\Omega f )}|\ast |\check{g}|(\xi).
		 \end{align*}
Finally, taking the $L^q_{v_s}$-norm in the above inequalities,
	$$
    \| \|V_g \chi_\Omega f\|_{L^p}\|_{L^q_{v_s}}\leq |\Omega+\Omega|^{1/p}   \| |\widehat{\chi_\Omega f}|\ast |\check{g}|\|_{L^q}\leq |\Omega+\Omega|^{1/p}  \|\widehat{\chi_\Omega f}\|_{L^q_{v_m}}\|\check{g}\|_{L^1},
    $$
	i.e., 
    $$
    \|\chi_\Omega f\|_{M^{p,q}_{1\otimes v_s}}\leq C_g |\Omega+\Omega|^{1/p}\|\chi_\Omega f\|_{\cF L^q_{v_s}}=|\Omega+\Omega|^{1/p}\|\chi_\Omega f\|_{\cF L^q_{v_s}},
    $$
    where $C_g=\|\check{g}\|_{L^1}=1$.
\end{proof}

\begin{corollary}\label{cor:FourierLebesgue_embedding_in_modulation_space_convex}
Under the assumptions of Lemma~\ref{cor:FourierLebesgue_embedding_in_modulation_space}, assume in addition that \(\Omega \subset \rr d\) is convex. Then the estimate~\eqref{stima1} can be improved by replacing \(|\Omega + \Omega|^{1/p}\) with \(2^{1/p} |\Omega|^{1/p}\), that is,
\[
\|\chi_\Omega f\|_{M^{p,q}_{1 \otimes v_s}(\rr{d})} \leq 2^{1/p} |\Omega|^{1/p} \|\chi_\Omega f\|_{\mathscr{F}L^q_{v_s}(\rr{d})}.
\]
\end{corollary}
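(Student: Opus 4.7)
The plan is to revisit the proof of Lemma \ref{cor:FourierLebesgue_embedding_in_modulation_space} and refine only the single step where the size of $\Omega$ enters. That proof controls $\|V_g(\chi_\Omega f)(\cdot,\xi)\|_{L^p}$ by $|\Omega+\Omega|^{1/p}\|V_g(\chi_\Omega f)(\cdot,\xi)\|_{L^\infty}$ through the trivial support-volume bound coming from $\operatorname{supp}_x V_g(\chi_\Omega f)(\cdot,\xi)\subseteq\Omega+\operatorname{supp} g\subseteq\Omega+\Omega$, and every subsequent step, namely the Fourier-side rewriting, the convolution estimate against $|\check{g}|$, and the $L^q_{v_s}$ bookkeeping, is entirely insensitive to the geometry of $\Omega$. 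Hence the convexity hypothesis can sharpen the constant only through a better control of $|\Omega+\Omega|$.

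The key geometric ingredient is the identity $\Omega+\Omega=2\Omega$, valid for every convex $\Omega\subset\rr d$. Indeed, for $x,y\in\Omega$, convexity places the midpoint $(x+y)/2$ in $\Omega$, so $x+y=2\cdot(x+y)/2\in 2\Omega$; the reverse inclusion $2\Omega\subseteq\Omega+\Omega$ is immediate. A linear change of variables then yields $|2\Omega|=2^{d}|\Omega|$, hence
$$
|\Omega+\Omega|^{1/p}=2^{d/p}|\Omega|^{1/p}.
$$
Plugging this into the conclusion of Lemma~\ref{cor:FourierLebesgue_embedding_in_modulation_space} gives the refined estimate with the constant $2^{d/p}|\Omega|^{1/p}$ in front of $\|\chi_\Omega f\|_{\mathscr{F}L^q_{v_s}(\rr d)}$, with no other modifications to the argument.

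The main obstacle is reconciling this outcome with the dimensional factor on $2$ as written in the statement. The natural support-volume argument above produces $2^{d/p}|\Omega|^{1/p}$, which agrees with the stated $2^{1/p}|\Omega|^{1/p}$ in dimension $d=1$ but differs by $2^{(d-1)/p}$ for $d\geq 2$. Any proof of the literal claim in arbitrary dimension would have to bypass the $L^\infty$-to-$L^p$ passage on the $(\Omega+\Omega)$-support: shrinking $\operatorname{supp} g$ through the convolution representation $V_g(\chi_\Omega f)(\cdot,\xi)\propto\mathscr{F}^{-1}(\widehat{\chi_\Omega f}\cdot T_\xi\overline{\hat g})$ is one candidate, but it breaks the normalization $\|\hat g\|_{L^1}=1$ used to absorb the Young-type constant. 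I therefore expect the intended reading to be $2^{d/p}|\Omega|^{1/p}$, with the proof proceeding exactly as outlined above; if the exponent $2^{1/p}$ is insisted upon for all $d$, a genuinely new ingredient beyond the support inclusion is needed, and I would view that as the delicate point to scrutinize.
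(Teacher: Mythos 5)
Your approach coincides with the paper's: the proof given there is exactly the midpoint observation $x+y=2\cdot\frac{x+y}{2}$ with $\frac{x+y}{2}\in\Omega$, i.e.\ $\Omega+\Omega=2\Omega$ for convex $\Omega$, substituted into the estimate of Lemma~\ref{cor:FourierLebesgue_embedding_in_modulation_space}. You are also right to single out the dimensional factor as the delicate point. Since Lebesgue measure scales as $|2\Omega|=2^{d}|\Omega|$, the set identity $\Omega+\Omega=2\Omega$ yields $|\Omega+\Omega|^{1/p}=2^{d/p}|\Omega|^{1/p}$, which is what your argument correctly delivers; the paper's one-line proof supplies nothing beyond this identity, so it establishes the stated constant $2^{1/p}|\Omega|^{1/p}$ only when $d=1$, and for $d\ge 2$ the exponent on $2$ should read $d/p$. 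The ``genuinely new ingredient'' you suspect would be needed to obtain $2^{1/p}$ in every dimension is indeed absent from the paper. (Your remark on shrinking $\operatorname{supp} g$ is essentially right in conclusion, though the obstruction is not the normalization $\|\check{g}\|_{L^1}=1$, which can always be restored by rescaling the amplitude of $g$; rather, changing the window changes the $M^{p,q}$ norm itself, and the equivalence constants between the two norms degenerate as the support shrinks, so no free improvement results.) In short, your proof is correct and is the intended one; the constant in the statement contains a dimensional slip, which also propagates to the convex-case bounds in Proposition~\ref{prop:supported_approximation_sobolev_space_local} and Corollary~\ref{cor:Barron}.
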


\begin{proof}
    The thesis follows by observing that, for every $x,y\in\Omega$, we can write  \[|x+y|=2|(x+y)/2|=2|z|\]
    where $z=(x+y)/2\in\Omega$.
\end{proof}

\section{Convergence Rates for Approximation of Modulation Space}
\label{sec:M_space_approximation}

In this section, we establish several results concerning the approximation capabilities of shallow neural networks for functions in certain weighted modulation spaces $M^{p,q}_{m}(\rr{d})$, evaluated under various norm errors.

To this end, we employ the phase representation of $ e^{2\pi i\eta\cdot x}$.
Recall that $\sigma\in W^{k, \infty}(\rr{})\subset M^\infty(\rr{})$  by \cite[Proposition 2.9]{Toft04ContinuityPropertiesModulation}. Furthermore, for any window $\varphi\in M^1(\rr{})$,  the STFT $V_\varphi \sigma$ belongs to the Wiener amalgam space 
$$
W(\mathscr{F}L^1,L^\infty)(\rr{2})\subset \mathcal{C}(\rr{2})\cap L^\infty(\rr{2}),
$$
see, e.g., \cite[Lemma 2.4.15]{Cordero20TimeFrequencyAnalysisOperators}, where $\mathcal{C}(\rr{2})$ is the space of continuous functions on $\rr{2}$.  Consider a real non-zero window function $\varphi\in M^1(\rr{})$. Since $M^1(\rr{})\hookrightarrow L^1(\rr{})$, the integral 
 \begin{equation*}
        (V_\varphi \sigma)(t,\tau ) = \int _{\rr{} }\sigma(s) \overline {\varphi (s-t)}e^{-2\pi is\tau}\, ds= \int _{\rr{} }\sigma(s)  {\varphi (s-t)}e^{-2\pi i s \tau}\, ds
        \end{equation*}
       is absolutely convergent:
         \begin{equation*}
        |(V_\varphi \sigma)(t,\tau ) |\leq  \int _{\rr{} }|\sigma(s)| \,|{\varphi (s-t)}|\, ds\leq \|\sigma\|_{L^\infty}\|\varphi\|_{L^1}\lesssim \|\sigma\|_{L^\infty}\|\varphi\|_{M^1}.
        \end{equation*}
     Using the linear change of variables    
     $$
        s= \eta\cdot x+b, \,\text{ for some fixed }\, \eta,x\in \rrd,
     $$ 
    the STFT $(V_\varphi \sigma)(t,\tau )$ can be written as 
    \begin{align*}
        (V_\varphi \sigma)(t,\tau ) &= \int _{\rr{} }\sigma(s) \overline {\varphi (s-t)}e^{-2\pi i s \tau}\, ds
        \\
         &=\int _{\rr{} }\sigma(\eta\cdot x+b) {\varphi (\eta\cdot x+b-t)}e^{-2\pi i ({\eta\cdot x+b})\tau}\, db.
    \end{align*}
    Since $\sigma$ and $\varphi$  are  non-zero,  the STFT is a non-zero continuous function on $\rr{2}$, hence the following condition holds:\par
    \textnormal{\textbf{Condition (A)}}: it exists a $(t,\tau)\in \rr{2}$, $\tau\not=0$, such that
    $(V_\varphi \sigma)(t,\tau)\neq 0$.\par
    \vspace{0.1truecm}
Under the above condition we can write
    \begin{align*}
        e^{2\pi i({\eta\cdot x})\tau}
        =\left((V_\varphi \sigma)(t,\tau ) \right)^{-1}
        \int_{\rr{}}\sigma(\eta\cdot x+b) {\varphi (\eta\cdot x+b-t)}e^{-2\pi i {b} \tau}\, db.
    \end{align*}
    This implies that
    \begin{equation}\label{eq:phase_identity}
        e^{2\pi i\eta\cdot x}
        =\left((V_\varphi \sigma)(t,\tau ) \right)^{-1}
        \int_{\rr{}}\sigma\left(\frac{\eta\cdot x}{\tau}+b\right) {\varphi \left(\frac{\eta\cdot x}{\tau}+b-t\right)}e^{-2\pi i {b} \tau}\, db,
    \end{equation}
where the integral on the right-hand side is absolutely convergent.
The above computations will play a role in the proofs of the following theorems.
\begin{theorem}[Local Approximation]
\label{thm:approximation_sobolev_space_local}   
    Let $n\in \zzp{}$, $0< q\le 2 \le r$, and $0< p<\infty$. Consider a bounded domain $\Omega\subset \rrd$, and an activation function
    $$\sigma\in W^{k, \infty}(\rr{})\setminus\{0\},\quad  \mbox{with} \,\,k\geq n.$$  Let $\varphi\in \mathscr{S}(\rr{})\setminus \{0\}$, $\phi\in \mathscr{S}(\rr{d})\setminus \{0\}$,  and define the dictionary $\bb{D}$ by
    \begin{equation}\label{eq:D}
         \bb{D} = \{ x \mapsto \sigma\left(\tfrac{\eta\cdot x}{\tau}+b\right)
        \varphi(\tfrac{\eta\cdot x}{\tau}+b-t)\phi(x-y) \text{ such that }(y,\eta, b) \in \rr{d}\times\rr{d}\times \rr{}\},
    \end{equation}
    with $t,\tau$ satisfying \textnormal{\textbf{Condition (A)}}.
    Let $m =(v_{s_1}\otimes v_{s_2})$ with
    \begin{equation}\label{eq:indices}
    \left\{
        \begin{array}{l}
                s_1=0\quad\mbox{if}\quad 0<p\leq 1,\quad\,s_1>\frac{d}{p'},\quad\mbox{if}\quad p>1\\
                s_2=n+1\quad\mbox{if}\quad 0<q\leq 1,\quad s_2>n + 1 +\frac {d}{q'},\,\,\mbox{if}\quad\,q>1.
                 \rule{0mm}{0.55cm}
        \end{array}
        \right. 
    \end{equation}
    Then, for every $f\in M^{p,q}_m(\rd)$,
    there exists a constant $C>0$ such that
    \begin{align}\label{eq:approximation_bound}
        \inf_{f_{N}\in\Sigma_{N}(\bb{D})}\norm{f-f_N}{W^{n,r}(\Omega)}
        &\leq C N^{-\frac{1}{2}} \abs{\Omega}^{1/r} 
        \norm{f}{M^{p,q}_m(\rd)},
    \end{align}
    for all $N\in\nn{}$.
\end{theorem}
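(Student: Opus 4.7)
The plan is to realise $f$ as a Bochner integral of atoms drawn from $\bb D$ and then invoke Maurey's sampling bound (\Cref{prop:approximation_type2}) in the Banach space $W^{n,r}(\Omega)$, which is of Rademacher type $2$ because $r\ge 2$. Once the variation-norm bound $\|f\|_{\mathcal K(\bb D)}\lesssim \|f\|_{M^{p,q}_m}$ and the uniform atom bound $K_{\bb D}=\sup_{d\in\bb D}\|d\|_{W^{n,r}(\Omega)}\lesssim |\Omega|^{1/r}$ are in place, \Cref{prop:approximation_type2} delivers \eqref{eq:approximation_bound} at once with an explicit constant.

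The first step is an embedding reduction. The conditions \eqref{eq:indices} on $s_1,s_2$ are exactly the ones required by \Cref{thm:embeddings} in order to obtain the continuous inclusion
\[
  M^{p,q}_{v_{s_1}\otimes v_{s_2}}(\rr d)\hookrightarrow M^1_{1\otimes v_{n+1}}(\rr d),
\]
so it suffices to prove the estimate for $f\in M^1_{1\otimes v_{n+1}}(\rr d)$ with the $M^{p,q}_m$-norm on the right-hand side of \eqref{eq:approximation_bound} replaced by $\|f\|_{M^1_{1\otimes v_{n+1}}}$.

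For the representation step, I would pick $\gamma:=\phi\in\mathscr S(\rr d)$ and some $g\in\mathscr S(\rr d)$ with $(\gamma,g)\neq 0$, and combine the STFT inversion (\Cref{C2invfmod}) with the phase identity \eqref{eq:phase_identity}. After exchanging the order of integration via Fubini, this produces
\begin{equation*}
  f(x)=\frac{1}{(\gamma,g)\,V_\varphi\sigma(t,\tau)}\int_{\rr{2d+1}} V_g f(y,\eta)\,e^{-2\pi i b\tau}\,d_{y,\eta,b}(x)\,dy\,d\eta\,db,
\end{equation*}
where $d_{y,\eta,b}\in\bb D$. Via \Cref{prop:variation_norm}, this expresses $f$ as the Bochner integral of the inclusion map against a complex Borel measure on $\bb D$, and the key task is to bound the associated total variation by a constant multiple of $\|f\|_{M^1_{1\otimes v_{n+1}}}$. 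This is the main obstacle: the kernel $e^{-2\pi i b\tau}$ has modulus one, so no decay is gained in $b$ at the coefficient level; the compensating decay must be extracted from the Schwartz factor $\varphi(\tfrac{\eta\cdot x}{\tau}+b-t)$ inside the atom, while the polynomial $\eta$-growth of the atoms' $W^{n,r}(\Omega)$-norms (produced by the chain-rule factors $\eta/\tau$ under each derivative) has to be absorbed into the $v_{n+1}$-weight of the measure. This rebalancing is exactly what forces the order $n+1$ in the hypothesis \eqref{eq:indices}.

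Once the variation-norm bound is established, the uniform atom bound $K_{\bb D}\lesssim |\Omega|^{1/r}$ follows by applying the Leibniz rule to $d_{y,\eta,b}$ and using $\sigma\in W^{k,\infty}(\rr{})$ with $k\ge n$ together with the Schwartz decay of $\varphi$ and $\phi$; integrating the resulting pointwise bound over $\Omega$ in $L^r$ produces the prefactor $|\Omega|^{1/r}$. Plugging both bounds into \Cref{prop:approximation_type2} in the type-$2$ space $W^{n,r}(\Omega)$ then yields the claimed estimate \eqref{eq:approximation_bound}.
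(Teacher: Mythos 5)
Your overall skeleton is the same as the paper's: STFT inversion with window $\phi$, insertion of the phase identity \eqref{eq:phase_identity}, the embedding $M^{p,q}_{v_{s_1}\otimes v_{s_2}}\hookrightarrow M^1_{1\otimes v_{n+1}}$ from \Cref{thm:embeddings}, a representing measure via \Cref{prop:variation_norm}, and Maurey's bound (\Cref{prop:approximation_type2}) in the type-2 space $W^{n,r}(\Omega)$. However, there is a genuine gap exactly at the step you yourself flag as ``the main obstacle'': as stated, your two inputs to Maurey are asserted for the raw dictionary $\bb{D}$, and for $\bb{D}$ both of them are false. First, $K_{\bb D}=\sup_{h\in\bb D}\norm{h}{W^{n,r}(\Omega)}$ is \emph{infinite} when $n\ge 1$: each derivative falling on $\sigma(\aff_{\eta,b}(\cdot))$ or $\varphi(\aff_{\eta,b}(\cdot)-t)$ produces a factor $\eta/\tau$, and for any $|\eta|$ one can choose $b$ so that $\aff_{\eta,b}(x)$ stays near the center of the window on part of $\Omega$, so the Schwartz decay of $\varphi$ cannot cancel the $|\eta|^{|\alpha|}$ growth. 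Second, the measure produced by the phase identity, $C\,e^{-2\pi i b\tau}V_\phi f(y,\eta)\,db\,dy\,d\eta$, has infinite total variation because nothing decays in $b$, so the representation does not by itself place $f$ in $\mathcal K(\bb D)$, let alone give $\norm{f}{\mathcal K(\bb D)}\lesssim\norm{f}{M^{p,q}_m}$.

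The paper resolves this by an explicit construction you only gesture at: it introduces the weight $\vartheta(\eta,b)=v_n(\eta)\,v_s\bigl((\abs{b}-R_\Omega\abs{\eta/\tau})_+\bigr)$ with $s<-1$ and $R_\Omega=\sup_{x\in\Omega}\abs{x}$, and works with the rescaled dictionary $\tilde{\bb D}=\{\rho/\vartheta\}$ while multiplying the measure by $\vartheta$. Three concrete estimates then do the work: (i) $\int_{\rr{}}\vartheta(\eta,b)\,db\le C_{\Omega,s}\,v_{n+1}(\eta)$ (the term $R_\Omega\abs{\eta/\tau}$ is what produces the extra power, i.e.\ the $n+1$ in \eqref{eq:indices}), which yields $\norm{f}{\mathcal K(\tilde{\bb D})}\lesssim\norm{f}{M^1_{1\otimes v_{n+1}}}\lesssim\norm{f}{M^{p,q}_m}$; (ii) on $\Omega$ one has $\abs{\aff_{\eta,b}(x)}\ge(\abs{b}-R_\Omega\abs{\eta/\tau})_+$, so the Schwartz decay of $\varphi$ dominates the $v_{-s}$ factor and the divisor $v_n(\eta)$ absorbs the chain-rule powers of $\eta$, giving the uniform bound $K_{\tilde{\bb D}}\lesssim\abs{\Omega}^{1/r}$; (iii) the inclusion $\Sigma_{N,M_f}(\tilde{\bb D})\subseteq\Sigma_N(\bb D)$ transfers the Maurey estimate back to the unweighted dictionary, precisely because one never needs $f\in\mathcal K(\bb D)$. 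Without constructing such a weight and carrying out (i)--(iii), your plan does not close; with them, it is the paper's proof.
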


To aid in visualizing the parameter constraints required for the approximation rates, we illustrate the admissible regions for the indices $s_1$ and $s_2$ in \cref{fig:admissible_regions}.
\begin{figure}[htbp]
    \centering
    \includegraphics[width=\textwidth]{} 
    \caption{Visual representation of the admissible regions for the weight indices $s_1$ (left) and $s_2$ (right) as defined in \cref{eq:indices}. The solid blue lines indicate the constant values for $p, q \le 1$, while the red shaded areas represent the necessary growth conditions for $p, q > 1$, which depend on the dimension $d$ and derivative order $n$. In this illustration, we set $d=2$ and $n=1$.}
    \label{fig:admissible_regions}
\end{figure}
\begin{proof}
    First, we consider $f\in\mathscr{S}(\rrd)$, so that at a later stage we can invoke the density of $\mathscr{S}$ in $M^{p,q}_m$, when $0< p,q<\infty$ (see \cref{eq:inclmod} and \cref{eq:density_mod}).
    Note that, since the $M^{p,q}_m$-norm is independent of the window function, we assume that $\phi\in \mathscr{S}(\rrd)$ is positive and that $\|\phi\|_{L^2}=1$.  Applying the inversion formula for the STFT in \eqref{invfmpq} (with $g=\gamma=\phi$),  we obtain 
    \begin{equation}\label{eq:STFT_signal_representation}
    f(x) = 
         \int_{\rr{2d}}
         V_\phi f(y,\eta)\,\phi(x-y)\,e^{2\pi i x\cdot \eta}\,dy\,d\eta,
    \end{equation}
    with converge in $M^{p,q}_m(\rrd)$ (see \cref{sec:modulation_spaces}).
    Observe that for every $h\in M^{p',q'}_{1/m}(\rrd)$,
    $$(f,h)=(V_\phi f,V_\phi h),$$ where on the left-hand side we have the duality between $M^{p,q}_m(\rrd)$  and $M^{p',q'}_{1/m} (\rrd)$, and on the right-hand side $L^{p,q}_m(\rr{2d})$ and $L^{p',q'}_{1/m}(\rr{2d})$. 
    For $\varphi\in\mathscr{S}(\rr{})\hookrightarrow M^1(\rr{})$ we choose $(t,\tau)$ satisfying \textnormal{\textbf{Condition (A)}} and  insert the identity \eqref{eq:phase_identity} for $e^{2\pi i{\eta\cdot x}}$ in the representation \eqref{eq:STFT_signal_representation}  of the signal $f$, obtaining:
    \begin{align*}
        f(x)
        &=
         \int_{\rr{2d}}
         V_\phi f(y,\eta)\,\phi(x-y)\,\left((V_\varphi \sigma)(t,\tau ) \right)^{-1}\\
         &\qquad\quad
        \quad \times \quad\int_{\rr{}}\sigma\left(\frac{{\eta\cdot x}}{\tau}+b\right) {\varphi \left(\frac{{\eta\cdot x}}{\tau}+b-t\right)}e^{-2\pi i {b}\cdot \tau}\, db\,dy\,d\eta\\
        &=\left((V_\varphi \sigma)(t,\tau ) \right)^{-1} \!
         \int_{\rr{2d}}\int_{\rr{}} V_\phi f(y,\eta)\,\phi(x-y)\\
        &\qquad\quad\times\quad\sigma\left(\frac{{\eta\cdot x}}{\tau}+b\right) {\varphi \left(\frac{{\eta\cdot x}}{\tau}+b-t\right)}e^{-2\pi i {b}\cdot \tau}\, db\,dy\,d\eta.\\
    \end{align*}
    In order to simplify the previous identity of the signal $f$, we define
    \begin{align*}
        C_{\sigma ,\varphi}
        &=   
        (V_\varphi \sigma)(t,\tau ) ^{-1}
    \end{align*}
    along with the parametrized function 
    \[
    \aff_{\eta, b}(x) \equiv \aff_{\tau, \eta, b}(x)= \frac{\eta\cdot x}{\tau}+b.
    \]
    Note that, we omit the dependence on $t, \tau$ in the definition of  $C_{\sigma,\varphi}$ as well as in the affine function $\aff$, since $t$ and $\tau$ are fixed constants.
    As a consequence, we get the following integral representation
    \begin{align*}
        f(x)
        &=
        C_{\sigma ,\varphi}\int_{\rr{2d}}\int_{\rr{}}
         \rho(x, y, \eta, b)
        e^{-2\pi i {b}\cdot \tau}
        V_\phi{f}(y, \eta)
        \,db\,dy\,d\eta,
    \end{align*}
    where
    $$
    \rho(x, y, \eta, b) =  \sigma\left(\aff_{\eta, b}\left(x\right)\right)
       \varphi(\aff_{\eta, b}\left(x\right)-t)\phi(x-y)
    $$
    (again we omit the dependence on $t$ and $\tau$ in the atom $\rho$, as they are fixed constants).
    Now we split the previous identity in two parts: first the element of the dictionary then the measure:
    \begin{align}
        \rho(x, y, \eta, b) &=  \sigma\left(\aff_{\eta, b}\left(x\right)\right)
        \varphi(\aff_{\eta, b}\left(x\right)-t)\phi(x-y),\label{eq:rho}
        \\
        d\mu_f(y, \eta, b) &= C_{\sigma ,\varphi}\, e^{-2\pi i {b}\cdot \tau}\,
        V_\phi{f}(y, \eta)
        \, d(b,y, \eta).\label{eq:mu_f}
    \end{align}
    As a result, we can write $f$ as
    \begin{align*}
        f = \int _{\bb{D}} i_{\bb{D} \to \mathcal{B}} \,d\mu_f,
    \end{align*}
    where $ \bb{D}$ is the dictionary in \eqref{eq:D}, namely,
    \begin{align*}
        \bb{D} = \{ \rho(\cdo, y, \eta, b) \text{ such that }(y,\eta, b) \in \rr{d}\times\rr{d}\times \rr{}\}
        \quad\text{and }
        \mathcal{B} = W^{n,r}(\Omega).
    \end{align*}
    Consequently, the variation norm of $f$ can be bounded in terms of the $L^1$ norm as follows:
    \begin{align*}
        \norm{f}{\mathcal{K}(\bb{D})}
        \le
        \int_{\rr{2d}}\int_{\rr{}} \,d\abs{\mu_f}(y, \eta, b)
        =
        \norm{\mu_f}{L^1}.
    \end{align*}
    Although the previous quantity provides a bound on the variation norm of $f$, this does not place $f$ within the variation space of the dictionary $\bb{D}$, since the bound does not converge over $b$.
    For this reason, we adjust the dictionary by introducing weights, as described in the following. Let $\vartheta$ be a weight defined as
    \begin{align*}
        {\vartheta}(\eta, b)
        :=v_n(\eta)
        v_s\left((\abs{b}-R_\Omega\abs{\frac{\eta}{\tau}})_+\right), \text{ such that }s<-1,
    \end{align*}
    and that $R_\Omega = \sup_{x\in \Omega} \abs{x}$. Note that, since $R_\Omega, \tau$ and $s$ are fixed constants, we do not include them among the variables that define the weight $\vartheta$.
    Then,  we define the dictionary $\tilde{\bb{D}}$ associated with the weight function $\vartheta$ and derived from the atoms in $\bb{D}$, as follows:
    \begin{align*}
        \tilde{\rho}(x, y, \eta, b)
        := \frac{\rho(x, y, \eta, b)}
        {\vartheta(\eta, b)},
    \end{align*}
    thus $\tilde{\bb{D}}$ is defined as
    \[
        \tilde{\bb{D}} = \{ x\mapsto \tilde{\rho}(x, y, \eta, b) \text{ such that }(y,\eta, b) \in \rr{d}\times\rr{d}\times \rr{}\}.
    \]
    Consequently, the representation of $f$ can be expressed for all $x\in \rrd$ as:
    \begin{align*}
        f(x)
        &=
        C_{\sigma ,\varphi}\int_{\rr{2d}}\int_{\rr{}}
         \tilde{\rho}(x, y, \eta, b)
        \vartheta(\eta, b)e^{-2\pi i {b}\cdot \tau}
        V_\phi{f}(y, \eta)
         \,db\,dy\,d\eta.
    \end{align*}
    In the construction of the weight, we mainly focus on the convergence with respect to $b$.
    To proceed with our analysis, we derive an upper bound for the variation norm of $f$ (with respect to the dictionary $\tilde{\bb{D}}$) in terms of an appropriate modulation norm.
    
    Observe that from \cref{prop:variation_norm} and \cref{eq:mu_f}, it is straightforward that 
    \[
        \norm{f}{\mathcal{K}(\tilde{\bb{D}})}
        =
        \inf\left\{\norm{\mu}{}:
        f=\int_{\tilde{\bb{D}}}i_{\tilde{\bb{D}}\to W^{n,r}(\Omega)}\,d \mu\right\}
        \leq 
        \norm{\tilde{\mu}}{L^1},
    \]
    with the density  $\tilde{\mu}(y, \eta, b) = C_{\sigma ,\varphi}\, e^{-i {b}\cdot \tau}\,
         \vartheta(\eta, b)\,V_\phi{f}(y, \eta).$
    Then, the variation norm of $f$ is now bounded as follows
    \begin{align*}
        \norm{f}{\mathcal{K}(\tilde{\bb{D}})}
        \leq C_{\sigma ,\varphi}
        \int_{\rr{2d}}\int_{\rr{}}
         \vartheta(\eta, b)
        \abs{V_\phi{f}(y, \eta)}
         \,db\,dy\,d\eta.
    \end{align*}    
    The integration over $b$ involves only the weight $\vartheta$, and can therefore be characterized as a function of $\eta$
    \begin{align*}
        I(\eta)&:=\int_{\rr{}} \vartheta \left( \eta,b \right) \,d b
        =
        \int_{\rr{}} v_n(\eta)
        v_s\left((\abs{b}-R_\Omega\abs{\frac{\eta}{\tau}})_+\right) \,d b
        \\
        &=  2v_n(\eta)  \left( \int_0^{R_\Omega\abs{\frac{\eta}{\tau}}}  \,d b
        + \int _{R_\Omega\abs{\frac{\eta}{\tau}}}^\infty v_s(b-R_\Omega\abs{\frac{\eta}{\tau}}) \,d b\right)      
        \\
        &=2v_n(\eta)  \left( {R_\Omega\abs{\frac{\eta}{\tau}}}
        + \frac{1}{2}\operatorname{B}(\frac{1}{2}, \frac{-s-1}{2})\right)      
        \leq  C_{\Omega , s}v_{n + 1}(\eta),
    \end{align*}
    where $\operatorname{B}(\frac{1}{2}, \frac{-s-1}{2})$ denotes the Beta function and
    $$
        C_{\Omega, s}  = 2R_\Omega + \operatorname{B}(\frac{1}{2}, \frac{-s-1}{2}) 
    $$
    is a finite positive constant depending on $\Omega$ and $s$.
    Note that if $\Omega$ is the unit ball and $s=-2$ then $C_{\Omega, s}= 2 + \pi$.
    Consequently, the variation norm of $f$ is controlled by
    \begin{equation*}
    \begin{aligned}
        \norm{f}{\mathcal{K}(\tilde{\bb{D}})}
            &\leq C_{\sigma ,\varphi}
        \int_{\rr{2d}}\int_{\rr{}}
         \vartheta(\eta, b)
        \abs{V_\phi{f}(y, \eta)}
         \,db\,dy\,d\eta
        =C_{\sigma ,\varphi} \int_{\rr{2d}}\int_{\rr{}}
         \vartheta(\eta, b)\,db\,
        \abs{V_\phi{f}(y, \eta)}
         \,dy\,d\eta
        \\
        &=C_{\Omega , s} C_{\sigma ,\varphi}  
           \int_{\rr{2d}}
           v_{n + 1}(\eta)
           \abs{V_\phi{f}(y, \eta)}
         \,dy\,d\eta
        \leq C_{\Omega , s} C_{\sigma ,\varphi}
        \int_{\rr{2d}}
        v_{n + 1}(\eta)
        \abs{V_\phi{f}(y, \eta)}
         \,dy\,d\eta.
    \end{aligned}
    \end{equation*}
    Using the inclusion relations of Theorem \ref{thm:embeddings} for $p>1$ or $q>1$,     we infer that
    \begin{equation*}
    \int_{\rr{2d}}
        v_{n + 1}(\eta)
        \abs{V_\phi{f}(y, \eta)}
        \,dy\,d\eta=
        \|f\|_{M^1_{1\otimes v_{n+1}}}
        \le C_{p,q}
        \,
        \|f\|_{M^{p,q}_{v_{s_1}\otimes v_{s_2}}}
    \end{equation*}
    with  the index relation
    \begin{equation*}
        \frac1{p}>1-\frac{s_1}{d},\quad \frac1{q}>1+\frac{n+1-s_2}{d},
    \end{equation*}
    for a suitable constant $C_{p,q}>0$. Observe that, for $p\leq1$ or $q\leq1$ we have the weight $m=(1\otimes v_{n+1})$ by Theorem \ref{thm:embeddings}, as well. This yields the index relations in \eqref{eq:indices}.
    
    Finally, we get an upper bound to the variation norm of $f$ that involves weighted modulation norm of $f$ where the weight performs at most polynomially.
    Hence, 
    \begin{align}\label{eq:FL_q_tilde_inclusion_II}
        \norm{f}{\mathcal{K}(\tilde{\bb{D}})}
        &\le
         C_{p,q} C_{\Omega , s} C_{\sigma ,\varphi}
            \left(\int_{\rrd}
                \left(\int_{\rrd}
                        (v_{s_1}\otimes v_{s_2})^p(y,\eta)
                          \abs{V_\phi f(y,\eta)}
                    ^{p}
                dy\right)^{\frac{q}{p}}
            d\eta\right)^{\frac{1}{q}}
        \\
        &=  C\,
        \norm{f}{M^{p,q}_m}\nonumber
    \end{align} 
    where
    $
         C = C_{p,q} C_{\Omega , s} C_{\sigma ,d}
        \quad\text{and}\quad m =v_{s_1}\otimes v_{s_2}.
    $
    
    In order to verify that the constructed dictionary lies within the underlying Banach space $W^{n,r}(\Omega)$, we establish a uniform bound, that is,
    $$
    \sup_{h\in \tilde{\bb{D}}}\norm{h}{W^{n, r}(\Omega)} < \infty.
    $$
    This also plays a key role in the application of the Maurey result.
    To this end, we check whether each function $ \tilde{\rho}(x, y, \eta, b)$ belongs to $W^{n, r}(\Omega)$ for any $y, \eta$ and $b$.
    Recall that the activation functions used to construct our dictionary take the form
    \[
        \tilde{\rho}(x, y, \eta, b)
        := \frac{\rho(x, y, \eta, b)}
        {\vartheta(\eta, b)} = \frac{\sigma\left(\aff_{\eta, b}\left(x\right)\right)
        \varphi(\aff_{\eta, b}\left(x\right)-t)\phi(x-y)}  {\vartheta(\eta, b)}.
    \]
    Since the weight $\vartheta$ is independent of the variable $x$, and since $\sigma\in W^{k,\infty}(\rr{})$ whereas the windows $\varphi$ and $\phi$ are smooth, the function $\tilde{\rho}$ admits weak derivatives with respect to $x$ up to order $n$, provided that $n\le k$. If, in addition, $k>n$, then $\tilde{\rho}$ is in fact classically differentiable with respect to $x$ up to order $n$.
    Hence, for any $\alpha\in \zzp{d}$ such that $\abs{\alpha}\le n$, and $C_{\alpha, \beta, \gamma}=\frac{\alpha!}{\beta!\gamma!(\alpha - \beta - \gamma)!}$, we have
    \begin{align*}
        \norm
        {\partial^\alpha&\tilde{\rho}\left(\cdot, y, \eta, b\right)}{L^{r}(\Omega)}
        =
        \frac{1}{\vartheta(\eta, b)}
        \norm{\partial^\alpha\left(\sigma\left(\aff_{\eta,b}(\cdo)\right)
        \varphi(\aff_{\eta,b}\left(\cdo\right)-t)\phi(\cdo-y)\right)}
        {L^{r}(\Omega)}
        \\
        &\le
        \frac{1}{\vartheta(\eta, b)}
        \sum _{\beta +\gamma\le \alpha}
        C_{\alpha, \beta, \gamma}
        \frac{\abs{\eta}^{\abs{\alpha - \beta - \gamma}+|\beta|}}{|\tau|^{\abs{^{\alpha - \beta - \gamma}}+|\beta|}}
        \norm{\sigma^{(|\alpha-\beta-\gamma|)}\left(\aff_{\eta, b}\left(\cdo\right)\right)
       \varphi ^{(|\beta|)}\left(\aff_{\eta, b}\left(\cdo\right)-t\right)\partial^\gamma\phi(\cdo -y)}
        {L^{r}(\Omega)}
        \\
        &\le
        \frac{1}{\vartheta(\eta, b)}
        \sum _{\beta +\gamma\le \alpha}C_{\alpha, \beta, \gamma}c_\gamma
        \frac{\abs{\eta}^{\abs{\alpha}-\abs{\gamma}}}{|\tau|^{\abs{\alpha} - \abs{\gamma}}}
        \norm{\sigma^{(|\alpha-\beta-\gamma|)}\left(\aff_{\eta, b}\left(\cdo\right)\right)
       \varphi ^{(|\beta|)}\left(\aff_{\eta, b}\left(\cdo\right)-t\right)}
        {L^{r}(\Omega)},
    \end{align*}
    the previous holds true as $\phi\in \mathscr{S}(\rrd)$, and thus for any $\gamma \in \zzp{d}$, it follows that
    \[
        \norm{\partial^\gamma\phi}{L^\infty(\rrd)}\le c_\gamma.
    \]
    Since $s<-1$, and  the estimate 
    \[
        \abs{\aff_{\eta, b}(x)}\geq \left(\abs{b}-R_\Omega\abs{\frac{\eta}{\tau}}\right)_+,
    \]
    holds, we have
    \[
       v_{-s}(\aff_{\eta, b}(\cdo) )
       \geq
       v_{-s}\left((\abs{b}-R_\Omega\abs{\frac{\eta}{\tau}})_+\right).
    \]
    Moreover, giving that $\gamma \le \alpha$, using the following elementary bounds
    \begin{align*}
        \frac{\abs{\eta}^{\abs{\alpha}-\abs{\gamma}}}{v_n(\eta)}
        \le 1
        \quad\text{ and }
        \abs{\tau}^{-(\abs{\alpha} - \abs{\gamma})}
        \le \left(\frac{1 + \abs{\tau}}{\abs{\tau}}\right)^{\abs{\alpha}},
    \end{align*}
    we conclude the upper bound 
    \begin{multline*}
       \norm
        {\partial^\alpha\tilde{\rho}\left(\cdot, y, \eta, b\right)}{L^{r}(\Omega)}
        \\
        \le
          \left(\frac{1 + \abs{\tau}}{\abs{\tau}}\right)^{\abs{\alpha}}
          \sum _{\beta +\gamma\le \alpha}
          c_\gamma
          \norm{\sigma^{(|\alpha-\beta - \gamma|)}}{L^{\infty}(\rr{})}
          \norm{
        \frac{\varphi^{(|\beta|)}\left(\aff_{\eta, b}\left(\cdo\right)-t)\right)}{v_s(\aff_{\eta, b}(\cdo) )}}
        {L^{r}(\Omega)}.
    \end{multline*}
    Given that $\varphi\in \mathscr{S}(\rr{})$, it is straightforward to verify that 
    \[
        \norm{
        \frac{\varphi^{(|\beta|)}\left(\aff_{\eta, b}\left(\cdo\right)-t)\right)}{v_s(\aff_{\eta, b}(\cdo) )}}
        {L^{r}(\Omega)}
        \le
        \abs{\Omega}^{\frac{1}{r}}C_{s, \beta},
    \]
    holds for any $\eta\in \rrd, b\in \rr{}$ and $\abs{\beta}\le n$, where $C_{s,\beta}>0$ depends on $s$ and $\beta$. Putting everything together, we get
    \begin{align*}
          \norm
        {\partial^\alpha\tilde{\rho}\left(\cdot, y,  \eta, b\right)}{L^{r}(\Omega)}
        &\le
        \abs{\Omega}^{\frac{1}{r}}
         \left(\frac{1 + \abs{\tau}}{\abs{\tau}}\right)^{\abs{\alpha}}
        \sum _{\beta + \gamma \le \alpha}
        c_\gamma
        C_{s, \beta}
        \norm{\sigma^{(|\alpha- \beta - \gamma|)}}{L^\infty(\rr{})}
        \\
        &\le
        \abs{\Omega}^{\frac{1}{r}}
        \norm{\sigma}{W^{k, \infty}(\rr{})}
        \left(\frac{1 + \abs{\tau}}{\abs{\tau}}\right)^{n}
        \sum _{\beta + \gamma \le \alpha}
        C_{\alpha, \beta, \gamma}c_\gamma
        C_{s, \beta}.         
    \end{align*}
    Thus, we conclude that
    \begin{align*}
        \norm{\tilde{\rho}(\cdo, y, \eta, b)}{W^{n,r}(\Omega)}
        &=
        \Bigg(\sum_{\abs{\alpha}\leq n}
            \norm{\partial^\alpha
                \tilde{\rho}(\cdo, y, \eta, b)
                }{L^{r}(\Omega)}^r
        \Bigg)^{\frac{1}{r}}\\
        &\le
        \abs{\Omega}^{\frac{1}{r}}
        \norm{\sigma}{W^{k, \infty}(\rr{})}
        \left(\frac{1 + \abs{\tau}}{\abs{\tau}}\right)^{n}
        \left(\sum_{\abs{\alpha}\leq n}
        \left( \sum _{\beta + \gamma \le \alpha}
        c_\gamma
        C_{s, \beta}
        \right)^{r}
        \right)^{\frac{1}{r}}.
    \end{align*}
    The previous quantity is finite for any fixed $t$ and $\tau\neq 0$, and uniformly bounded for any $\eta \in\rrd$ and $b\in\rr{}$.
    Hence, we conclude that the weighted dictionary $\tilde{\bb{D}}$ is uniformly bounded in $W^{n, r}(\Omega)$.
    
    Finally, by selecting $r\geq 2$ and  $n\in \zzp{}$, it follows that $W^{n,r}(\Omega)$ is a type-2 Banach space, see \cite[Corollary A.6]{Brzezniak95StochasticPartialDifferential}.
    Furthermore, the previous step clearly shows that $\tilde{\bb{D}} \subset W^{n,r}(\Omega)$ and that the dictionary $ \tilde{\bb{D}}$ is uniformly bounded in $W^{n,r}(\Omega)$, that is
    $$
    K_{ \tilde{\bb{D}}} := \sup_{h\in \tilde{\bb{D}}}\norm{h}{W^{n, r}(\Omega)}
    \equiv 
    \sup_{y, \eta, b}\norm{\tilde{\rho}(\cdo, y, \eta, b)}{W^{n,r}(\Omega)}
    < \infty.
    $$
    Since $\mathscr{S}$ is dense in $M^{p,q}_m$, $p,q<\infty$,
    the estimate in \cref{eq:FL_q_tilde_inclusion_II} places $f$ in the variation space $ K_{ \tilde{\bb{D}}}$ with a finite variation norm $\norm{f}{ K_{ \tilde{\bb{D}}}}$.
    Applying Maurey's approximation bound (see \cref{prop:approximation_type2}), with $M_f = \norm{f}{ K_{ \tilde{\bb{D}}}}$, we obtain the following estimate:
    \begin{align*}
        \inf_{f_N\in\Sigma_{N, M_f}(\tilde{\bb{D}})}
        \norm{f-f_N}{W^{n,r}(\Omega)}
        &\leq
        4C_{2, W^{n,r}(\Omega)}  K_{ \tilde{\bb{D}}}N^{-\frac{1}{2}}\norm{f}{ K_{ \tilde{\bb{D}}}},
        \\
        &\leq 
        4C_{2, W^{n,r}(\Omega)}  K_{ \tilde{\bb{D}}}N^{-\frac{1}{2}}C\norm{f}{ M^{p,q}_{m}}.
    \end{align*}
    To complete the proof, we observe the inclusion
    \[
    \Sigma_{N,M_f}(\tilde{\bb{D}})\subseteq\Sigma_{N}(\bb{D}),
    \]
    holds by construction. Consequently, the approximation error over $\Sigma_{N}(\bb{D})$ admits the upper bound
    \begin{align*}
        \inf_{f_N\in\Sigma_{N}(\bb{D})}
        \norm{f-f_N}{W^{n,r}(\Omega)}
        &\leq
        \inf_{f_N\in\Sigma_{N,M_f}(\tilde{\bb{D}})}
        \norm{f-f_N}{W^{n,r}(\Omega)}
        \\
        &\leq
        4C_{2, W^{n,r}(\Omega)}  K_{ \tilde{\bb{D}}}N^{-\frac{1}{2}}C\norm{f}{ M^{p,q}_{m}}.
    \end{align*}
    This establishes the claimed approximation bound.
\end{proof}

\begin{remark}
    Note that \cref{thm:approximation_sobolev_space_local} holds in particular  when 
    $$
        s_1=\frac{d+1}{p'},\, s_2=n + 1 +\frac {d+1}{q'}.
    $$
    Furthermore, in \cref{eq:approximation_bound} we have full control over the constant, including its exact dependence on the relevant parameters as shown in the proof of \cref{thm:approximation_sobolev_space_local}.
\end{remark}

We highlight that  \cref{thm:approximation_sobolev_space_local} for
$p=q=1$ gives the approximation result for the weighted  Feichtinger algebra $M^1_m(\rd)$ as follows: 
\begin{corollary}[Local Approximation for Feichtinger's Algebra]
\label{cor:Feichtinger_approximation_sobolev_space_local} 
  Under the assumptions of  \cref{thm:approximation_sobolev_space_local}, with $m(y,\eta) =(1\otimes v_{n+1})(y,\eta)=v_{n+1}(\eta)$ and
for every $f\in M^{1}_m(\rr{d})$,
    there exists a constant $C>0$ such that
\begin{align*}
        \inf_{f_{N}\in\Sigma_{N}(\bb{D})}\norm{f-f_N}{W^{n,r}(\Omega)}
        &\leq C N^{-\frac{1}{2}}
        \norm{f}{M^{1}_m(\rr{d})},
\end{align*}
    for all $N\in\nn{}$.
\end{corollary}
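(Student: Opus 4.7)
The plan is to derive this corollary as an immediate specialization of Theorem~\ref{thm:approximation_sobolev_space_local} to the case $p = q = 1$. First, I would verify that this choice lies in the admissible regime of the theorem: we have $0 < p = 1 < \infty$, and $0 < q = 1 \le 2 \le r$ by hypothesis on $r$, so both index conditions are met.

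Next, I would identify the correct weight. Inspecting condition \eqref{eq:indices}, the choice $p = 1$ falls in the first branch ($0 < p \le 1$), forcing $s_1 = 0$; similarly, $q = 1$ falls in the first branch ($0 < q \le 1$), forcing $s_2 = n+1$. Therefore the weight prescribed by Theorem~\ref{thm:approximation_sobolev_space_local} becomes
\[
m = v_{s_1} \otimes v_{s_2} = v_0 \otimes v_{n+1} = 1 \otimes v_{n+1},
\]
which is precisely the weight appearing in the statement of the corollary (viewed as a function on $\rr{2d}$ that depends only on the frequency variable $\eta$).

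With these parameters fixed, I would directly invoke Theorem~\ref{thm:approximation_sobolev_space_local} to obtain
\[
\inf_{f_{N}\in\Sigma_{N}(\bb{D})} \|f - f_N\|_{W^{n,r}(\Omega)} \le C'\, N^{-1/2}\, |\Omega|^{1/r}\, \|f\|_{M^{1,1}_m(\rr{d})},
\]
for some constant $C' > 0$ and every $f \in M^{1,1}_m(\rr{d})$. Using the convention $M^1_m = M^{1,1}_m$ introduced in Section~\ref{sec:modulation_spaces}, and absorbing the finite geometric factor $|\Omega|^{1/r}$ into a new constant $C := C'\,|\Omega|^{1/r}$ (which is legitimate since $\Omega$ is a fixed bounded domain, hence $|\Omega|^{1/r}$ does not depend on $f$ or $N$), the desired estimate follows.

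There is no substantive obstacle here: the corollary is a bookkeeping specialization. The only point requiring care is the correct reading of the weight prescription \eqref{eq:indices} in the sub-unit regime, where the weight indices are constant rather than of the strict inequality form that appears when $p > 1$ or $q > 1$; this is exactly what ensures that the endpoint case $p = q = 1$ gives rise to the natural weighted Feichtinger algebra $M^1_{1 \otimes v_{n+1}}(\rr{d})$ without any additional loss in the weight.
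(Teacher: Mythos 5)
Your proposal is correct and coincides with the paper's intent: the corollary is stated there as a direct specialization of Theorem~\ref{thm:approximation_sobolev_space_local} to $p=q=1$, with the weight read off from \eqref{eq:indices} as $1\otimes v_{n+1}$, exactly as you do. Your handling of the $|\Omega|^{1/r}$ factor (absorbing it into the constant, legitimate since $\Omega$ is fixed) fills in the only bookkeeping detail the paper leaves implicit.
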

A special example of weighted modulation space is the Shubin-Sobolev space ${Q}^{s}$. 

\begin{corollary}[Local Approximation for Sobolev and Shubin-Sobolev Spaces]
\label{cor:Hilbert_approximation_sobolev_space_local} 
     Consider $n\in \zzp{}$, $ r\geq 2$, and a bounded domain $\Omega\subset\rr d$. Under the dictionary  assumptions of \cref{thm:approximation_sobolev_space_local},  for any $$s_1>\frac d2,\qquad s_2>n+1+\frac d2,$$ 
     we have
     \begin{align}\label{eq:Hilbert_approximation_bound_L}
        \inf_{f_{N}\in\Sigma_{N}(\bb{D})}\norm{f-f_N}{W^{n, r}(\Omega)}
        &\leq  C  N^{-\frac{1}{2}} \begin{cases}\norm{f}{{Q}^{s_2}}\\
\norm{f}{L^{2}_{s_1}}+\norm{f}{\FL{2}_{s_2}}.
\end{cases}
    \end{align}
\end{corollary}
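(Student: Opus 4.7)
The plan is to reduce the statement to the Hilbert case $p=q=2$ of \cref{thm:approximation_sobolev_space_local} and then translate the resulting weighted modulation norm into the more classical Shubin–Sobolev and weighted $L^2$/Fourier–Lebesgue norms by means of \cref{lem:shubin}. Apply \cref{thm:approximation_sobolev_space_local} with $p=q=2$: since $p'=q'=2$, the prescriptions in \eqref{eq:indices} specialize to exactly $s_1>d/2$ and $s_2>n+1+d/2$, which are the hypotheses of the present corollary. Taking $m=v_{s_1}\otimes v_{s_2}$, the theorem produces
\[
\inf_{f_N\in\Sigma_N(\bb{D})}\norm{f-f_N}{W^{n,r}(\Omega)}
\le C\,N^{-1/2}\,|\Omega|^{1/r}\norm{f}{M^{2,2}_{v_{s_1}\otimes v_{s_2}}(\rr d)},
\]
and the factor $|\Omega|^{1/r}$ is absorbed into a new constant $C$, since $\Omega$ is a fixed bounded domain.

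Next, I would bound the modulation norm on the right-hand side in terms of the weighted $L^2$ and weighted Fourier–Lebesgue norms, establishing the continuous embedding
\[
L^2_{s_1}(\rr d)\cap \mathscr{F}L^2_{s_2}(\rr d)\hookrightarrow M^{2,2}_{v_{s_1}\otimes v_{s_2}}(\rr d),
\quad
\norm{f}{M^{2,2}_{v_{s_1}\otimes v_{s_2}}}\lesssim \norm{f}{L^2_{s_1}}+\norm{f}{\mathscr{F}L^2_{s_2}}.
\]
The proof uses the fundamental identity of time–frequency analysis, $V_\phi f(x,\xi)=e^{-2\pi i x\cdot\xi}V_{\hat\phi}\hat f(\xi,-x)$, to express the $M^{2,2}_{v_{s_1}\otimes v_{s_2}}$ integral in two equivalent ways and to split the weight factors between the spatial and frequency variables. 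Combined with Plancherel's theorem (integrating out one variable at a time) and the submultiplicativity of the polynomial weights $v_{s_j}$ against the Schwartz windows $\phi,\hat\phi\in\mathscr{S}(\rr d)$, this yields the inequality above and hence the second bound in \eqref{eq:Hilbert_approximation_bound_L}.

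The first bound then follows from the second via \cref{lem:shubin}. Indeed, by that lemma,
\[
\norm{f}{Q^{s_2}}\asymp \norm{f}{L^2_{s_2}}+\norm{f}{\mathscr{F}L^2_{s_2}}.
\]
Up to relabeling, we may choose $s_1$ in the admissible range $s_1>d/2$ so that $s_1\le s_2$ (this is allowed because $s_2>n+1+d/2>d/2$); then $\norm{f}{L^2_{s_1}}\le \norm{f}{L^2_{s_2}}$, whence
\[
\norm{f}{L^2_{s_1}}+\norm{f}{\mathscr{F}L^2_{s_2}}
\lesssim \norm{f}{L^2_{s_2}}+\norm{f}{\mathscr{F}L^2_{s_2}}
\asymp \norm{f}{Q^{s_2}}.
\]
Feeding this into the second inequality yields the Shubin–Sobolev bound.

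I expect the main technical obstacle to be the quantitative verification of the embedding $L^2_{s_1}\cap \mathscr{F}L^2_{s_2}\hookrightarrow M^{2,2}_{v_{s_1}\otimes v_{s_2}}$, since the tensor-product weight $v_{s_1}(x)v_{s_2}(\xi)$ is not dominated pointwise by a diagonal weight of the form $v_{s}(x,\xi)$; the estimate must therefore exploit cancellation between the two Plancherel representations of $V_\phi f$ rather than a crude weight comparison. This is a careful but essentially routine STFT calculation in the spirit of Shubin's original characterization underlying \cref{lem:shubin}, but with tensor-product rather than diagonal phase-space weights.
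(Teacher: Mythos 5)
Your step 1 is sound and is essentially the paper's route: with $p=q=2$ the conditions \eqref{eq:indices} become exactly $s_1>d/2$ and $s_2>n+1+d/2$, and \cref{thm:approximation_sobolev_space_local} yields the rate with $\norm{f}{M^{2}_{v_{s_1}\otimes v_{s_2}}}$ on the right-hand side (internally this uses the same embedding $M^2_{v_{s_1}\otimes v_{s_2}}\hookrightarrow M^1_{1\otimes v_{n+1}}$ from \cref{thm:embeddings} that the paper's proof quotes). The genuine gap is your step 2: the embedding $L^2_{s_1}\cap\FL{2}_{s_2}\hookrightarrow M^{2}_{v_{s_1}\otimes v_{s_2}}$ with $\norm{f}{M^{2}_{v_{s_1}\otimes v_{s_2}}}\lesssim\norm{f}{L^{2}_{s_1}}+\norm{f}{\FL{2}_{s_2}}$ is false; the true inclusion runs the other way. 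Since $\langle x\rangle^{s_1}\langle\xi\rangle^{s_2}\ge\max\{\langle x\rangle^{s_1},\langle\xi\rangle^{s_2}\}$ for $s_1,s_2\ge0$, one has $M^{2}_{v_{s_1}\otimes v_{s_2}}\hookrightarrow L^2_{s_1}\cap\FL{2}_{s_2}$, and the converse fails: for a time-frequency shifted Gaussian $f_R=M_{\xi_R}T_{x_R}g$ with $|x_R|=|\xi_R|=R$ one has $|V_\phi f_R(x,\xi)|=|V_\phi g(x-x_R,\xi-\xi_R)|$, hence $\norm{f_R}{M^{2}_{v_{s_1}\otimes v_{s_2}}}\asymp R^{s_1+s_2}$, while $\norm{f_R}{L^{2}_{s_1}}+\norm{f_R}{\FL{2}_{s_2}}\asymp R^{s_1}+R^{s_2}$, so no constant can work. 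No ``cancellation between the two Plancherel representations'' can rescue this, because both sides of your claimed inequality depend only on the moduli $|V_\phi f|$, $|f|$, $|\hat f|$. The identification in \cref{lem:shubin} works solely because the radial weight satisfies the pointwise equivalence $\langle(x,\xi)\rangle^{s}\asymp\langle x\rangle^{s}+\langle\xi\rangle^{s}$, which is precisely what a tensor-product weight $v_{s_1}\otimes v_{s_2}$ does not satisfy; your step 3 (deducing the $Q^{s_2}$ bound from step 2) therefore collapses as well.

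For comparison, the paper's own proof is terse but does not pass through your intermediate equivalence: it stays with the tensor-weight embedding $M^2_{v_{s_1}\otimes v_{s_2}}\hookrightarrow M^1_{1\otimes v_{n+1}}$ (valid exactly for $s_1>d/2$, $s_2>n+1+d/2$) and then invokes \cref{lem:shubin} to express the conclusion in the $Q^{s_2}$ and $L^2_{s_1}$, $\FL{2}_{s_2}$ scales. What any complete argument must supply at that point is a bound of the variation norm, equivalently of $\norm{f}{M^1_{1\otimes v_{n+1}}}$, directly by $\norm{f}{Q^{s_2}}$ or by $\norm{f}{L^{2}_{s_1}}+\norm{f}{\FL{2}_{s_2}}$; for instance, Cauchy--Schwarz against the radial weight gives $Q^{s_2}=M^2_{v_{s_2}}\hookrightarrow M^1_{1\otimes v_{n+1}}$ when $s_2>n+1+d$, but such a bound does not follow from the tensor-weight statement you reduce to, and the lemma you propose in its place is not true.
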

\begin{proof}
    The proof is a consequence of \cref{thm:approximation_sobolev_space_local}, the embedding relations in Theorem \ref{thm:embeddings}, and 
the characterization in Lemma \ref{lem:shubin}. In detail,
$$M^2_{v_{s_1}\otimes v_{s_2}}(\rr{d})\hookrightarrow M^1_{1\otimes v_{n+1}}(\rr{d})$$
if and only if $s_1>d/2$ and $s_2>n+1+d/2$.
\end{proof}
The inequality in \eqref{eq:Hilbert_approximation_bound_L} can be understood as an alternative formulation of the uncertainty principle, where the decay of $f$ and $\widehat{f}$ quantifies the time-frequency concentration.\par
Locally, modulation spaces coincide with Fourier-Lebesgue spaces, so another consequence of Theorem \ref{thm:approximation_sobolev_space_local} is the following. 

\label{prop:supported_approximation_sobolev_space_local} 
\begin{proof}%
    
\end{proof}

A particular instance of Fourier-Lebesgue space for $p=1$ is the Barron space, cf. equality \eqref{eq:Barron-spaces} above.
One can then restate Proposition \ref{prop:supported_approximation_sobolev_space_local} for this case: 
\begin{corollary}[Local Approximation in Barron Spaces]\label{cor:Barron}
    Consider $n\in \zzp{}$, and a bounded domain $\Omega\in\rr d$. Under the dictionary  assumptions of \cref{thm:approximation_sobolev_space_local},  for any $r\geq 2$, $f\in B_{v_{n+1}}(\rr d)$,
    there exists a constant $C>0$ such that 
\begin{align*}
        \inf_{f_{N}\in\Sigma_{N}(\bb{D})}\norm{f-f_N}{W^{n, r}(\Omega)}
        &\leq C N^{-\frac{1}{2}}\abs{\Omega + \Omega}
        \norm{f}{B_{v_{n+1}}},
    \end{align*}
    for all $N\in\nn{}$.
    If  $\Omega$ is convex, then there exists a constant $C>0$ such that
    \begin{align*}
        \inf_{f_{N}\in\Sigma_{N}(\bb{D})}\norm{f-f_N}{W^{n, r}(\Omega)}
        &\leq 2C N^{-\frac{1}{2}}\abs{\Omega}
        \norm{f}{B_{v_{n+1}}}
    \end{align*}
    for all $N\in\nn{}$. 
\end{corollary}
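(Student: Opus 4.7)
The plan is to deduce the statement directly from Proposition~\ref{prop:supported_approximation_sobolev_space_local} by specializing the exponents to $p=q=1$ and then replacing the $\mathscr{F}L^1_{v_{n+1}}$ norm by the equivalent Barron norm $\|\cdot\|_{B_{v_{n+1}}}$ via the identification \eqref{eq:Barron-spaces}. Since the equivalence of norms in \eqref{eq:Barron} is established up to a multiplicative constant, this replacement will only modify the constant $C$, not the rate $N^{-1/2}$.

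First, I would verify that the choice $p=q=1$ is admissible for Proposition~\ref{prop:supported_approximation_sobolev_space_local}. The required hypothesis is $0<p<\infty$ and $0<q\le 2\le r$, which is satisfied since $r\ge 2$ by assumption. Next, one has to check that the index condition in \eqref{eq:indices} is met for the weight $m=1\otimes v_{s_2}$: in the regime $q\le 1$ the condition reduces to $s_2=n+1$, which is exactly the weight appearing in $B_{v_{n+1}}$. No constraint on $s_1$ is active here because we are taking $m=1\otimes v_{n+1}$, so the spatial weight is trivial.

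Second, I would apply Proposition~\ref{prop:supported_approximation_sobolev_space_local} with these exponents, giving
\[
\inf_{f_N\in\Sigma_N(\bb{D})}\|f-f_N\|_{W^{n,r}(\Omega)}
\le C\, N^{-1/2}\,|\Omega+\Omega|\,\|f\|_{\mathscr{F}L^1_{v_{n+1}}}
\]
for any $f\in M^{1}_{1\otimes v_{n+1}}(\rr{d})$. Using the equivalence \eqref{eq:Barron-spaces}, we have $f\in B_{v_{n+1}}(\rr{d})$ if and only if $f\in \mathscr{F}L^1_{v_{n+1}}(\rr{d})$ (with equivalent norms via \eqref{eq:Barron}), and one might additionally invoke Lemma~\ref{lem:Barron-Feichtinger-alg} to ensure that $f$ lies in the modulation-space setting where the proposition applies, together with the embedding $M^1_{1\otimes v_{n+1}}\hookrightarrow \mathscr{F}L^1_{v_{n+1}}$ implicit in the argument. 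Substituting the Barron norm then yields the first estimate.

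For the convex case, the only modification is to invoke Corollary~\ref{cor:FourierLebesgue_embedding_in_modulation_space_convex} (with $p=1$) rather than Lemma~\ref{cor:FourierLebesgue_embedding_in_modulation_space} inside the proof of Proposition~\ref{prop:supported_approximation_sobolev_space_local}; this replaces the factor $|\Omega+\Omega|$ by $2|\Omega|$ in the bound, which is exactly the improved form stated in the corollary. There is no genuine obstacle here: the only small care point is bookkeeping the multiplicative constant arising from the equivalence $\|\cdot\|_{\mathscr{F}L^1_{v_{n+1}}}\asymp \|\cdot\|_{B_{v_{n+1}}}$, which is absorbed into the final constant $C$.
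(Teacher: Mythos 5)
Your proposal is correct and follows essentially the same route as the paper, which deduces the corollary by specializing Proposition~\ref{prop:supported_approximation_sobolev_space_local} (to $q=1$, $s_2=n+1$, with the convex case already built into that proposition) and invoking the identification $B_{v_{n+1}}(\rr{d})=\mathscr{F}L^1_{v_{n+1}}(\rr{d})$ from \eqref{eq:Barron-spaces}. The only inessential slip is your appeal to Lemma~\ref{lem:Barron-Feichtinger-alg}, whose embedding runs in the opposite direction ($M^1_{1\otimes v_{n+1}}\hookrightarrow B_{v_{n+1}}$) and is not needed for the argument.
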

\begin{proof}
    It follows from Proposition \ref{prop:supported_approximation_sobolev_space_local} and the equality \eqref{eq:Barron-spaces}.   
\end{proof}
\begin{remark}
   (1) Corollary \ref{cor:Barron} generalizes the result by Siegel and Xu in \cite{Siegel20ApproximationRatesNeural} in two directions: by extending the approximation to any dimension $d\geq1$, and by considering the more general class of Sobolev spaces $W^{n,r}(\Omega)$ instead of   $W^{n,2}(\Omega)=H^n(\Omega)$, cf. Corollary 1 in the aforementioned paper.\\
\end{remark}

After establishing approximation results for functions in weighted modulation spaces $ M^{p,q}_m$ by means of shallow neural networks $f_N\in \Sigma_N(\bb{D})$ with error norm $W^{n,r}(\Omega)$ measured on a bounded domain$\Omega$, we now turn to the unbounded domain case. Unlike the previous case, where boundedness of the domain simplifies the control of the approximation errors, working on the whole space $\rd$ requires additional care.

\begin{theorem}[Global Approximation]\label{thm:approximation_sobolev_space_G} 
     Consider $n\in \zzp{}$, $0<p,q<\infty$, $r\geq 2$, and an activation function
    $\sigma\in W^{k, \infty}(\rr{})\setminus\{0\}$ (with $k\geq n$). Fix  a bounded domain $\Omega\subset \rrd$ and define the dictionary 
    $\bb{D}_\Omega $ 
    as follows:
    \begin{equation}\label{eq:D-Omega}
         \bb{D}_\Omega = \{ x \mapsto \sigma\left(\tfrac{\eta\cdot x}{\tau}+b\right)
        \varphi(\tfrac{\eta\cdot x}{\tau}+b-t)\phi(x-y) \text{ such that }(y,\eta, b) \in\Omega\times\rr{d}\times \rr{}\},
    \end{equation}
    with $t,\tau$ satisfying \textnormal{\textbf{Condition (A)}}.
    Consider  the weight $m =(v_{s_1}\otimes v_{s_2})$ with $s_1,s_2$ satisfying \eqref{eq:indices}. 
    Then, for every $f\in M^{p,q}_m(\rd)$,
    there exists a constant $C>0$ such that
    \begin{align*}
        \inf_{f_{N}\in\Sigma_{N}(\bb{D}_\Omega)}\norm{f-f_N}{W^{n,r}(\rd)}
        &\leq C N^{-\frac{1}{2}}
        \norm{f}{M^{p,q}_m(\rd)},
    \end{align*}
    for all $N\in\nn{}$.
\end{theorem}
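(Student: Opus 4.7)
The proof follows the scheme of \cref{thm:approximation_sobolev_space_local}, with two new ingredients to accommodate the unbounded error domain $\rr d$ and the restriction $y\in\Omega$ in $\bb D_\Omega$. The starting point is identical: use the STFT inversion $f=\int_{\rr{2d}} V_\phi f(y,\eta)\,M_\eta T_y\phi\,dy\,d\eta$ and insert the phase identity \eqref{eq:phase_identity} to realize $f$ as an integral against the atoms $\rho(x,y,\eta,b)$ of \eqref{eq:rho}, initially with $y\in\rr d$.

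To enforce $y\in\Omega$, I would exploit the translation covariance
\[
\rho(x;\,y'+k,\,\eta,\,b)=T_k\rho(\cdot;\,y',\,\eta,\,b+\eta\cdot k/\tau)(x),
\]
which follows by direct inspection of the defining expression for $\rho$. Choosing a fundamental cell $Q\subset \Omega$ so that $\rr d=\bigsqcup_{k\in\zz d}(Q+k)$ and splitting the $y$-integration accordingly, the substitution $y=y'+k$ with $y'\in Q$, together with a change of variable in $b$, produces a decomposition $f=\sum_{k\in\zz d} T_k f_k$, where each $f_k$ is an integral of atoms in $\bb D_\Omega$ with coefficients obtained by shifting $V_\phi f$. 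By translation invariance of $W^{n,r}(\rr d)$, the translated summands can be reassembled into a single representing measure $\mu_f$ supported on $\Omega\times\rr d\times\rr{}$; its total variation is controlled, via $\ell^1$-summability of the sum over $k$, by the weighted $M^1$-type quantity $\int_{\rr d}\int_{\rr d} v_{n+1}(\eta)|V_\phi f(y,\eta)|\,dy\,d\eta$, which by \cref{thm:embeddings} under \eqref{eq:indices} is bounded by $\|f\|_{M^{p,q}_m}$.

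The remaining steps parallel the local case. I would introduce the weight $\vartheta(\eta,b)=v_n(\eta)v_s\bigl((|b|-R_\Omega|\eta/\tau|)_+\bigr)$ with $s<-1$ and $R_\Omega=\sup_{y\in\Omega}|y|$, form the weighted dictionary $\tilde{\bb D}_\Omega$, and verify that $\sup_{h\in\tilde{\bb D}_\Omega}\|h\|_{W^{n,r}(\rr d)}<\infty$. Now we integrate in $x$ over all of $\rr d$, but the uniform bound still holds because $\phi\in\mathscr S(\rr d)$ yields $\|\partial^\gamma\phi(\cdot-y)\|_{L^r(\rr d)}=\|\partial^\gamma\phi\|_{L^r(\rr d)}$ independent of $y$, and $\varphi\in\mathscr S(\rr{})$ together with $\sigma\in W^{k,\infty}(\rr{})$ give uniform control of the remaining factors. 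Since $W^{n,r}(\rr d)$ is type-$2$ for $r\ge 2$, \cref{prop:approximation_type2} applied to $\tilde{\bb D}_\Omega$ yields the $N^{-1/2}$ rate.

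The main obstacle is the tiling step: one must show that the translation sum $\sum_k T_k f_k$ can be identified with a single integral over $\bb D_\Omega$ whose variation norm is dominated by $\|f\|_{M^{p,q}_m}$, without introducing either a dimensional penalty or an unbounded factor from the $k$-summation. Condition \eqref{eq:indices} on $s_1$ is designed precisely so that the discrete sum over $k$ is controlled by a continuous integral in $y$, allowing everything to collapse back to the modulation norm via \cref{thm:embeddings}.
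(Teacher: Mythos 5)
Your proposal diverges from the paper at the two places where the global case is genuinely delicate, and both of your replacements contain gaps. First, the tiling/reassembly step is not valid. Your covariance identity $\rho(x;\,y'+k,\eta,b)=T_k\rho(\cdot;\,y',\eta,\,b+\eta\cdot k/\tau)(x)$ is correct, but it points the wrong way: it expresses an atom with spatial shift $y'+k\notin\Omega$ as a \emph{translate} of an atom of $\bb{D}_\Omega$, not as an element of $\bb{D}_\Omega$. The variation space $\mathcal{K}(\bb{D}_\Omega)$ is not translation invariant even though the $W^{n,r}(\rr d)$ norm is, because every atom carries the fixed localization $\phi(\cdot-y)$ with $y\in\Omega$ and a shift of $\phi$ cannot be absorbed into the ridge parameters $(\eta,b)$; hence $\sum_k T_k f_k$ cannot be ``reassembled into a single representing measure supported on $\Omega\times\rr d\times\rr{}$'', and you yourself flag this as unresolved. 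Moreover, the condition on $s_1$ in \eqref{eq:indices} has nothing to do with controlling a lattice sum: in the paper it comes solely from the embedding $M^{p,q}_{v_{s_1}\otimes v_{s_2}}\hookrightarrow M^{1}_{1\otimes v_{n+s}}$ of \cref{thm:embeddings}, used to bound the total variation of the STFT-inversion measure; the paper performs no re-localization of the $y$-variable at all, and the boundedness of $\Omega$ enters only in the uniform bound on the atoms.

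Second, your uniform-boundedness argument fails because you keep the \emph{local} weight $\vartheta(\eta,b)=v_n(\eta)\,v_s\bigl((|b|-R_\Omega|\eta/\tau|)_+\bigr)$ with $s<-1$. Dividing by this weight multiplies the atom by $\langle(|b|-R_\Omega|\eta/\tau|)_+\rangle^{|s|}$, which is unbounded in $(\eta,b)$; in the local proof this growth was compensated through the pointwise inequality $|\aff_{\eta,b}(x)|\ge(|b|-R_\Omega|\eta/\tau|)_+$, valid only for $x\in\Omega$. Once $x$ ranges over $\rr d$, your splitting (sup norm on the $\sigma$ and $\varphi$ factors, $L^r(\rr d)$ on $\partial^\gamma\phi(\cdot-y)$) gives $\sup_{x}|\varphi^{(|\beta|)}(\aff_{\eta,b}(x)-t)|=\|\varphi^{(|\beta|)}\|_{L^\infty}$, with no decay in $b$ whatsoever, so $K_{\tilde{\bb{D}}_\Omega}=\infty$ (for instance along $|b|\asymp 2R_\Omega|\eta|/|\tau|$, $|\eta|\to\infty$, where $v_n(\eta)$ is already spent absorbing $|\eta|^{|\alpha|-|\gamma|}$). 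This is exactly why the paper states that the local weight ``is no longer applicable'' and instead takes $\vartheta(\eta,b)=v_{n+s}(\eta)/v_s(b)$ with $s>1$, proving the uniform bound via the weighted splitting $\norm{v_{-u}\varphi^{(|\beta|)}(\aff_{\eta,b}(\cdot)-t)}{L^{\infty}}\norm{v_u\partial^\gamma\phi(\cdot-y)}{L^\infty}$, the decay estimate $v_{-s}(\min\{1,|\tau|/|\eta|\}|b|)$ from \cite[Lemma 32]{Abdeljawad24WeightedApproximationBarron}, and a two-case comparison of $|\eta|$ with $|\tau|$; it is only there that $y\in\Omega$ is used, through $\norm{v_u\partial^\gamma\phi(\cdo-y)}{L^\infty(\rr d)}\le C_{\gamma,u,\Omega}$. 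Without repairing both the representation step and the weight/uniform-bound step, Maurey's bound (\cref{prop:approximation_type2}) cannot be applied, so the proposal does not establish \cref{thm:approximation_sobolev_space_G}.
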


\begin{proof}
    Analogously to the proof of \cref{thm:approximation_sobolev_space_local}, we first apply \cref{eq:phase_identity} for nontrivial window function $\varphi\in \mathscr{S}(\rr{})$ and subsequently express $f$ in the following integral form:
    \begin{equation}\label{eq:f_representation}
                f(x)
                =
                C_{\sigma ,\varphi}\int_{\rr{2d}}\int_{\rr{}}
                 \rho(x, y, \eta, b)
                \,
                e^{-2\pi ib\cdot \tau}
                \,
                V_\phi{f}(y, \eta)
                 \,db\,dy\,d\eta,
    \end{equation}
    where $f$ and $\phi$ belong to $\mathscr{S}(\rd)\setminus\{0\}$ such that $\phi$ is a positive function with $\norm{\phi}{L^2}=1$. Furthermore, using \textnormal{\textbf{Condition (A)}},
    \begin{align*}
                C_{\sigma ,\varphi} 
                &=               \left|(V_\varphi \sigma)(t,\tau ) \right|^{-1}
                 \\[.5em]
                \rho(x, y, \eta, b) &=  \sigma\left(\aff_{\eta, b}\left(x\right)\right)
                \varphi(\aff_{\eta, b}\left(x\right)-t)\phi(x-y)
                \\[.5em]
                 \aff_{\eta, b}(x) &= \aff_{\tau, \eta, b}(x)= \frac{\scal x\eta}{\tau}+b.
    \end{align*}   
    Since the parameters $t$ and $\tau\neq0$ are fixed constants in $\rr{}$, we suppress them in  our notation.
    Based on the representation of the signal $f$ in \cref{eq:f_representation}, we introduce the dictionary $\bb{D}_\Omega$ as in \eqref{eq:D-Omega}.

    To ensure that the dictionary remains uniformly bounded in $W^{n, r}(\rrd)$ and that the target function $f$ lies in the associated variation spaces, we introduce a suitable weight function in order to control the behavior at infinity and guarantee convergence. Since the domain in the $x$-variable is unbounded, the weight used in the proof of  \cref{thm:approximation_sobolev_space_local} is no longer applicable. Instead, we define the weight $\vartheta$ as follows:
    \[
           \vartheta(\eta, b) = \frac{v_{n + s}(\eta)}{v_s(b)}, \text{ such that } s>1.
    \]
    Accordingly, the modified dictionary $ \tilde{\bb{D}}_\Omega$ takes the form:
    \begin{align*}
                \tilde{\bb{D}}_\Omega&=\left\{
                   \frac{v_s(b)}{v_{n +s}(\eta)} \rho(\cdo, y, \eta, b)
                 : \rrd \to \rr{} \big|\; (y, \eta, b)\in \Omega\times\rrd \times \rr{}
                 \right\}
    \intertext{and the associated measure is given by} 
                d \mu_f(y, \eta,b)
                &=
                C_{\sigma ,\varphi}\frac{v_{n + s}(\eta)}{v_s(b)} 
                e^{-i {b}\cdot \tau}
                    V_\phi{f}(y, \eta)
                     \,db\,dy\,d\eta,
    \end{align*}
    which in turn allows us to represent $f$ in the integral form containing all the required components:
    \[
        f=\int_{\tilde{\bb{D}}_\Omega} i_{\tilde{\bb{D} }_\Omega\rightarrow W^{n,r}(\rrd)} d \mu_f.
    \]
    In order to derive an upper bound on the  variation norm of  $f$, we recall that
    \[
        \norm{f}{\mathcal{K}(\tilde{\bb{D}}_\Omega)}
        =
        \inf\left\{\norm{\mu}{}:
        f=\int_{\tilde{\bb{D}}_\Omega}i_{\tilde{\bb{D}}_\Omega \to W^{n,r}(\rrd)}\,d \mu\right\},
    \]
    where the infimum is taken over all Borel measures $\mu$ on $\tilde{\bb{D}}_\Omega$. In particular,
    \[
         \norm{f}{\mathcal{K}(\tilde{\bb{D}}_\Omega)}
         \leq 
         \norm{\mu_f}{L^1}.
    \]
    From the previous inequality, we obtain
    \begin{align}\notag
                \norm{f}{\mathcal{K}(\tilde{\bb{D}}_\Omega)}
                &\le
                \int_{\rr{2d}}\int_{\rr{}}
                C_{\sigma ,\varphi}\frac{v_{n + s}(\eta)}{v_{s}(b)} 
                    \abs{V_\phi{f}(y, \eta)}
                     \,db\,dy\,d\eta
                     \\[.5em]\notag
                     &\le 
                     C_{\sigma ,\varphi}
                     \int_{\rr{2d}}\int_{\rr{}}
                    v_{-s}(b)\,db\,
                    v_{n + s}(\eta)
                    \abs{V_\phi{f}(y, \eta)}
                     \,dy\,d\eta\notag
                     \\[.5em]
                     &=
                     C_{\sigma ,\varphi}\,
                     \sqrt{\pi}\frac{\Gamma(\frac{s-1}{2})}{\Gamma(\frac{s}{2})}
                     \int_{\rr{2d}}
                     v_{n + s}(\eta)
                    \abs{V_\phi{f}(y, \eta)}
                     \,dy\,d\eta.\label{eq:constant}
    \end{align}
   Using the inclusion relations of Theorem \ref{thm:embeddings} we majorize \eqref{eq:constant} as follows:
    \begin{equation}\label{eq:cpq}
    \int_{\rr{2d}}
        v_{n + s}(\eta)
        \abs{V_\phi{f}(y, \eta)}
        \,dy\,d\eta=
        \|f\|_{M^1_{1\otimes v_{n+s}}}
        \le C_{p,q}
        \,
        \|f\|_{M^{p,q}_{v_{s_1}\otimes v_{s_2}}}
    \end{equation}
    where the index $s_1$ satisfies \eqref{eq:indices}, and
$$s_2>n+s+\frac{d}{q'},$$    
    for a suitable constant $C_{p,q}>0$. The arguments above work for any index $s>1$, this allows to extend the range of $s_2$ as in \eqref{eq:indices}, providing to choose $s>1$ accordingly.

For $m=v_{s_1}\otimes v_{s_2}$, we conclude that
        \begin{align}\label{eq:modulation_space_embedding_in_variation_space}
             \norm{f}{\mathcal{K}(\tilde{\bb{D}}_\Omega)}
                &\le
               C_{p,q}\,
                C_{\sigma ,\varphi}\,
                 \sqrt{\pi}\frac{\Gamma(\frac{s-1}{2})}{\Gamma(\frac{s}{2})}\, 
                \norm{f}{M^{p,q}_m}
                =
                C\norm{f}{M^{p,q}_m}.
        \end{align}
        A central task at this stage is to verify that the chosen dictionary is uniformly bounded in the Sobolev space $W^{n,r}(\rd)$. This follows from the properties of the window functions, the activation function, and the weights. In fact, all together ensure that the modified atom 
        \[
             \tilde{\rho}(x, y, \eta, b):= \frac{v_s(b)}{v_{n + s}(\eta)} \rho(x, y, \eta, b)
        \]
        is differentiable with respect to the $x$-variable up to the order $n$. Consequently, for every multi-index $\alpha$ with $\abs{\alpha}\le n$, and $C_{\alpha, \beta, \gamma}=\frac{\alpha!}{\beta!\gamma!(\alpha - \beta - \gamma)!}$, we obtain
         \begin{multline*}
            \norm{\partial^\alpha\tilde{\rho}\left(\cdo, y, \eta, b\right)}{L^{r}(\rrd)}
                =
            \frac{v_s(b)}{v_{n + s}(\eta)} 
            \norm{\partial^\alpha\left(\sigma\left(\aff_{\eta, b}\left(\cdo\right)\right)
            \varphi(\aff_{\eta, b}\left(\cdo\right)-t)\phi(\cdo-y)\right)}{L^{r}(\rrd)}
            \\
            \le 
            \frac{v_s(b)}{v_{n + s}(\eta)} 
            \sum _{\beta +\gamma\le \alpha}
            C_{\alpha, \beta, \gamma}
            \frac{\abs{\eta}^{|\alpha -\gamma|}}{|\tau|^{\abs{\alpha - \gamma}}}
            \norm{\sigma^{(|\alpha - \beta - \gamma|)}\left(\aff_{\eta, b}\left(\cdo\right)\right)
            \varphi ^{(|\beta|)}\left(\aff_{\eta, b}\left(\cdo\right)-t\right)\partial^\gamma \phi(\cdo - y)}{L^{r}(\rrd)}
            \\
            \le
            \frac{v_s(b)}{v_s(\eta)}
            \left(\frac{1+\abs{\tau}}{\abs{\tau}}\right)^n
            \sum _{\beta + \gamma \le \alpha}
            C_{\alpha, \beta, \gamma}
            \frac{\abs{\eta}^{|\alpha -\gamma|}}{v_n(\eta)}
            \norm{\sigma^{(|\alpha-\beta - \gamma|)}}{L^\infty(\rr{})}
            \norm{\varphi ^{(|\beta|)}\left(\aff_{\eta, b}\left(\cdo\right)-t\right)\partial^\gamma \phi(\cdo -y)}{L^r(\rrd)}.
        \end{multline*}
        Furthermore, we have
        \begin{multline*}
            \norm{\varphi ^{(|\beta|)}\left(\aff_{\eta, b}\left(\cdo\right)-t\right)\partial^\gamma \phi(\cdo -y)}{L^r(\rrd)}
            \\
            \leq
            \norm{v_{-u}(\cdo)
               \varphi ^{(|\beta|)}\left(\aff_{\eta, b}\left(\cdo\right)-t\right)}
                {L^{r}(\rrd)}
                \norm{v_u(\cdo)\partial^\gamma \phi(\cdo -y)}{L^\infty(\rrd)}.
        \end{multline*}
        Since $\varphi\in \mathscr{S}(\rr{})$, we have $\varphi v_k\in W^{\ell, p}(\rr{})$, for every $k,\ell,p\in \nn{}$. Applying   \cite[Lemma 32]{Abdeljawad24WeightedApproximationBarron} with parameters $\ell=0$, $p=r$, $k\geq s$, and $u>s$,  we obtain
        \begin{align*}
            \norm{v_{-u}
               \varphi ^{(|\beta|)}\left(\aff_{\eta, b}\left(\cdo\right)-t\right)}
                {L^{\infty}}
                &=
                 \norm{v_{-u}
               \varphi^{(|\beta|)} \left(\aff_{\eta, b}\left(\cdo\right)-t\right)}
                {W^{0, \infty}}
                \\
                &\le
                C_{\beta, \tau, d}
                v_{-s}(\min\{1, \abs{\tau}/\eta \}\abs{b}).
        \end{align*}
        This implies that
        \begin{equation}\label{eq:window_upper_bound}
                \frac{v_s(b)}{v_{s}(\eta)}
               \norm{v_{-u}
               \varphi ^{(|\beta|)}\left(\aff_{\eta, b}\left(\cdo\right)-t\right)}
                {L^{\infty}}
                \leq
                C_{\beta, \tau, d}
                 \frac{v_s(b)}{v_s(\eta)}
                 v_{-s}(\min\{1, \abs{\tau}/\eta \}\abs{b}).
        \end{equation}
        In order to establish a uniform upper bound for \cref{eq:window_upper_bound}, we distinguish two cases according to the relation between $\eta$ and $\tau$.
        \begin{itemize}
            \item Case 1: If $\abs{\eta}<\abs{\tau}$, then
                \begin{align*}
                    v_{-s}(\eta)
                    v_s(b)
                    v_{-s}(\min\{1,\abs{\tau}/\abs{\eta}\}\abs{b})
                    =
                    v_{-s}(\eta),
                \end{align*}
                which uniformly bounded.
            \item Case 2: If $\abs{\eta}\geq\abs{\tau}$, then
                \begin{align*}
                    v_{-s}(\eta)
                    v_s(b)
                    v_{-s}(\min\{1,\abs{\tau}/\abs{\eta}\}\abs{b})
                    \le
                    \frac
                        {v_s(b)}
                        {(\abs{\eta}+\abs{\tau}\abs{b})^s},
                \end{align*}
                which is uniformly bounded by $\abs{\tau}^{-s}$.
        \end{itemize}
        Combining both cases,  we conclude that
        \[
            \frac{v_s(b)}{v_s(\eta)}
                \norm{ v_{-u}
               \varphi ^{(|\beta|)}\left(\aff_{\eta, b}\left(\cdo\right)-t\right)}
                {L^{\infty}}
                \le
                C_{\beta, \tau, d}
                \min\{1, \abs{\tau}^{-s} \}.
        \]
        At this step, in order to obtain a uniform upper bound, it is necessary to assume that the set $\Omega\subset \rrd$ is bounded.
        Consequently, we get 
        \[
            \norm{v_{u}(\cdo)\partial^\gamma \phi(\cdo -y)}{L^\infty(\rrd)}
            \le
            C_{\gamma, u, \Omega}.
        \]
        As a consequence, we obtain
        \[
            \norm{\partial^\alpha\tilde{\rho}\left(\cdot, y, \eta, b\right)}{L^{r}(\rrd)}
            \le
            \norm{\sigma}{W^{k, \infty}(\rr{})}
                \left(\frac{1+\abs{\tau}}{\abs{\tau}}\right)^n
                \min\{1, \abs{\tau}^{-s} \}
                \sum _{\beta +\gamma \le \alpha}
                C_{\alpha, \beta, \gamma}
                C_{\gamma, u, \Omega}
                C_{\beta, \tau, d},
        \]
        where we used the fact that $\abs{\eta}^{\abs{\alpha - \gamma}}v_{-n}(\eta) \le 1$ and that $C_{\gamma, u, \Omega}, C_{\alpha, \beta, \gamma}$ and $C_{\beta, \tau, d}$ are positive constants.
        Similar to the proof of \cref{thm:approximation_sobolev_space_local}, a simple count of partial derivatives up to order $n$ yields the boundedness of the atoms $\tilde{\rho}$ in the Sobolev norm $W^{n, r}(\rrd)$. This, in turn, implies the uniform boundedness of the weighted dictionary $\tilde{\bb{D}}$ since the right-hand side of the preceding estimate is independent of $\eta$, $y$ and $b$.

        With the uniform boundedness of the dictionary in $W^{n,r}(\rd)$ established, we are now prepared to present the final bound.
        Specifically, for \( r \geq 2 \) and \( n \in \zzp{} \), the Sobolev space \( W^{n,r}(\rrd) \) is a type-2 Banach space; see \cite[Corollary A.6]{Brzezniak95StochasticPartialDifferential}. As shown earlier, \( \tilde{\bb{D}} _\Omega\subset W^{n,r}(\rrd) \) and is uniformly bounded, namely,
        \[
            K_{\tilde{\bb{D}}_\Omega} := \sup_{h \in \tilde{\bb{D}}_\Omega} \|h\|_{W^{n,r}(\rrd)} 
                = \sup \|\tilde{\rho}(\cdo, y, \eta, b)\|_{W^{n,r}(\rrd)} < \infty,
        \]
        where the supremum is taken over $y\in \Omega\subset\rrd, \eta\in \rrd, b\in\rr{}$. By \eqref{eq:cpq} we obtain in particular that \( f \in M^{1}_m(\rrd) \) with $m=1\otimes v_{n+s}$. Then, $f$ belongs to $W^{n,r}(\rrd)$ (see \cref{prop:embedding_in_fractional_Sobolev_spaces}). Furthermore, the embedding in \cref{eq:modulation_space_embedding_in_variation_space} implies that \( f \in K_{\tilde{\bb{D}}_\Omega} \) with \( M_f := \|f\|_{K_{\tilde{\bb{D}}_\Omega}} \). Applying Maurey's bound (see \cref{prop:approximation_type2}), we obtain
        \begin{align*}
            \inf_{f_N \in \Sigma_{N, M_f}(\tilde{\bb{D}}_\Omega)} \|f - f_N\|_{W^{n,r}(\rrd)} 
            &\leq 4C_{2, W^{n,r}(\rrd)} K_{\tilde{\bb{D}}_\Omega} N^{-1/2} \|f\|_{K_{\tilde{\bb{D}}_\Omega}} 
            \\
            &\leq
            4C_{2, W^{n,r}(\rrd)} K_{\tilde{\bb{D}}_\Omega} N^{-1/2} C \|f\|_{M^{p,q}_m(\rrd)}.
        \end{align*}
        Since \( \Sigma_{N, M_f}(\tilde{\bb{D}}_\Omega) \subseteq \Sigma_{N}(\bb{D}_\Omega) \), the same estimate carries over:
        \[
            \inf_{f_N \in \Sigma_{N}(\bb{D}_\Omega)} \|f - f_N\|_{W^{n,r}(\rrd)} 
            \leq 4C\,C_{2, W^{n,r}(\rrd)}\,
            K_{\tilde{\bb{D}}_\Omega}\, N^{-1/2} \,
            \|f\|_{M^{p,q}_m(\rrd)}.
        \]
        This concludes the proof.
\end{proof}
\begin{remark}
    The uniform constant $C_{p,q}$ in \eqref{eq:cpq} follows from the inclusion relations for modulation spaces in Theorem \ref{thm:embeddings}. Note that this allows to have  indices $0<p,q<\infty$. Of course small indices $p,q$ come at the expenses of bigger weights $v_{s_1}$ and $v_{s_2}$.  
    To obtain an explicit expression of $C_{p,q}$  one can employ Jensen's inequality for a smaller range of indices $p,q\geq 1$. We leave the details to the interested reader.
\end{remark}
Theorem \ref{thm:approximation_sobolev_space_G} for $p=q=1$ gives the global approximation for the weighted Feichtinger algebra:
\begin{corollary}[Global Approximation for Feichtinger's Algebra]
 Consider $ n\in \zzp{}$,  $r\geq 2$,  the dictionary and the activation function as in \cref{thm:approximation_sobolev_space_G}.
    If $m =(1\otimes v_{s_2})$ with
    $$
       s_2>n +1 ,
    $$
    then, for every $f\in M^{1}_m(\rr{d})$,
    there exists a constant $C>0$ such that
\begin{align*}
        \inf_{f_{N}\in\Sigma_{N}(\bb{D}_\Omega)}\norm{f-f_N}{W^{n,r}(\rr{d})}
        &\leq C N^{-\frac{1}{2}}
        \norm{f}{M^{1}_m(\rr{d})},
\end{align*}
    for all $N\in\nn{}$.
\end{corollary}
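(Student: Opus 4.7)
The plan is to derive this as a direct specialization of \cref{thm:approximation_sobolev_space_G} at $p=q=1$. The only substantive point is to check that the index conditions \eqref{eq:indices} collapse to the single hypothesis $s_2 > n+1$ stated in the corollary; no new estimate will be needed.

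First I would substitute $p=1$ into the first branch of \eqref{eq:indices}. Since $p\leq 1$ falls in the regime $s_1=0$, the weight $v_{s_1}$ reduces to the constant $1$, and $m = v_{s_1}\otimes v_{s_2} = 1\otimes v_{s_2}$, matching the form assumed in the corollary. For $q=1$ I would then revisit the embedding step leading to \eqref{eq:cpq} in the proof of \cref{thm:approximation_sobolev_space_G}: there the operational constraint on $s_2$ is $s_2 > n + s + d/q'$, where $s>1$ is the auxiliary parameter introduced through the weight $\vartheta(\eta,b) = v_{n+s}(\eta)/v_s(b)$. With $q=1$ one has $q'=\infty$ and hence $d/q'=0$, so the constraint becomes $s_2 > n + s$, which any $s_2 > n+1$ fulfills upon choosing $s>1$ sufficiently close to $1$.

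It then remains to invoke the conclusion of \cref{thm:approximation_sobolev_space_G} with these parameters for $f\in M^1_{1\otimes v_{s_2}}(\rr d)$. The constant $C$ in the statement will absorb $C_{\sigma,\varphi}$, the gamma factor $\sqrt{\pi}\,\Gamma((s-1)/2)/\Gamma(s/2)$ arising from $\int_{\rr{}} v_{-s}(b)\,db$, the type-$2$ constant $C_{2,W^{n,r}(\rr d)}$, and the uniform bound $K_{\tilde{\bb{D}}_\Omega}$ on the weighted dictionary. I do not foresee any real obstacle; the only subtlety is the strict inequality $s_2 > n+1$, which stems precisely from the requirement $s>1$ in the construction of $\vartheta$ and therefore prevents the limiting value $s_2 = n+1$ (admissible in the local setting of \cref{cor:Feichtinger_approximation_sobolev_space_local}, whose weight $v_n(\eta)v_s((|b|-R_\Omega|\eta/\tau|)_+)$ with $s<-1$ integrates to $v_{n+1}$) from being reached via this route.
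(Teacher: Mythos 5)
Your proposal is correct and follows exactly the route the paper intends: the corollary is obtained by specializing \cref{thm:approximation_sobolev_space_G} to $p=q=1$, so that $s_1=0$ gives $m=1\otimes v_{s_2}$ and, since $q'=\infty$, the operational constraint $s_2>n+s+\tfrac{d}{q'}$ from \eqref{eq:cpq} reduces to $s_2>n+s$ with some $s>1$, which is precisely $s_2>n+1$. Your remark explaining why the strict inequality $s_2>n+1$ (rather than the endpoint $s_2=n+1$ of \eqref{eq:indices} in the $q\le1$ branch) is forced by the choice $s>1$ in the global weight $\vartheta$ is an accurate and worthwhile clarification of the theorem's hypothesis.
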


What has been done so far can be applied to Potential Sobolev spaces  $W^{s,r}$ defined in Subsection \ref{subsec:sobolev}. 

\begin{corollary}[Global Approximation for Potential Sobolev spaces]
Assume the hyphotheses of Theorem \ref{thm:approximation_sobolev_space_G} with the integer $n\in \zzp{}$ replaced by $s\in\rr{}_+$,  and the weight index $s_2$ in \eqref{eq:indices} satisfying $$s_2=\lfloor s \rfloor+2\quad\mbox{if}\quad 0<q\leq1,\quad s_2>\lfloor s \rfloor
+2+\frac{d}{q'} \quad\mbox{if}\quad q\geq1.$$ 
Then, for every $f\in M^{p,q}_m(\rr{d})$, there exists a constant $C>0$ such that
\begin{align*}
        \inf_{f_{N}\in\Sigma_{N}(\bb{D}_\Omega)}\norm{f-f_N}{W^{s,r}(\rr{d})}
        &\leq C N^{-\frac{1}{2}}
        \norm{f}{M^{p,q}_m(\rr{d})},
\end{align*}
    for all $N\in\nn{}$.
\end{corollary}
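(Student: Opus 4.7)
The strategy is to reduce the fractional-order statement to the integer-order Global Approximation Theorem~\ref{thm:approximation_sobolev_space_G} by means of a standard embedding between Bessel potential spaces. Concretely, I set $n := \lfloor s \rfloor + 1 \in \zzp{}$, the smallest positive integer strictly exceeding $s$. With this choice the weight condition on $s_2$ assumed in the present corollary becomes exactly the one demanded by \eqref{eq:indices} with integer order $n$: for $0 < q \le 1$ one needs $s_2 = n + 1 = \lfloor s \rfloor + 2$, while for $q > 1$ one needs $s_2 > n + 1 + d/q' = \lfloor s \rfloor + 2 + d/q'$. Both match the stated hypotheses verbatim, so Theorem~\ref{thm:approximation_sobolev_space_G} is directly applicable to $f$ with this integer $n$.

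Invoking Theorem~\ref{thm:approximation_sobolev_space_G} with this choice of $n$ yields, for every $f \in M^{p,q}_m(\rr{d})$, a constant $C_1 > 0$ such that for every $N \in \nn{}$,
\begin{equation*}
\inf_{f_N \in \Sigma_N(\bb{D}_\Omega)} \|f - f_N\|_{W^{n,r}(\rr{d})} \le C_1\, N^{-1/2}\, \|f\|_{M^{p,q}_m(\rr{d})}.
\end{equation*}
To pass from the integer-order Sobolev norm $W^{n,r}$ to the fractional Bessel potential norm $W^{s,r}$ on the error side, I would use the continuous embedding $W^{n,r}(\rr{d}) \hookrightarrow W^{s,r}(\rr{d})$, valid whenever $n \ge s$ and $1 < r < \infty$. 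By the definition recalled in \Cref{subsec:sobolev}, for $g \in W^{n,r}(\rr{d})$,
\begin{equation*}
\|g\|_{W^{s,r}} = \|\langle D\rangle^s g\|_{L^r} = \|\langle D\rangle^{s-n}\,\langle D\rangle^n g\|_{L^r} \le C_2\, \|\langle D\rangle^n g\|_{L^r},
\end{equation*}
since $s - n \le 0$ makes $\langle D\rangle^{s-n}$ convolution against an integrable Bessel kernel, hence bounded on $L^r(\rr{d})$ for $1 < r < \infty$. Combined with Calderón's classical identification of Bessel-potential and derivative-based Sobolev norms at integer order (applicable since $r \ge 2$), this gives $\|g\|_{W^{s,r}(\rr{d})} \le C_3\, \|g\|_{W^{n,r}(\rr{d})}$.

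Applying this embedding to $g := f - f_N$ and chaining with the integer-order bound above yields the desired estimate with $C := C_1 C_3$. The only step requiring care is the embedding itself, since one needs both the boundedness of $\langle D\rangle^{s-n}$ on $L^r$ and the Calderón equivalence to convert the derivative definition used in the proof of Theorem~\ref{thm:approximation_sobolev_space_G} into the Bessel-potential definition used here; both are standard harmonic-analytic facts in the range $1 < r < \infty$. Once these are in place, the remainder is mere bookkeeping of the weight indices, which we arranged at the outset so that the thresholds coming from \eqref{eq:indices} at integer order $n = \lfloor s\rfloor + 1$ coincide with those stated in the corollary.
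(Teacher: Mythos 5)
Your proposal is correct and follows essentially the same route as the paper: the paper's proof also takes the integer order $n=\lfloor s\rfloor+1$ (so that the stated weight conditions are exactly \eqref{eq:indices}), applies Theorem~\ref{thm:approximation_sobolev_space_G}, and concludes via the inclusion $W^{n,r}(\rr{d})\hookrightarrow W^{s,r}(\rr{d})$ for $n\geq s$. The only difference is that you spell out why this embedding holds (boundedness of $\langle D\rangle^{s-n}$ on $L^r$ and the integer-order Calder\'on identification), which the paper simply cites as a known inclusion.
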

\begin{proof}
    Using the inclusion relations:  $$W^{n,r}(\rr{d})\hookrightarrow W^{s,r}(\rr{d}),$$ for $n\geq s,$ 
the claim follows.
\end{proof}

\section{Experiments: Function Approximation with Modulation Dictionary}
\label{sec:experiments}

\paragraph{Activation Function Strategy.}
While our theoretical framework assumes $\sigma \in W^{k, \infty}(\rr{})$. We employ the standard ReLU activation in our numerical experiments.
This is justified by the fact that  a ``ramp'' or a ``tooth'' profile belong to $W^{1, \infty}(\rr{})$ (see \cref{lem:rampSobolev}) and can be exactly represented by linear combinations of $2$ or $3$ ReLU units, respectively. For instance, a bounded ramp can be decomposed as:
\[
    \sigma_{\text{ramp}}(x) = (x-b_1)_+ - (x-b_2)_+,\quad b_1, b_2 \in \rr{} \text{ such that }b_1< b_2.
\]
While the symmetric tooth function can be characterized as follows:
\[
    \sigma_{\text{tooth}}(x) = (x-b_1)_+ - 2(x-b_2)_+ +(x-b_3)_+,\quad b_1, b_2, b_3 \in \rr{},
\]
such that $b_1<b_2<b_3$ and $b_2-b_1 = b_3-b_2$.
To guarantee sufficient representational capacity, we increase the number of neurons. This ensures that, in the worst-case scenario, the network can recover the bounded activation profiles required by the theory.

We formally establish the Sobolev regularity of the ramp and the tooth profiles in the following lemma.

\begin{lemma}\label{lem:rampSobolev}
Let $x, b_1,b_2,b_3\in\rr{}$ with $b_1<b_2<b_3$, and define the ramp function
\[
\sigma_{\text{ramp}}(x) := (x-b_1)_+ - (x-b_2)_+ .
\]
as well as the symmetric tooth function
\[
 \sigma_{\text{tooth}}(x) = (x-b_1)_+ - 2(x-b_2)_+ +(x-b_3)_+, \text{ such that } b_2-b_1 = b_3-b_2.
\]
Then $\sigma_{\text{ramp}}, \sigma_{\text{tooth}}\in W^{1,\infty}(\rr{})$.
\end{lemma}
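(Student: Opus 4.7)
The plan is to exploit the characterization $W^{1,\infty}(\rr{})=\{f\in L^\infty(\rr{}):\, f \text{ is Lipschitz}\}$, or equivalently the fact that $f\in W^{1,\infty}(\rr{})$ iff $f$ is bounded and its distributional derivative is bounded. Since both $\sigma_{\text{ramp}}$ and $\sigma_{\text{tooth}}$ are finite linear combinations of translates of $(\cdot)_+$, they are continuous piecewise linear functions, so the whole statement reduces to checking boundedness of the function and of its slopes on each linear piece.

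For the ramp, I would split $\rr{}$ into the three intervals $(-\infty,b_1]$, $[b_1,b_2]$, $[b_2,\infty)$ and compute explicitly $\sigma_{\text{ramp}}(x)=0$, $x-b_1$, and $b_2-b_1$, respectively. Hence $\|\sigma_{\text{ramp}}\|_{L^\infty}=b_2-b_1$, and the distributional derivative equals $\chi_{[b_1,b_2]}$ almost everywhere, so $\|\sigma_{\text{ramp}}'\|_{L^\infty}=1$. This gives $\sigma_{\text{ramp}}\in W^{1,\infty}(\rr{})$ together with the quantitative norm bound.

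For the tooth, I would use the symmetry hypothesis $b_2-b_1=b_3-b_2=:h$ (so $b_3=2b_2-b_1$) to obtain, via a case split on the four intervals $(-\infty,b_1]$, $[b_1,b_2]$, $[b_2,b_3]$, $[b_3,\infty)$, the closed-form $\sigma_{\text{tooth}}(x)=0,\ x-b_1,\ b_3-x,\ 0$, respectively. In particular $\sigma_{\text{tooth}}$ is compactly supported in $[b_1,b_3]$ with $\|\sigma_{\text{tooth}}\|_{L^\infty}=h$, and its distributional derivative equals $\chi_{[b_1,b_2]}-\chi_{[b_2,b_3]}$ almost everywhere, which is bounded by $1$ in absolute value. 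Hence $\sigma_{\text{tooth}}\in W^{1,\infty}(\rr{})$ as claimed.

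There is no real obstacle here: the argument is a direct case analysis followed by an elementary $L^\infty$ estimate. The only subtlety is to state precisely that the distributional derivative of $(x-b)_+$ is the Heaviside function $\chi_{[b,\infty)}$, after which the two identities collapse to finite differences of characteristic functions. Because we are measuring everything in $L^\infty$, no integration is needed and the multiplicative constants coming from the activation structure are already encoded in $h=b_2-b_1$.
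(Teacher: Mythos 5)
Your proof is correct and follows essentially the same route as the paper: both compute the distributional derivative via $\tfrac{d}{dx}(x-b)_+=\mathbf 1_{(b,\infty)}$, obtain a bounded (indicator-type) derivative, and verify boundedness of the function from its explicit piecewise form. You are in fact slightly more complete, since you carry out the tooth case explicitly where the paper only remarks that the same technique applies.
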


\begin{proof}
The ReLU function $(x-b)_+$ is locally absolutely continuous and satisfies
\[
\frac{d}{dx}(x-b)_+ = \mathbf 1_{(b,\infty)}(x)
\quad \text{a.e.}
\]
By linearity of weak derivatives, we obtain
\[
\sigma_{\text{ramp}}'(x)
= \mathbf 1_{(b_1,\infty)}(x)-\mathbf 1_{(b_2,\infty)}(x)
= \mathbf 1_{(b_1,b_2)}(x)
\quad \text{a.e.}
\]
Hence $\sigma_{\text{ramp}}'\in L^\infty(\rr{})$, which implies $\sigma_{\text{ramp}}\in W^{1,\infty}(\rr{})$.  
Furthermore, $\sigma_{\text{ramp}}$ is explicitly given by
\[
\sigma_{\text{ramp}}(x)=
\begin{cases}
0, & x\le b_1,\\
x-b_1, & b_1<x<b_2,\\
b_2-b_1, & x\ge b_2,
\end{cases}
\]
and is therefore bounded, i.e. $\sigma_{\text{ramp}}\in L^\infty(\rr{})$. With a similar technique one can show that $\sigma_{\text{tooth}}\in W^{1,\infty}(\rr{})$.
\end{proof}

We introduce a novel architecture which we term the shallow modulation neural network whose units are taken from the modulation dictionary (see \cref{thm:approximation_sobolev_space_local})  
\[
\bb{D}
  = \Bigl\{
      x \mapsto
      \sigma\!\left(\tfrac{\eta\cdot x}{\tau}+b\right)\,
      \varphi\!\left(\tfrac{\eta \cdot  x}{\tau}+b-t\right)\,
      \phi(x-y)
      \;\Big|\;
      (y,\eta,b)\in\rr{d}\times\rr{d}\times\rr{}
    \Bigr\},
\]
where the constants $\tau, t\neq 0$, $\sigma$  is the $\operatorname{ReLU}$ activation function, so that \textnormal{\textbf{Condition (A)}} is satisfied, cf. Lemma \ref{lem:31} in the Appendix below.  Furthermore, $\varphi,\phi$ are Gaussian windows that provide localization both along the one-dimensional response $\tfrac{\eta\!\cdot\! x}{\tau}+b$ and in the input domain. 
Let $\tau, t \neq 0$.  
For each $x \in \rr{d}$, we define the \emph{modulation atom}
\[
\phi_k(x)
  = \operatorname{ReLU}\!\left(\tfrac{\eta_k \!\cdot\! x}{\tau} + b_k\right)
    \exp\!\Bigl[-\tfrac{1}{2}\Bigl(\tfrac{\eta_k \!\cdot\! x}{\tau} + b_k - t\Bigr)^{2}\Bigr]
    \exp\!\Bigl[-\tfrac{1}{2}\|x - y_k\|_{2}^{2}\Bigr],
  \qquad k \in \nn{}.
\]
The associated network output with $N$ hidden units is then given by
\[
f_{N}(x)
  = \sum_{k=1}^{N} a_k\, \phi_k(x) + c,
  \quad \text{ such that }a_k, c \in \rr{} \text{ where }k\in \{1, \dots, N\}.
\]
This architecture can be interpreted as a shallow neural network whose activation functions
are atoms drawn from the modulation dictionary~$\bb{D}$.

To provide a benchmark, we also consider a plain (vanilla) shallow ReLU network of comparable complexity,
\[
p_{M}(x)
  = \sum_{k=1}^{M} \zeta_k\,
      \operatorname{ReLU}\!\bigl(\omega_k \!\cdot\! x + m_k\bigr)
    + z,
  \quad \text{ such that }\zeta_k, z\in \rr{} \text{ where }k\in \{1, \dots, M\}.
\]

We approximate the target function $f(x)=e^{-x^{2}}\sin(3x)$ in one dimension
as well as a similar two–dimensional extensions $F(x,y) = e^{-(x^{2} + y^{2})}\sin(x + y)$. Each simulation is trained for $100k$ epochs
using both the Adam optimizer (without learning–rate scheduling)
and AdamW equipped with a ReduceLROnPlateau scheduler,
with the following parameters in the one-dimensional and  the two-dimensional cases
\begin{align*}
&\text{factor}=0.9,\qquad
&\text{patience}=100,\qquad
&\text{cooldown}=200,\qquad
&\text{min\_lr}=10^{-8},
\\
&\text{factor}=0.9,\qquad
&\text{patience}=50,\qquad
&\text{cooldown}=100,\qquad
&\text{min\_lr}=10^{-8},
\end{align*}
respectively.

To ensure robustness and reproducibility, each experiment is repeated using ten different random seeds.  
These seeds influence both the data generation process of $10k$ samples and the initialization of the network parameters in the one-dimensional experiments, whereas in the two-dimensional setting only the weight initialization is randomized.  

In the one-dimensional case, the modulation network is implemented with $300$ hidden neurons, while the plain ReLU network employs $400$ neurons so that both architectures contain the same total number of trainable parameters ($1201$).  
For the two-dimensional experiments, the number of hidden units in the plain network is increased to $450$, ensuring again an equal total parameter count ($1801$) between the two architectures.

We compare the two networks in terms of their $H^{1}$–approximation accuracy.
Across the considered benchmarks, Across all benchmarks, the modulation network consistently outperforms the plain network, demonstrating superior convergence in Sobolev norms during training (see \cref{fig:1d_loos_vs_epochs}) and enhanced generalization on unseen data compared to the plain network (see \cref{fig:1d_prediction,fig:2d_prediction,fig:2d_prediction_dx,fig:2d_prediction_dy}), at the cost of a moderately increased runtime.

\begin{figure}[H]
  \centering
  \begin{subcaptiongroup}
    \begin{subfigure}{0.4\textwidth}
      \centering
      \includegraphics[width=\linewidth]{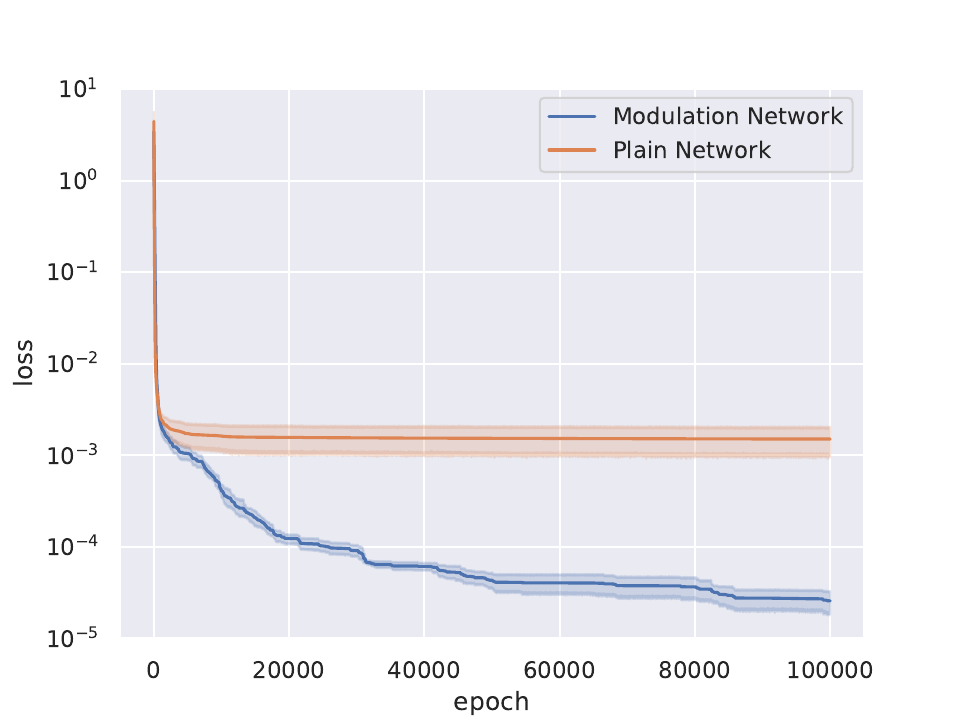}
      \caption{Adam (no scheduler).}
    \end{subfigure}\hfill
    \begin{subfigure}{0.4\textwidth}
      \centering
      \includegraphics[width=\linewidth]{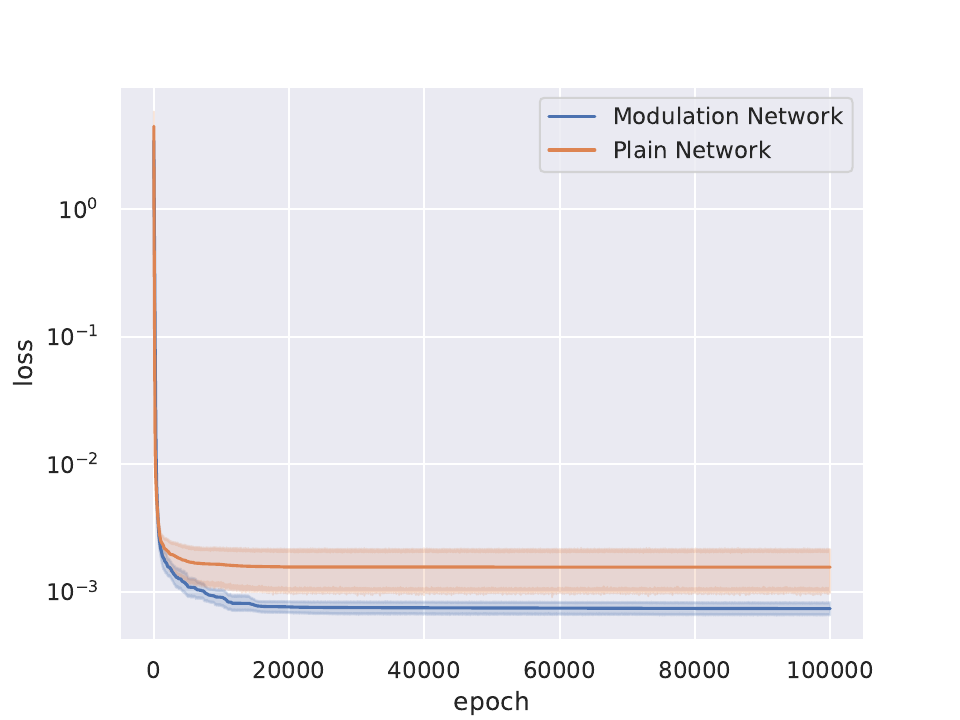}
      \caption{AdamW with ReduceLROnPlateau.}
    \end{subfigure}
  \end{subcaptiongroup}
  \caption{Training loss over epochs for the modulation and plain ReLU networks (1201 parameters each). Curves show the median over 10 seeds with variability bands.}
  \label{fig:1d_loos_vs_epochs}
\end{figure}

\begin{figure}[H]
  \centering
  \begin{subfigure}{0.24\textwidth}
    \centering
    \includegraphics[width=\linewidth]{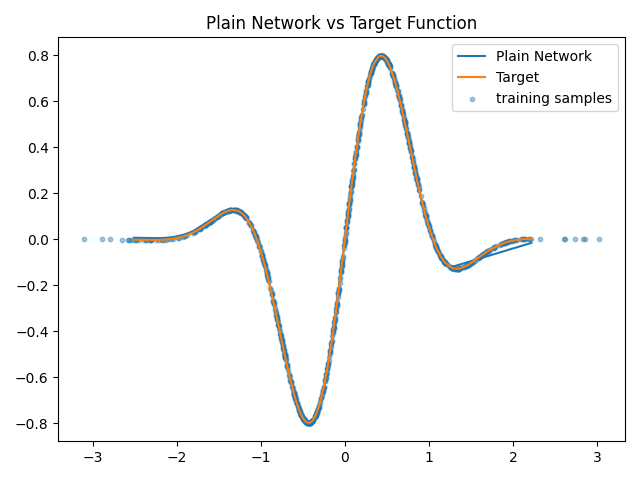}
  \end{subfigure}\hfill
  \begin{subfigure}{0.24\textwidth}
    \centering
    \includegraphics[width=\linewidth]{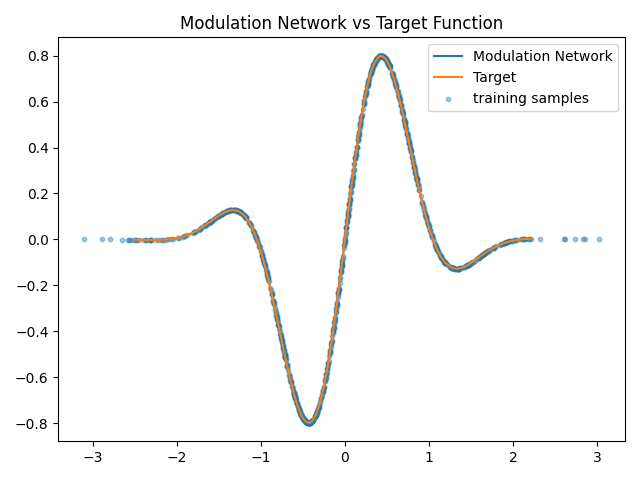}
  \end{subfigure}
  \begin{subfigure}{0.24\textwidth}
    \centering
    \includegraphics[width=\linewidth]{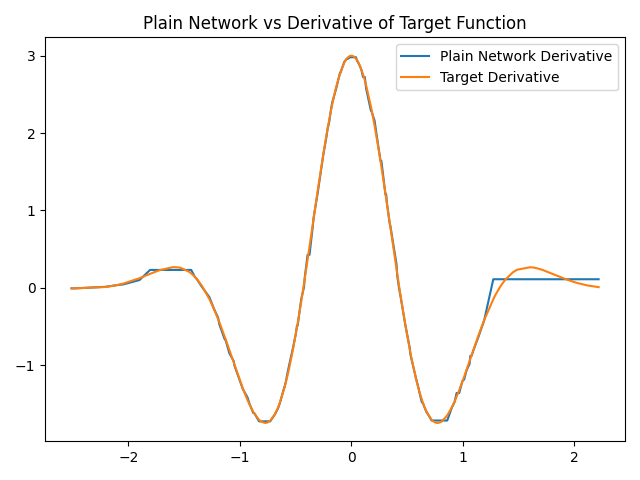}
  \end{subfigure}\hfill
  \begin{subfigure}{0.24\textwidth}
    \centering
    \includegraphics[width=\linewidth]{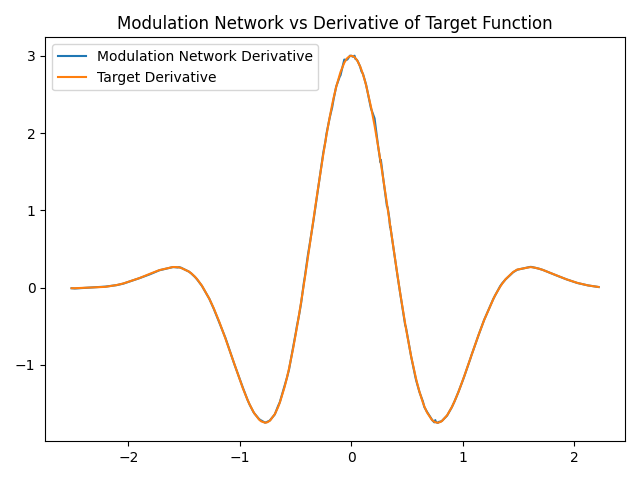}
  \end{subfigure}
  \caption{Comparison of plain and modulation model predictions on unseen one-dimensional data using Adam optimizer. The top row displays the predicted values of the target function $e^{-x^{2}}\sin(3x)$, whereas the bottom row displays the predicted values of its derivative.}
  \label{fig:1d_prediction}
\end{figure}

\begin{figure}[H]
  \centering
  \begin{subcaptiongroup}
    \begin{subfigure}{0.4\textwidth}
      \centering
      \includegraphics[width=\linewidth]{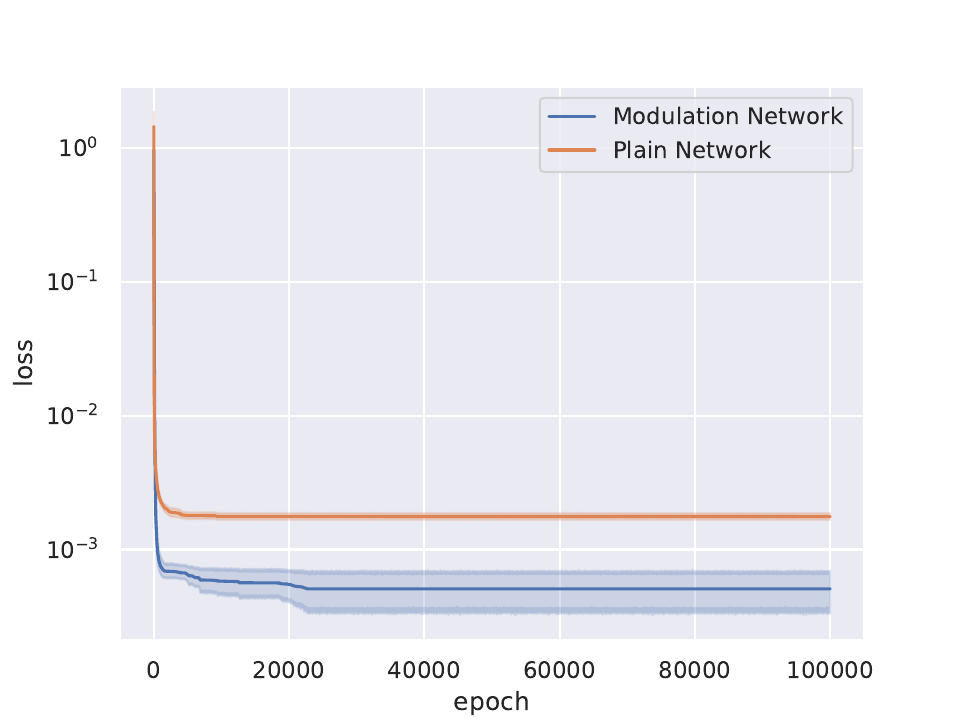}
      \caption{Adam (no scheduler).}
    \end{subfigure}\hfill
    \begin{subfigure}{0.4\textwidth}
      \centering
      \includegraphics[width=\linewidth]{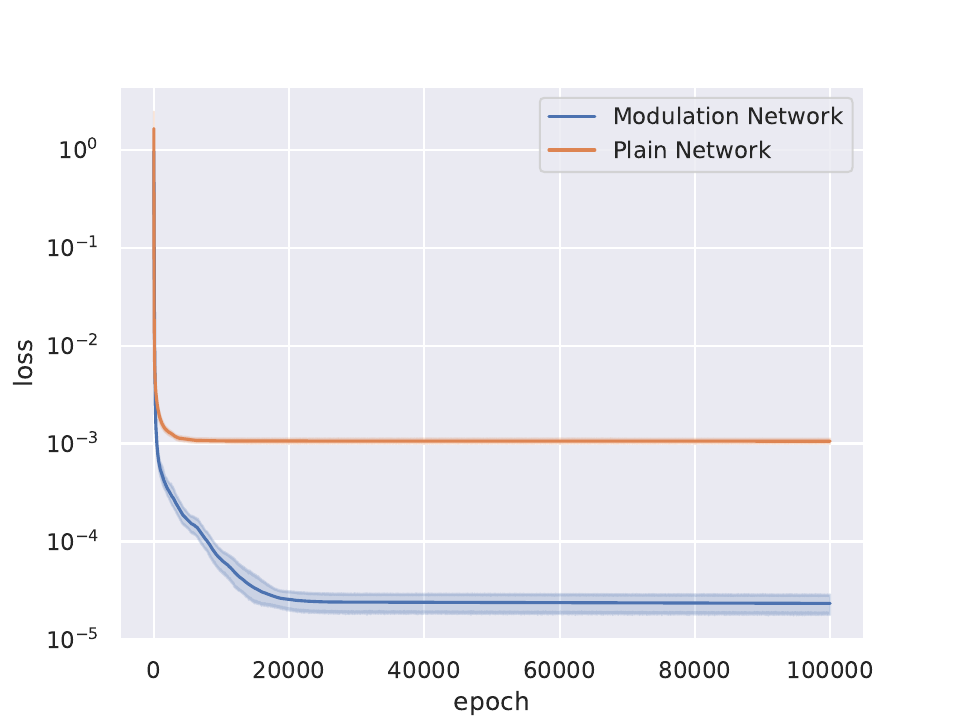}
      \caption{AdamW with ReduceLROnPlateau.}
    \end{subfigure}
  \end{subcaptiongroup}
  \caption{Training loss over epochs for the modulation and plain ReLU networks (1801 parameters each). Curves show the median over 10 seeds with variability bands.}
  \label{fig:2d_loos_vs_epochs}
\end{figure}

\begin{figure}[H]
  \centering
  \begin{subfigure}{0.4\textwidth}
      \centering
      \includegraphics[width=\linewidth]{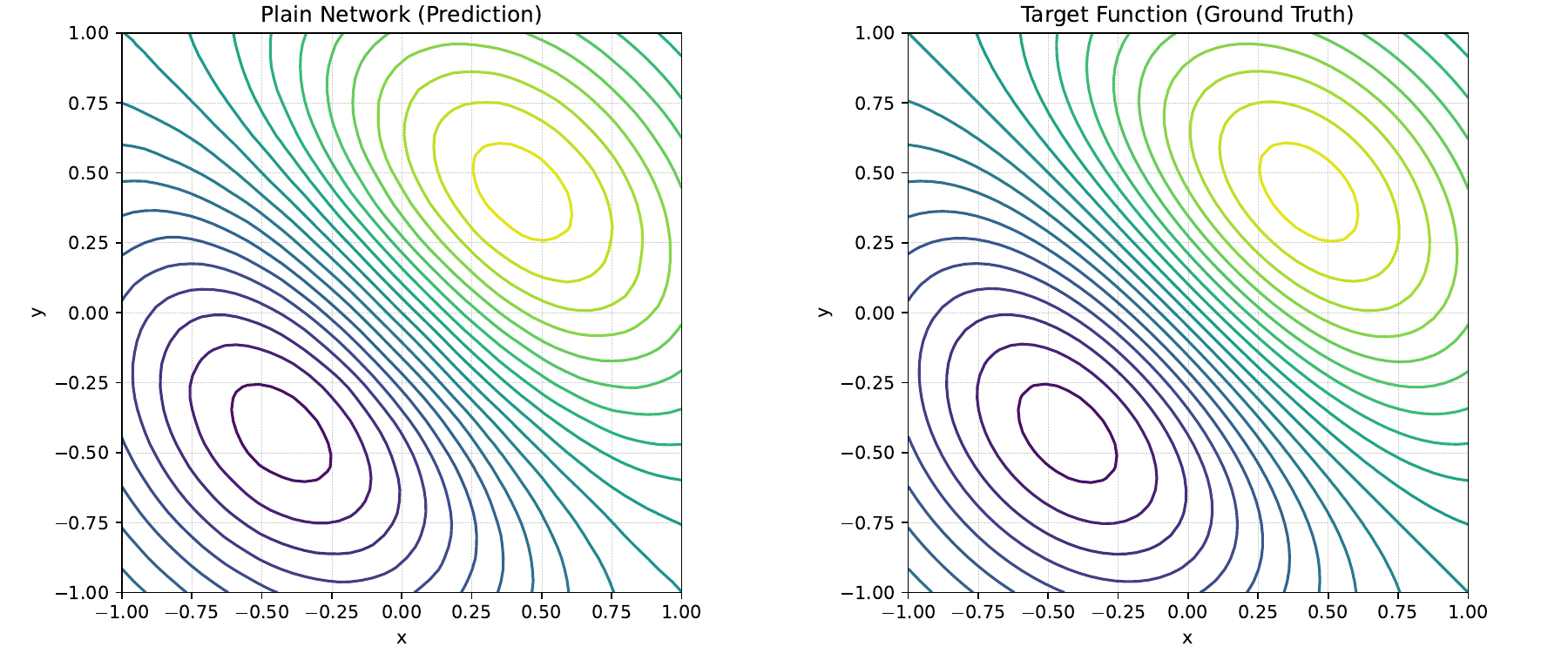}
  \end{subfigure}
  \begin{subfigure}{0.4\textwidth}
      \centering
      \includegraphics[width=\linewidth]{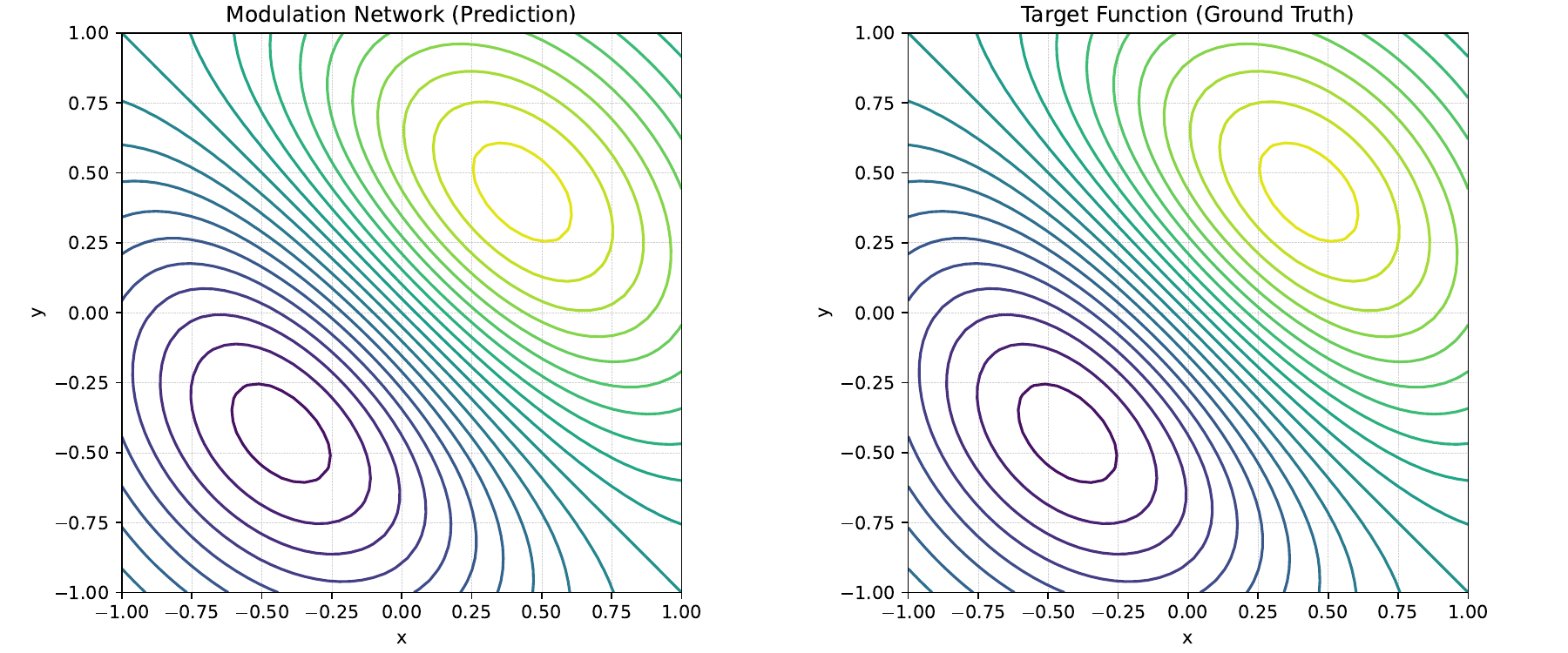}
  \end{subfigure}
    \caption{Comparison of plain and modulation model predictions on unseen two-dimensional data using AdamW optimizer with scheduler, when predicting $F(x, y)$.}
  \label{fig:2d_prediction}
\end{figure}

\begin{figure}[H]
  \centering
    \begin{subfigure}{0.4\textwidth}
      \centering
      \includegraphics[width=\linewidth]{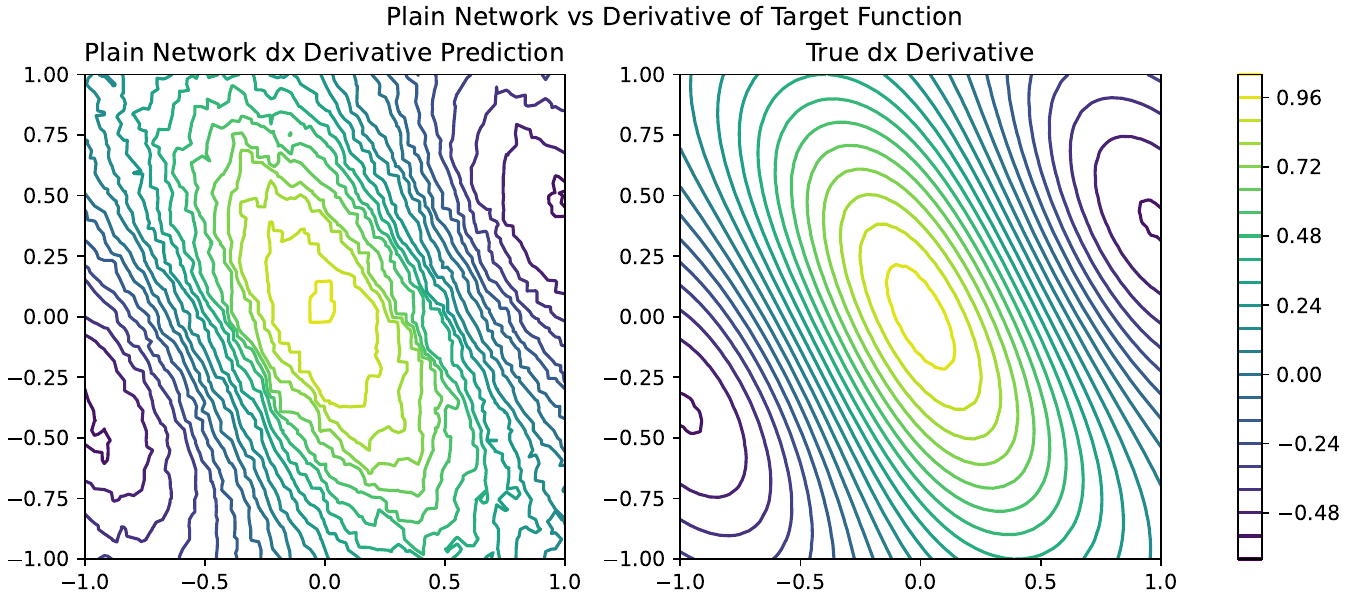}
  \end{subfigure}
    \begin{subfigure}{0.4\textwidth}
      \centering
      \includegraphics[width=\linewidth]{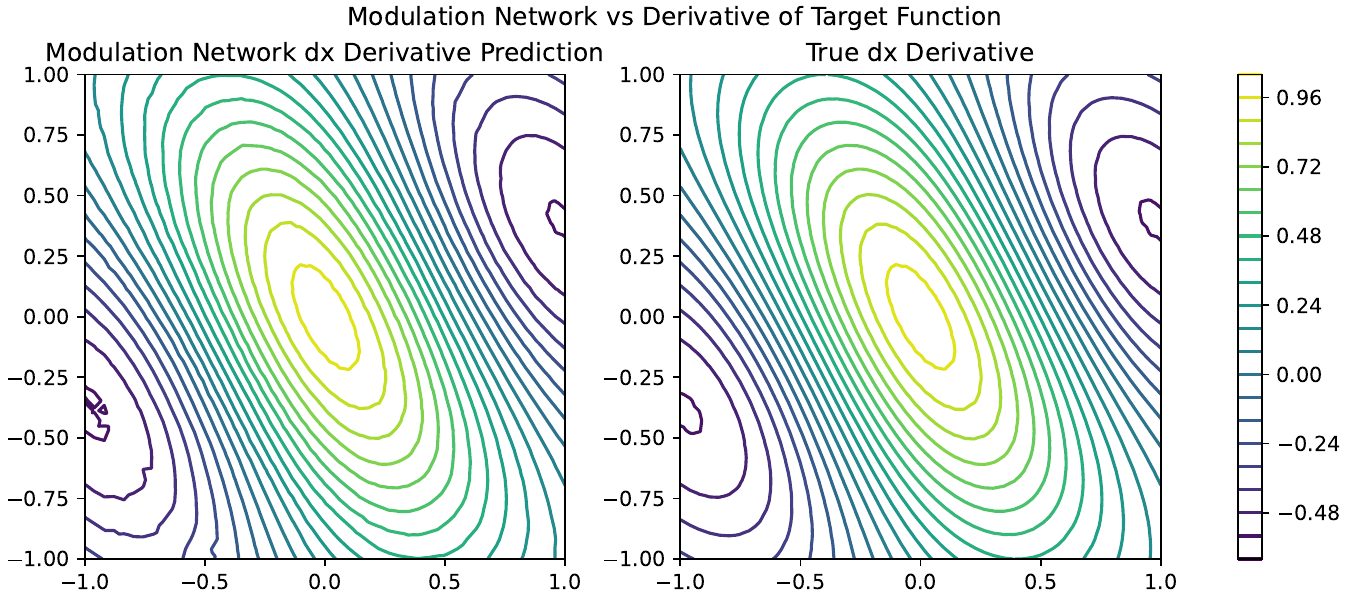}
  \end{subfigure}
    \caption{Comparison of plain and modulation model predictions on unseen two-dimensional data using AdamW optimizer with scheduler when predicting $\partial_xF(x,y)$.}
  \label{fig:2d_prediction_dx}
\end{figure}

  \begin{figure}[H]
  \centering
  \begin{subfigure}{0.4\textwidth}
      \centering
      \includegraphics[width=\linewidth]{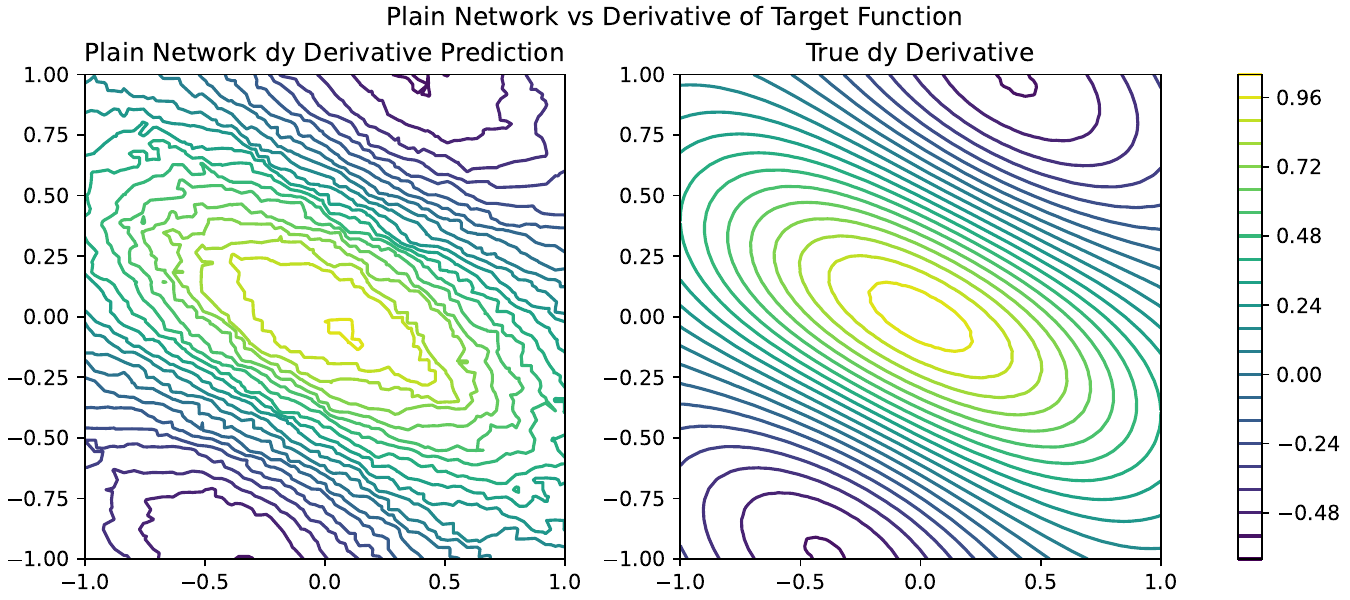}
  \end{subfigure}
  \begin{subfigure}{0.4\textwidth}
      \centering
      \includegraphics[width=\linewidth]{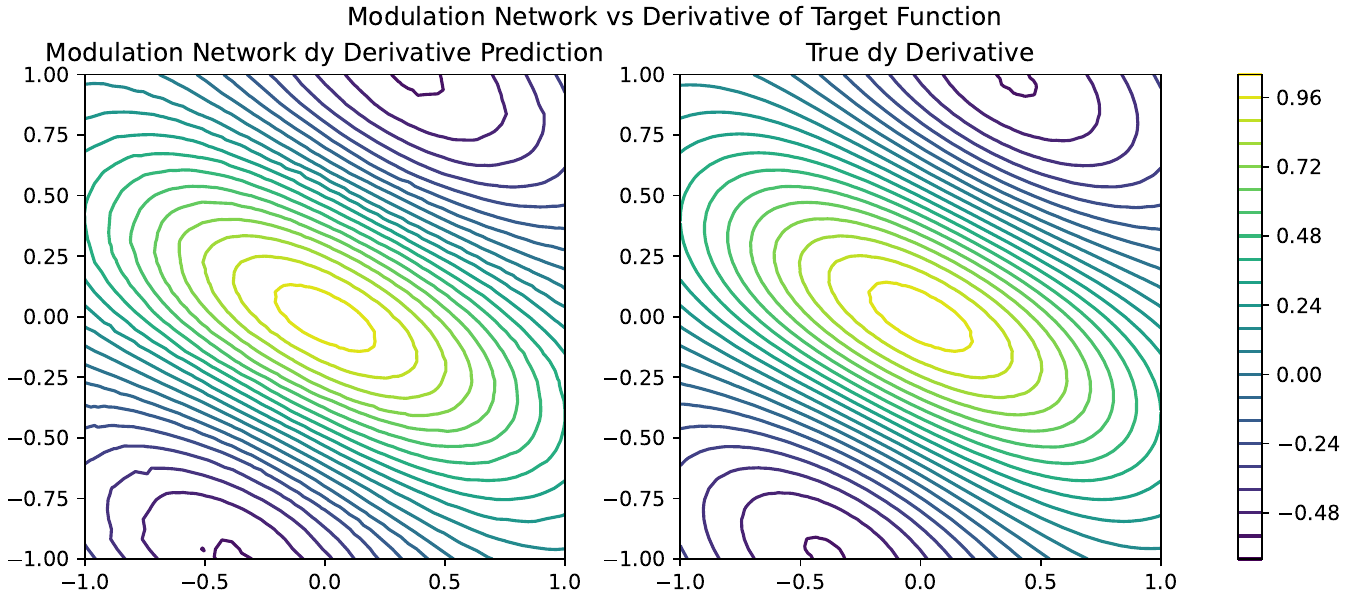}
  \end{subfigure}
  \caption{Comparison of plain and modulation model predictions on unseen one-dimensional data using AdamW optimizer with scheduler when predicting $\partial_yF(x,y)$.}
  \label{fig:2d_prediction_dy}
\end{figure}

\begin{figure}[H]
  \centering
    \includegraphics[width=.7\linewidth]{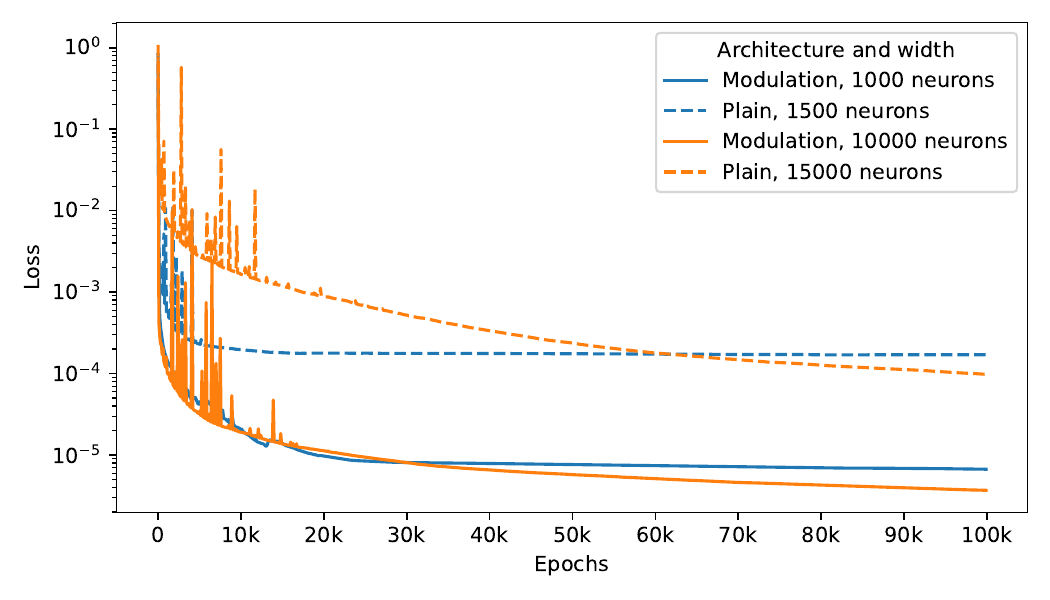}
  \caption{Loss versus epochs for the approximation of $F(x,y)$ using modulation and plain networks with different hidden neurons.}
  \label{fig:2d_prediction_expressivity}
\end{figure}
\cref{fig:2d_prediction_expressivity} shows that despite the $1.5\times$ larger width, the plain architecture consistently converges more slowly and attains a higher loss than the modulation network, demonstrating the superior approximation efficiency of the modulation architecture.

\appendix
\renewcommand{\theequation}{\thesection.\arabic{equation}}
\refstepcounter{section}
\setcounter{equation}{0} 

\section*{Appendix A. Short-Time Fourier Transform of the ReLU Activation}

In this appendix we compute explicitly the STFT
of the rectified linear unit
\[
\sigma(t)=t_+ = \max\{0,t\},
\]
with respect to the Gaussian window
\[
\varphi(t)=e^{-\pi t^2}.
\]
Throughout we use the convention
\[
V_\varphi f(x,\omega)
= \int_{\rr{}} f(t)\,\varphi(t-x)\,e^{-2\pi i \omega t}\,dt.
\]

\begin{theorem}[Explicit STFT of the ReLU]\label{thm:relu-stft}
Let $\sigma(t)=t_+=\max\{0,t\}$ and $\varphi(t)=e^{-\pi t^2}$.  
Then for all $(x,\omega)\in\rr{2}$,
\begin{equation}\label{eq:relu-stftE}
V_\varphi \sigma(x,\omega)
= 
\frac12\, e^{-\pi\omega^2}(x - i\omega)\,
e^{-2\pi i\omega x}\,
\operatorname{erfc}\!\bigl(\sqrt{\pi}(-x+i\omega)\bigr)
\;+\;
\frac{1}{2\pi}\, e^{-\pi x^2}.
\end{equation}
Here $\operatorname{erfc}(z)$ denotes the complementary error function
extended to $z\in\cc{}$.
\end{theorem}

\begin{proof}
Since $\sigma(t)=0$ for $t<0$,
\[
V_\varphi \sigma(x,\omega)
= \int_{0}^{\infty} t\,e^{-\pi (t-x)^2} e^{-2\pi i\omega t}\,dt.
\]
Set $s=t-x$, i.e.\ $t=s+x$ and $dt=ds$, so that the lower limit becomes $s=-x$:
\[
V_\varphi \sigma(x,\omega)
= e^{-2\pi i\omega x}
   \int_{-x}^{\infty} (s+x)\,e^{-\pi s^2} e^{-2\pi i\omega s}\,ds.
\]
Define
\begin{equation}
I_0(x,\omega)=\int_{-x}^{\infty} e^{-\pi s^2}e^{-2\pi i\omega s}\,ds,
\qquad
I_1(x,\omega)=\int_{-x}^{\infty} s\,e^{-\pi s^2}e^{-2\pi i\omega s}\,ds.
\end{equation}\label{eq:relu-stft}
Then
\begin{equation}\label{eq:relu-stft-decomp}
V_\varphi \sigma(x,\omega)
= e^{-2\pi i\omega x}\bigl(xI_0(x,\omega) + I_1(x,\omega)\bigr).
\end{equation}

\medskip
\noindent\textit{Step 1: Computation of $I_0$.}
Complete the square:
\[
-\pi s^2 - 2\pi i\omega s
= -\pi(s+i\omega)^2 - \pi\omega^2.
\]
Hence
\[
I_0(x,\omega)
= e^{-\pi\omega^2}
  \int_{-x}^{\infty} e^{-\pi(s+i\omega)^2}\,ds.
\]
Let $u=\sqrt{\pi}(s+i\omega)$; then $ds=du/\sqrt{\pi}$.
Using
\[
\int_{z}^{\infty} e^{-u^2}\,du
= \frac{\sqrt{\pi}}{2}\operatorname{erfc}(z),
\]
we obtain
\begin{equation}\label{eq:I0}
I_0(x,\omega)
= \frac12\,e^{-\pi\omega^2}\,
   \operatorname{erfc}\!\bigl(\sqrt{\pi}(-x+i\omega)\bigr).
\end{equation}
Write $z:=\sqrt{\pi}(-x+i\omega)$ for brevity.

\medskip
\noindent\textit{Step 2: Computation of $I_1$.}
Differentiate the integrand:
\[
\frac{\partial}{\partial\omega}\!
\left(e^{-\pi s^2} e^{-2\pi i\omega s}\right)
= -2\pi i s\,e^{-\pi s^2}e^{-2\pi i\omega s}.
\]
Thus
\[
\frac{\partial I_0}{\partial\omega}(x,\omega)
= -2\pi i\,I_1(x,\omega),
\qquad
I_1(x,\omega)
= -\frac{1}{2\pi i}\,\frac{\partial I_0}{\partial\omega}.
\]
Differentiating \eqref{eq:I0} and using 
$\operatorname{erfc}'(z)=-\frac{2}{\sqrt{\pi}}e^{-z^2}$ and 
$z'(\omega)=\sqrt{\pi}i$, we find
\begin{equation}\label{eq:dI0}
\frac{\partial I_0}{\partial\omega}
= e^{-\pi\omega^2}\bigl[-\pi\omega\,\operatorname{erfc}(z)
                         - i\,e^{-z^2}\bigr].
\end{equation}
Substituting \eqref{eq:dI0} into the relation for $I_1$ yields
\[
I_1(x,\omega)
= -\frac{i\omega}{2}e^{-\pi\omega^2}\operatorname{erfc}(z)
  + \frac{1}{2\pi}e^{-\pi x^2}e^{2\pi i x\omega}.
\]

\medskip
\noindent\textit{Step 3: Reconstruction of the STFT.}
From \eqref{eq:I0} and the expression for $I_1$,
\[
xI_0+I_1
= \frac12 e^{-\pi\omega^2}(x-i\omega)\operatorname{erfc}(z)
  + \frac{1}{2\pi}e^{-\pi x^2}e^{2\pi i x\omega}.
\]
Multiplying by $e^{-2\pi i\omega x}$ as in 
\eqref{eq:relu-stft-decomp} proves formula \eqref{eq:relu-stftE}.
\end{proof}
\begin{lemma}[Value at the Origin]\label{lem:31}
For $\sigma(t)=t_+$ and $\varphi(t)=e^{-\pi t^2}$,
\[
V_\varphi \sigma(0,0)
= \int_{0}^{\infty} t\,e^{-\pi t^2}\,dt
= \frac{1}{2\pi}.
\]
\end{lemma}

\begin{proof}
Since $\sigma(t)=0$ for $t<0$,
\[
V_\varphi \sigma(0,0)
= \int_{0}^{\infty} t\,e^{-\pi t^2}\,dt.
\]
With $u=\pi t^2$ (so $t\,dt = du/(2\pi)$) we obtain
\[
\int_0^\infty t\,e^{-\pi t^2}\,dt
= \frac{1}{2\pi}\int_0^\infty e^{-u}\,du
= \frac{1}{2\pi}.
\]
\end{proof}

\begin{corollary}[Decay Estimates]\label{cor:relu-decay}
Let $\sigma(t)=t_+$ and $\varphi(t)=e^{-\pi t^2}$.  
Then for all $(x,\omega)\in\rr{2}$,
\begin{equation*}
|V_\varphi \sigma(x,\omega)|
\,\le\,
C\,(1+|x|+|\omega|)\,e^{-\pi(x^2+\omega^2)}
\;+\;
\frac{1}{2\pi} e^{-\pi x^2},
\end{equation*}
for some constant $C>0$.
\end{corollary}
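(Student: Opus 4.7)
The plan is to read off the bound directly from the closed-form expression in \Cref{thm:relu-stft}. Write $V_\varphi\sigma(x,\omega) = T_1(x,\omega) + T_2(x,\omega)$ with
\begin{equation*}
T_1(x,\omega) = \tfrac{1}{2}e^{-\pi\omega^2}(x-i\omega)e^{-2\pi i\omega x}\operatorname{erfc}\bigl(\sqrt{\pi}(-x+i\omega)\bigr),\qquad T_2(x,\omega) = \tfrac{1}{2\pi}e^{-\pi x^2}.
\end{equation*}
Since $|T_2(x,\omega)|$ produces the second summand of the claimed bound exactly, the entire burden of the proof falls on estimating $|T_1(x,\omega)|$ by a constant multiple of $(1+|x|+|\omega|)e^{-\pi(x^2+\omega^2)}$.

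The elementary factors in $T_1$ are immediate: $|e^{-2\pi i\omega x}|=1$, $|x-i\omega| = \sqrt{x^2+\omega^2} \le |x|+|\omega|$, and $|e^{-\pi\omega^2}|=e^{-\pi\omega^2}$. The delicate piece is the estimate of $|\operatorname{erfc}(z)|$ at the complex argument $z=\sqrt{\pi}(-x+i\omega)$, which I would split according to the sign of $\mathrm{Re}(z) = -\sqrt{\pi}x$. For $\mathrm{Re}(w)\ge 0$, the substitution $t=w+s$ in the defining integral gives $\operatorname{erfc}(w) = (2/\sqrt{\pi})e^{-w^2}\int_0^\infty e^{-2ws-s^2}\,ds$, and the trivial majorization $\int_0^\infty e^{-2\mathrm{Re}(w)s-s^2}\,ds \le \sqrt{\pi}/2$ yields $|\operatorname{erfc}(w)| \le |e^{-w^2}|$. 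For $\mathrm{Re}(w)<0$ I use the reflection identity $\operatorname{erfc}(w) = 2 - \operatorname{erfc}(-w)$ and apply the previous estimate to $-w$, at the cost of an additive constant $2$.

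With $w=\sqrt{\pi}(-x+i\omega)$ a direct computation gives $w^2 = \pi(x^2-\omega^2) - 2\pi i x\omega$, hence $|e^{-w^2}| = e^{-\pi x^2+\pi\omega^2}$. When multiplied by the prefactor $\tfrac{1}{2}(|x|+|\omega|)e^{-\pi\omega^2}$ sitting in front, the spurious $e^{\pi\omega^2}$ cancels exactly, producing the core bound $\tfrac{1}{2}(|x|+|\omega|)e^{-\pi x^2}$. The reflection case contributes an additional $\tfrac{1}{2}(|x|+|\omega|)e^{-\pi\omega^2}$ piece from the constant $2$. Combining these two contributions and using $|x|+|\omega| \le 1+|x|+|\omega|$ then yields a control of the form $C(1+|x|+|\omega|)e^{-\pi(x^2+\omega^2)}$, up to bookkeeping of constants.

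The main obstacle will be producing the \emph{joint} Gaussian decay $e^{-\pi(x^2+\omega^2)}$ rather than the weaker sum $e^{-\pi x^2}+e^{-\pi\omega^2}$ that the naive estimate gives. To promote the decay to the product form one likely needs a sharper complex erfc bound, for instance $|\operatorname{erfc}(w)| \lesssim |e^{-w^2}|/(1+|\mathrm{Re}(w)|)$, obtainable from the refinement $\int_0^\infty e^{-2\mathrm{Re}(w)s-s^2}\,ds \le \min\bigl\{\sqrt{\pi}/2,\,1/(2\mathrm{Re}(w))\bigr\}$, or from interpolating with the classical asymptotic $\operatorname{erfc}(w)\sim e^{-w^2}/(w\sqrt{\pi})$ valid as $|w|\to\infty$ in the right half-plane. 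Once this refinement is in place, the remaining manipulations are routine and the absolute constant $C$ can be tracked explicitly.
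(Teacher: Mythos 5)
Your decomposition and your two erfc estimates are correct as far as they go, but the final step --- ``combining these two contributions \dots yields a control of the form $C(1+|x|+|\omega|)e^{-\pi(x^2+\omega^2)}$'' --- is precisely where the argument breaks, and no refinement of the right-half-plane erfc bound can repair it. The problematic term is the one you yourself generate from the reflection formula when $\mathrm{Re}(w)=-\sqrt{\pi}x<0$, i.e.\ when $x>0$: there $\operatorname{erfc}(w)=2-\operatorname{erfc}(-w)$, and the constant $2$ produces the contribution $|x-i\omega|\,e^{-\pi\omega^2}\le(|x|+|\omega|)e^{-\pi\omega^2}$, which has no decay in $x$ whatsoever and cannot be absorbed into $C(1+|x|+|\omega|)e^{-\pi(x^2+\omega^2)}$. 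Your proposed fix, $|\operatorname{erfc}(w)|\lesssim |e^{-w^2}|/(1+|\mathrm{Re}(w)|)$, only sharpens the regime $\mathrm{Re}(w)\ge 0$ (i.e.\ $x\le 0$), which was never the obstruction; in the left half-plane $\operatorname{erfc}$ tends to $2$, so the offending term is genuinely of size $|x|\,e^{-\pi\omega^2}$ and is sharp.

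In fact the statement you were asked to prove is false, so the gap cannot be closed: at $\omega=0$ the exact formula of Theorem~\ref{thm:relu-stft} gives $V_\varphi\sigma(x,0)=\tfrac{x}{2}\operatorname{erfc}(-\sqrt{\pi}x)+\tfrac{1}{2\pi}e^{-\pi x^2}\sim x$ as $x\to+\infty$ (equivalently, $\int_0^\infty t\,e^{-\pi(t-x)^2}\,dt\sim x$, since the ReLU grows linearly where the window is centered), whereas the claimed right-hand side tends to $0$. The paper's own proof does not fare better: it invokes the ``classical'' inequality $|\operatorname{erfc}(z)|\le C\,e^{-|z|^2}/(1+|z|)$ for all $z\in\cc{}$, which is false --- $\operatorname{erfc}(z)\to 2$ on the negative real axis and $|\operatorname{erfc}(iy)|\sim e^{y^2}/(\sqrt{\pi}|y|)$ grows on the imaginary axis; the valid version (with $|e^{-z^2}|$, and only for $\mathrm{Re}\,z\ge 0$) reproduces exactly your estimates. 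So your case-split is the right method, and what it actually proves is the corrected bound
$|V_\varphi\sigma(x,\omega)|\le C(1+|x|+|\omega|)\bigl(e^{-\pi x^2}+e^{-\pi\omega^2}\,\mathbf 1_{\{x>0\}}\bigr)$,
with Gaussian decay in $x$ only on the half-line $x\le 0$; the corollary (and the downstream claim in Remark~\ref{rem:33}) should be restated along these lines rather than proved as written.
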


\begin{proof}
We call the  first term in \eqref{eq:relu-stftE} 
\[
T_1(x,\omega)
:=
\frac12 e^{-\pi\omega^2}(x-i\omega)
e^{-2\pi i\omega x}
\operatorname{erfc}(z),
\qquad z=\sqrt{\pi}(-x+i\omega).
\]
Using standard bounds on \( |\operatorname{erfc}(z)| \) in the complex plane together with the Gaussian decay, it follows that
\[
\bigl|T_1(x,\omega)\bigr|
\le
C\,(1+|x|+|\omega|)\,e^{-\pi(x^2+\omega^2)}.
\]
The second term in \eqref{eq:relu-stftE} is
$\frac{1}{2\pi}e^{-\pi x^2}$, completing the proof.
\end{proof}

\begin{remark}\label{rem:33}
The estimates above imply that the STFT is in $L^{1,\infty}_{v_s\otimes 1}(\rr{2})$ which means $\sigma\in M^{1,\infty}_{v_s\otimes 1}(\rr{})$, for every $s\in\rr{}$.
These properties justify the use of $\sigma$ 
within the analytic framework of
time-frequency localization.
\end{remark}

\section*{Acknowledgments}

The authors thank the \textit{Erwin Schr\"odinger International Institute for Mathematics and Physics (ESI)},  University of Vienna. This work began during their ESI stay from May 5 to 9, 2025.\par
\noindent The authors would like to thank Dr. Thomas Dittrich for helpful discussions regarding the numerical implementation.
E.~Cordero has been partially supported by the Italian Ministry of the University and Research - MUR, 
within the framework of the Call relating to the scrolling of the final rankings of the PRIN 2022 - Project Code
2022HCLAZ8, CUP D53C24003370006 (PI A. Palmieri, Local unit Sc. Resp. S. Coriasco).
A.~Abdeljawad acknowledges the support of the Austrian Science Fund (FWF) through project PAT4788625 (Grant-DOI: 10.55776/PAT4788625).

\printbibliography

\end{document}